\renewcommand*{\backref}[1]{\ifx#1\relax \else Page #1 \fi}
\renewcommand*{\backrefalt}[4]{%
  \ifcase #1 \footnotesize{(Not cited.)}%
  \or        \footnotesize{(Cited on page~#2.)}%
  \else      \footnotesize{(Cited on pages~#2.)}%
  \fi
}
\newcommand{\lv}{\left\lVert}
\newcommand{\rv}{\right\rVert}
\newcommand{\mb}{\mathbb}
\newcommand{\mc}{\mathcal}
\newcommand{\cwpi}{C_{\text{WPI}}}
\newcommand{\TX}{\Tilde{X}}
\newcommand{\fl}[1]{\lfloor #1 \rfloor}
\newtheorem{theorem}{Theorem}
\newtheorem{lemma}{Lemma}
\newtheorem{remark}{Remark}
\newtheorem{corollary}{Corollary}
\newtheorem{definition}{Definition}
\newtheorem{proposition}{Proposition}
\newtheorem{assump}{Assumption}
\newenvironment{myassump}[2][]
  {\begin{assump}[#1]}
  {\end{assump}}
\author{Ye He\thanks{Department of Mathematics, University of California, Davis. \texttt{leohe@ucdavis.edu} }
\and Tyler Farghly\thanks{Department of Statistics, University of Oxford, UK. \texttt{farghly@stats.ox.ac.uk}}
\and Krishnakumar Balasubramanian\thanks{Department of Statistics, University of California, Davis. \texttt{kbala@ucdavis.edu}} 
\and Murat A. Erdogdu\thanks{Department of Computer Science and Department of Statistical Sciences, University of Toronto. \texttt{erdogdu@cs.toronto.edu} }
}
\begin{document}
\title{Mean-Square Analysis of Discretized It\^o Diffusions \\
for Heavy-tailed Sampling}

\maketitle

\begin{abstract}
We analyze the complexity of sampling from a class of heavy-tailed distributions by discretizing a natural class of It\^o diffusions associated with weighted Poincar\'e inequalities. Based on a mean-square analysis, we establish the iteration complexity for obtaining a sample whose  distribution is $\epsilon$ close to the target distribution in the Wasserstein-2 metric. In this paper, our results take the mean-square analysis to its limits, i.e., we invariably only require that the target density has finite variance, the minimal requirement for a mean-square analysis. To obtain explicit estimates, we compute upper bounds on certain moments associated with heavy-tailed targets under various assumptions. We also provide similar iteration complexity results for the case where only function evaluations of the unnormalized target density are available by estimating the gradients using a Gaussian smoothing technique. We provide illustrative examples based on the multivariate $t$-distribution.

\end{abstract}

\section{Introduction}
The problem of sampling from a given target density $\pi: \mathbb{R}^d \to \mathbb{R}$ arises in a wide variety of problems in statistics, machine learning, operations research and applied mathematics. Markov chain Monte Carlo (MCMC) algorithms are a popular class of algorithms for sampling~\citep{robert1999monte,andrieu2003introduction, hairer2006geometric,brooks2011handbook,meyn2012markov, leimkuhler2016molecular,douc2018markov}; a widely used approach in this domain is to discretize an It\^o diffusion that has the target as its stationary density. A popular choice of diffusion is the overdamped Langevin diffusion,
\begin{align}\label{LDy}
    dX_t=\nabla \log \pi(X_t)  dt +\sqrt{2} d B_t,
\end{align}
where $B_t$ is a $d$-dimensional Brownian motion. For example, the Unadjusted Langevin Algorithm~\citep{rossky1978brownian}, the Metropolis Adjusted Langevin Algorithm~\citep{roberts1996exponential,roberts1998optimal} and the proximal sampler~\citep{titsias2018auxiliary,lee2021structured,vono2022efficient} arise as different discretizations of~\eqref{LDy}. Under light-tailed assumptions, i.e. when the density $\pi$ has exponentially fast decaying tails, the diffusion $X_t$ in~\eqref{LDy} converges exponentially fast to $\pi$ as its stationary density, which motivates the use of discretizations of~\eqref{LDy} as practical algorithms for sampling. In the last decade, the non-asymptotic iteration complexity of various discretizations have been well-explored, thereby providing a relatively comprehensive story of sampling from light-tailed densities. 

Motivated by applications in robust statistics~\citep{kotz2004multivariate, jarner2007convergence, kamatani2018efficient}, multiple comparison procedures~\citep{genz2004approximations, genz2009computation}, Bayesian statistics~\citep{gelman2008weakly,ghosh2018use}, and statistical machine learning~\citep{balcan2017sample, nguyen2019non, simsekli2020fractional,diakonikolas2020learning}, in this work, we are interested in sampling from densities that have heavy-tails, for example, those with tails that are polynomially decaying. When the target density $\pi$ is heavy-tailed, the solution to~\eqref{LDy} does not converge exponentially to its stationary density in various metrics of interest. Indeed, Theorem 2.4 by~\cite{roberts1996exponential} shows that if $|\nabla \log \pi (x)| \to 0$ as $|x| \to \infty$, then the solution to~\eqref{LDy} is \emph{not} exponentially ergodic. In the other direction, standard results in the literature, for example~\cite{wang2006functional, bakry2014analysis} show that the solution to~\eqref{LDy} converging exponentially fast to its equilibrium density in the $\chi^2$ metric, is equivalent to the density $\pi$ satisfying the Poincar\'e inequality, which in turn requires $\pi$ to have exponentially decaying tails. Furthermore, when $\pi$ has  polynomially decaying tails, the convergence is only sub-exponential or polynomial~\cite[Chapter 4]{wang2006functional}. Consequently, the algorithms obtained as discretizations of the Langevin diffusion in~\eqref{LDy} are suited to sampling only from light-tailed exponentially decaying densities, and are rather inefficient for sampling from heavy-tailed densities. 

Our approach to heavy-tailed sampling is hence based on discretizing certain natural It\^o diffusions that arise in the context of the following Weighted Poincar\'e inequality~\citep{blanchet2009asymptotics, bobkov2009weighted}. Such inequalities could be considered generalizations of the  Brascamp-Lieb inequality (established for the class of log-concave densities) to a class of heavy-tailed densities. 

\begin{theorem}[Weighted Poincar\'e Inequality; Theorem 2.3 in \cite{bobkov2009weighted}]\label{thm:2.3} 
Let the target density be of the form $\pi_\beta\propto V^{-\beta}$ with $\beta> d$ and $V\in \mc{C}^2(\mb{R}^d)$ positive, convex and with $(\nabla^2 V)^{-1}(x)$ well-defined for all $x\in \mb{R}^d$. For any smooth and $\pi_\beta$-integrable function $g$ on $\mb{R}^d$ and $G=Vg$,
\begin{equation}\label{eq:thm 2.3}
    (\beta+1)Var_{\pi_\beta}(g)\le \int_{\mb{R}^d} \frac{\langle (\nabla^2 V)^{-1}\nabla G,
    \nabla G \rangle}{V} d\pi_\beta +\frac{d}{\beta-d}\left( \int_{\mb{R}^d} g d\pi_\beta \right).
\end{equation}
\end{theorem}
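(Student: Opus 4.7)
The plan is to derive this inequality as a weighted Brascamp--Lieb-type variance bound adapted to the non-log-concave measure $\pi_\beta\propto V^{-\beta}$. The structural observation that makes such an adaptation possible is that although $V^{-\beta}$ is not log-concave, its score $\nabla\log\pi_\beta=-\beta\,\nabla V/V$ has an explicit factorization which interacts cleanly with the substitution $G=Vg$; the extra factor $V^{-1}$ in the Dirichlet form is exactly what compensates for the loss of log-concavity. I would start by reducing to the centred case $\int g\,d\pi_\beta=0$, which isolates the variance and lets me track the origin of the affine residual $\frac{d}{\beta-d}\int g\,d\pi_\beta$ separately.

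The main computation rests on the integration-by-parts identity, valid for smooth compactly supported vector fields $\xi$,
\begin{equation*}
\int \mathrm{div}(\xi)\,d\pi_\beta \;=\; \beta\int V^{-1}\langle\nabla V,\xi\rangle\,d\pi_\beta .
\end{equation*}
Choosing $\xi=(\nabla^2 V)^{-1}\nabla G$ and expanding $\nabla G = V\nabla g + g\nabla V$, the right-hand side splits into a piece that after further integration by parts becomes proportional to $\int g^2\,d\pi_\beta$, a piece that produces the Dirichlet form $\int V^{-1}\langle(\nabla^2 V)^{-1}\nabla G,\nabla G\rangle\,d\pi_\beta$, and a cross term involving $\int g\,\langle(\nabla^2 V)^{-1}\nabla V,\nabla V\rangle V^{-1}\,d\pi_\beta$. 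The convexity hypothesis on $V$ ensures $(\nabla^2 V)^{-1}\succeq 0$, so a Cauchy--Schwarz in the $(\nabla^2 V)^{-1}$-inner product can be used to absorb the cross term and extract the coefficient $\beta+1$ in front of the variance.

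To pin down the dimensional residual $\frac{d}{\beta-d}\int g\,d\pi_\beta$, I would invoke the convex Euler-type relation $\langle x,\nabla V(x)\rangle\le V(x)$ (up to an affine normalization of $V$), together with $\mathrm{tr}\big((\nabla^2 V)^{-1}\nabla^2 V\big)=d$; these combine with the standard polynomial moment computation to give $\int\langle(\nabla^2 V)^{-1}\nabla V,\nabla V\rangle V^{-1}\,d\pi_\beta\le d/(\beta-d)$. The threshold $\beta>d$ is precisely what guarantees convergence of this integral, matching the integrability hypothesis on $\pi_\beta$ itself.

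The main obstacle I anticipate is justifying the integration by parts for a general $\pi_\beta$-integrable $g$, since $V^{-\beta}$ decays only polynomially and no a priori exponential moment bounds are available. I would handle this by first proving the inequality for smooth, compactly supported $g$, where every integral in sight is finite and the divergence identity above produces no boundary terms, and then passing to the limit via a truncation/mollification argument, using $\beta>d$ and the strict convexity of $V$ (implicit in $(\nabla^2 V)^{-1}$ being well defined) to ensure dominated convergence of each term as the truncation is removed.
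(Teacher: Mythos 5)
Your proposal does not match how this statement is established: the paper does not prove Theorem~\ref{thm:2.3} at all, but imports it from \cite{bobkov2009weighted}, where it is obtained from the theory of $\kappa$-concave (convex) measures --- essentially by a second-order linearization of the dimensional Borell--Brascamp--Lieb inequality --- and the factor $\frac{d}{\beta-d}$ comes from the concavity parameter $\kappa=-1/(\beta-d)$ of $V^{-\beta}$, not from a moment computation. (Also note the last term should read $\frac{d}{\beta-d}\big(\int g\,d\pi_\beta\big)^2$, as in the source; by homogeneity in $g$ a term linear in $\int g\,d\pi_\beta$ cannot be correct, and your accounting of the residual inherits this problem.)

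More importantly, the blind argument has concrete gaps that I do not think can be repaired along the lines you sketch. First, the ``convex Euler-type relation'' $\langle x,\nabla V(x)\rangle\le V(x)$ is not a consequence of positivity and convexity: convexity gives the opposite direction, $\langle x,\nabla V(x)\rangle\ge V(x)-V(0)$, and $V(x)=1+|x|^2$ already violates your inequality for $|x|>1$; ``affine normalization'' cannot fix this, since the claimed conclusion is not affine-invariant. Second, the key moment lemma you rely on, $\int V^{-1}\langle(\nabla^2V)^{-1}\nabla V,\nabla V\rangle\,d\pi_\beta\le \frac{d}{\beta-d}$, is false under the stated hypotheses. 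Take $d=1$, $V(x)=\sqrt{1+x^2}$ (positive, $\mathcal{C}^2$, strictly convex), so that pointwise $\frac{(V'')^{-1}(V')^2}{V}=x^2$; with $\beta=2$ the measure $\pi_\beta$ is the standard Cauchy law and the left-hand side is $\mathbb{E}[x^2]=\infty$ while your bound is $1$, and for $\beta>3$ the left-hand side equals $\frac{1}{\beta-3}>\frac{1}{\beta-1}$. So the step meant to produce the dimensional residual collapses. Third, the central expansion is unsubstantiated: after inserting $\xi=(\nabla^2V)^{-1}\nabla G$ into the divergence identity, the ``further integration by parts'' that is supposed to generate a multiple of $\int g^2\,d\pi_\beta$ requires differentiating $(\nabla^2V)^{-1}$, producing third-derivative terms of $V$ with no sign or cancellation mechanism, and no argument is given for why the variance emerges with the sharp coefficient $\beta+1$; this is essentially the same obstruction one meets when trying to prove the classical Brascamp--Lieb inequality by naive integration by parts, which is why the known proofs go through Pr\'ekopa--Leindler/Borell--Brascamp--Lieb linearization, $L^2$ (H\"ormander/Bochner) methods, or duality. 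Finally, the preliminary ``reduction to the centred case'' is not free here: the Dirichlet form is expressed in $G=Vg$ and is not invariant under $g\mapsto g-c$, so the uncentred statement does not follow from the centred one without exactly the quantitative control that your (false) moment lemma was supposed to supply.
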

\noindent A canonical example of a heavy-tailed density that satisfies the conditions in Theorem~\ref{thm:2.3}, and hence~\eqref{eq:thm 2.3}, is the multivariate $t$-distribution. In particular, we consider the following It\^o diffusion process 

\begin{equation}\label{eq:diffusion strongly convex V}
    dX_t=-(\beta-1)\nabla V(X_t) dt+\sqrt{2V(X_t)} dB_t,
\end{equation}
where $(B_t)_{t\ge 0}$ is a standard Brownian motion in $\mb{R}^d$. The It\^o diffusion in~\eqref{eq:diffusion strongly convex V} converges exponentially fast to the target $\pi_\beta$ in the $\chi^2$-divergence as long as it satisfies the Weighted Poincar\'e inequality and additional mild assumptions; see Proposition~\ref{prop:chi square cts strongly convex V} for details. Hence, we study the oracle complexity of the Euler-Maruyama discretization of~\eqref{eq:diffusion strongly convex V}, for sampling from heavy-tailed densities. Our proofs are based on \emph{mean-square analysis} techniques, a popular technique to analyze numerical discretizations of stochastic differential equations; see, for example,~\cite{milstein2004stochastic} for an overview. Our results in this paper pushes mean-square analysis to its limits; the heavy-tailed densities we consider invariably need to have only finite variance, which is the minimum requirement when using this technique. 

\subsection{Our Contributions}
In this work, we make the following contributions:
\begin{itemize}[noitemsep]
    \item In Theorem~\ref{thm:strongly convex V W2 decay}, we provide upper bounds on the number of iterations required by the Euler-Maruyama discretization of~\eqref{eq:diffusion strongly convex V} to obtain a sample that is $\epsilon$-close in the Wasserstein-2 metric to the target density. The established bounds are in terms of certain (first and second-order) moments of the target density $\pi$.  Our proof technique is based on a mean-squared analysis; we demonstrate that for the case of multivariate $t$-distributions, our analysis is non-vacuous as long as the density has finite variance, a necessary condition to carry out the mean-squared analysis. 
    \item While the result in Theorem~\ref{thm:strongly convex V W2 decay} assumes access to the exact gradient of the unnormalized target density function (referred to as the first-order setting), in Theorem~\ref{thm:zeroth order algorithm strongly convex V}, we analyze the case when the gradient is estimated based on function evaluations (the zeroth-order setting) based on a Gaussian smoothing technique. 
    \item We provide several illustrative examples highlighting the differences between the results in the first and the zeroth-order setting. Specifically, in Section~\ref{subsec:exponential contractivity} we show that for the multivariate $t$-distribution with smaller degrees of freedom, (and hence the truly heavy-tailed case) the gradient estimation error is dominated by the discretization error. Whereas, in the case with larger degrees of freedom (and hence the comparatively moderately heavy-tailed case), the discretization error is of comparable order to the gradient estimation error. Hence, the zeroth-order algorithm matches the iteration complexity of the first-order algorithm by using mini-batch gradient estimators. 
\end{itemize}

\subsection{Related Work}
Non-asymptotic iteration complexity of different discretizations of~\eqref{LDy} have been analyzed extensively in the last decade. The analysis of the Unadjusted Langevin Algorithm (ULA) under various light-tailed assumptions was carried out, for example, in \cite{dalalyan2017theoretical, durmus2017nonasymptotic, dalalyan2019user,durmus2019analysis, lee2020logsmooth, shen2019randomized, he2020ergodicity, chen2020fast, durmus2019analysis, dalalyan2019bounding, li2020riemannian, chen2020fast, chewi2021analysis} and references therein. In particular,~\cite{vempala2019rapid,erdogdu2021convergence, chewi2021analysis} analyzed the performance of ULA under various functional inequalities suited to light-tailed densities. Furthermore, the recent work of~\cite{balasubramanian2022towards} analyzed the performance of (averaged) ULA for target densities that are only H\"{o}lder continuous, albeit in the weaker Fisher information metric.

Several works, for example,~\cite{dwivedi2019log, chewi2021optimal, wu2022minimax}, analyzed the Metropolis-Adjusted Langevin Algorithm (MALA) in light-tailed settings. The proximal sampler algorithm was analyzed under various light-tailed assumptions in~\cite{lee2021structured, chen2022improved}. 
The iteration complexity of the widely used Hamiltonian Monte Carlo algorithm and discretizations of underdamped Langevin diffusions were analyzed, for example,  in~\cite{dalalyan2020sampling,bou2020coupling,chen2020fast, ma2021there,monmarche2021high,cao2021complexity, wang2022accelerating,chen2022optimal}. We also refer interested readers to~\cite{lu2022complexity,ding2021langevin} for non-asymptotic analyses of other MCMC algorithms used in practice in light-tailed settings.

In the context of heavy-tailed sampling,~\cite{kamatani2018efficient} considered the scaling limits of appropriately modified Metropolis random walk in an asymptotic setting. \cite{johnson2012variable} proposed a variable transformation method in the context of Metropolis Random Walk algorithms. Here, the heavy-tailed density is converted into a light-tailed one based on certain invertible transformations so that one can leverage the rich literature on light-tailed sampling algorithms. Similar ideas were also examined recently in~\cite{yang2022stereographic}. It is also worth highlighting that~\cite{deligiannidis2019exponential,durmus2020geometric} and~\cite{bierkens2019ergodicity} used the transformation approach for proving asymptotic exponential ergodicity of bouncy particle and zig-zag samplers respectively, in the heavy-tailed setting. We also point out the recent works of~\cite{andrieu2021subgeometric} and \cite{andrieu2021comparison} that establish similar sub-exponential ergodicity results for other sampling methods such as the piecewise deterministic Markov process Monte Carlo, independent Metropolis-Hastings sampler and pseudo-marginal methods in the polynomially heavy-tailed setting. The works of~\cite{simsekli2020fractional, huang2021approximation} and~\cite{zhang2022ergodicity} established exponential ergodicity results for diffusions driven by $\alpha$-stable processes with heavy-tailed densities as its equilibrium in the continuous-time setting. However, the problem of obtaining convergence results for practical discretizations of these diffusions is still largely open.

The literature on non-asymptotic oracle complexity analysis of heavy-tailed sampling is extremely limited.~\cite{chandrasekaran2009sampling} considered the iteration complexity of Metropolis random walk algorithm for sampling from $s$-concave distributions.~\cite{he2022heavy} considered ULA on a class of transformed densities (i.e., the heavy-tailed density is transformed to a light-tailed one with an invertible transformation, similar to~\cite{johnson2012variable}) and established non-asymptotic oracle complexity results. However, they focused mainly on the case of isotropic densities.~\cite{li2019stochastic} analyzed a class of discretizations of general It\^o diffusions that admit heavy-tailed equilibrium densities. A detailed comparison to~\cite{li2019stochastic} is provided in Section~\ref{subsec:exponential contractivity}.

The recent works by~\cite{hsieh2018mirrored,zhang2020wasserstein, chewi2020exponential,ahn2021efficient,jiang2021mirror,li2022mirror} also considered sampling based on discretizations of the Mirror Langevin diffusions. The above-mentioned  works mainly focus on sampling from constrained densities. The continuous-time convergence is analyzed typically under the so-called mirror Poincar\'e inequalities which are generalizations of the Brascamp-Lieb inequalities in a different direction compared to the Weighted Poincar\'e inequalities. The discretization analysis by~\cite{li2022mirror} is based on mean-squared analysis.

As mentioned previously, our work leverages the literature on weighted functional inequalities, that are satisfied by heavy-tailed densities. The weighted Poincare inequality was introduced in~\cite{blanchet2009asymptotics} and \cite{bobkov2009weighted}, and using an extension of the Brascamp-Lieb inequality, is shown to hold for the class of $s$-concave densities. We also refer the interested reader to~\cite{cattiaux2010functional,cattiaux2011some,bonnefont2016spectral,cordero2017transport, cattiaux2019entropic} for various extensions and improvements of the works of~\cite{blanchet2009asymptotics} and~\cite{bobkov2009weighted}.

\subsection{Notation}
We use the following notation throughout the rest of the paper.
\begin{itemize}[noitemsep]
    \item $\langle \cdot, \cdot \rangle$ denotes the Euclidean inner product and $|\cdot|$ denotes the Euclidean norm.
    \item For two matrices $A$ and $B$, $A \preceq B$ means that $B-A$ is positive semi-definite. The 2-norm of any $d\times d$ matrix $A$ is denoted as $\lv A \rv_2$. $I_d$ is the \(d \times d\) identity matrix. 
    \item $\Delta$ denotes the Laplacian, and $\nabla$ denotes the gradient of a given function.
    \item $\mc{C}^2(\mb{R}^d)$ refers to the set of all real functions on $\mb{R}^d$ that are twice continuously differentiable. $\mc{C}^2_c(\mb{R}^d)$ refers to the set of all functions in $\mc{C}^2(\mb{R}^d)$ with compact support.
    \item The Wasserstein-2 distance between two probability measures on $\mb{R}^d$, $\mu$ and $\nu$ is given by
    \begin{align*}
        W_2(\mu,\nu):=\inf_{\zeta\in C(\mu,\nu) } \left(\int_{\mb{R}^d \times \mb{R}^d} |x-y|^2 \zeta(dx,dy) \right)^{\frac{1}{2}}.
    \end{align*}
    where $C(\mu,\nu)$ is the set of all measures on $\mb{R}^d\times\mb{R}^d$ whose marginals are $\mu$ and $\nu$ respectively. 
    \item The $\chi^2$ divergence from a probability measure $\nu$ to a probability measure $\mu$ is defined as
    \begin{align*}
        \chi^2(\nu|\mu):=\int_{\mb{R}^d} \left( \frac{\nu(dx)}{\mu(dx)}-1\right)^2 \mu(dx).
    \end{align*}
    \item The gamma and beta functions are given by:
    \begin{align*}
    \Gamma(z):=\int_0^\infty t^{z-1} e^{-t} dt,~~\forall \ z>0, \quad\text{and}\quad       B(x,y):=\int_0^1 t^{x-1}(1-t)^{y-1} dt,~~\forall\ x,y>0.
    \end{align*}
    \item For two positive quantities $f(d),g(d)$ depending on $d$, we define $f(d)= O(g(d))$ if there exists a constant $C>0$ such that $f(d)\le C g(d)$ for all $d>1$. We define $f(d)=\Theta(g(d))$ if there exist constants $C_1,C_2>0$ such that $C_1 g(d)\le f(d)\le C_2 g(d)$ for all $d>1$. We use $\tilde{O}$ to hide $\log$ factors in the $O$ notation.\end{itemize}

\subsection{Organization}

In Section \ref{sec:weighted Poincare inequality}, we first establish the exponential ergodicity of the It\^o diffusion in~\eqref{eq:diffusion strongly convex V} under certain assumptions that are favorable for the discretization analysis. We next provide our main results on the non-asymptotic oracle complexity of the Euler-Maruyama discretization of~\eqref{eq:diffusion strongly convex V}. In Section~\ref{subsec:calculations on expectations}, we provide moment computations in the heavy-tailed setting that are required to obtain explicit rates from the results in Section~\ref{sec:weighted Poincare inequality}. In Section~\ref{sec:zosetting}, we provide an extension of our results to the zeroth-order setting. In Section~\ref{subsec:exponential contractivity} we provide several illustrative examples. We discuss further implications of our assumptions in Section~\ref{sec:further}. The proofs are provided in Section~\ref{sec:proof} and in Appendices~\ref{sec:appgencase},~\ref{cauchyexample} and~\ref{sec:cauchy application}.

\section{It\^o Discretizations and Weighted Poincare inequalities}\label{sec:weighted Poincare inequality}

In this section, our goal is to analyze the It\^o diffusion in~\eqref{eq:diffusion strongly convex V} which admits a specific class of heavy-tailed densities as its stationary density. Let $X_0$ follow distribution $\rho_0$ and denote the distribution of $X_t$ by $\rho_t$ for all $t\ge 0$. For any function $\psi\in \mc{C}^2_c(\mb{R}^d)$, the infinitesimal generator of \eqref{eq:diffusion strongly convex V} is given by
\begin{align}\label{eq: generator convex V}
    \mc{L}\psi=-(\beta-1)  \langle \nabla V , \nabla \psi \rangle + V \Delta \psi.
\end{align}
Hence, the Fokker-Planck equation corresponding to \eqref{eq:diffusion strongly convex V} is
\begin{align}\label{eq:FPE strongly convex V}
    \partial_t \rho_t=\nabla\cdot \left( \beta\rho_t \nabla V+V\nabla \rho_t \right) =\nabla \cdot \left( \rho_t V \nabla \log \frac{\rho_t}{\pi_\beta} \right).
\end{align}
It follows that, under the conditions in Theorem~\ref{thm:2.3}, $\pi_\beta \propto V^{-\beta}$ is the unique stationary density of \eqref{eq:diffusion strongly convex V}. We next examine the convergence properties of~\eqref{eq:diffusion strongly convex V} to its stationary density. To do so, we introduce the following assumption. 

\begin{assump}\label{ass:V1} 
There exists a positive constant $C_V$ such that, for all $x\in\mb{R}^d$, 
\begin{align*}
\frac{\langle (\nabla^2 V)^{-1}(x)\nabla V(x),\nabla V(x) \rangle}{V(x)}\le C_V. 
\end{align*}
\end{assump}

When $V$ is radially symmetric, i.e., when \(V(x) := \phi(|x|)\) for some \(\phi \in \mc{C}^2(\mb{R_+})\), the condition in Assumption~\ref{ass:V1} simplifies as follows. Note that 
\begin{align*}
\nabla V(x) = \frac{\phi'(|x|) }{|x|} x , \qquad \text{and}\qquad \nabla^2 V = \left(\phi''(|x|) - \frac{\phi'(|x|)}{|x|}\right) \frac{x \otimes x}{|x|^2} + \frac{\phi'(|x|)}{|x|} I_d,
\end{align*}
where $\otimes$ denotes outer-product. Hence, it follows that it is sufficient for \(\phi\) to satisfy $$\phi'(r) \leq (\phi''(r) r) \wedge (C_V \phi(r)/r), \text{ for all } r \geq 0.$$ For example, this property holds with \(C_V = p\) if \(\phi\) is a \(p\)-order polynomial with \(p \geq 2\) and non-negative coefficients. 

We next provide the following corollary to Theorem \ref{thm:2.3}, motivated by the discussion in Section 2 of \cite{bobkov2009weighted}.

\begin{corollary}\label{lem:WPI convex V} Consider the setting of Theorem \ref{thm:2.3} and suppose further that Assumption \ref{ass:V1} holds with $C_V\in (0,\beta+1)$, then for any smooth, $\pi_\beta$-integrable function, $\phi$ on $\mb{R}^d$,
\begin{align}\label{eq:WPI convex V}
    Var_{\pi_\beta}(\phi) &\le \left( \sqrt{\beta+1}-\sqrt{C_V} \right)^{-2} \int_{\mb{R}^d} \langle V(x)(\nabla^2 V)^{-1}(x)\nabla \phi(x),\nabla \phi(x) \rangle \pi_\beta(x)dx.
\end{align}
\end{corollary}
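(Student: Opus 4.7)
The plan is to derive \eqref{eq:WPI convex V} by applying Theorem~\ref{thm:2.3} to the centered function $\tilde\phi := \phi - \int \phi \, d\pi_\beta$ and then controlling the $\nabla G$--dependent right-hand side of \eqref{eq:thm 2.3} pointwise in terms of $\nabla \phi$ via Assumption~\ref{ass:V1} and Cauchy--Schwarz. Since the variance is shift-invariant and $\int \tilde\phi \, d\pi_\beta = 0$, the additive last term in~\eqref{eq:thm 2.3} drops out, reducing matters to
\[
(\beta+1)\,\mathrm{Var}_{\pi_\beta}(\phi) \le \int_{\mb{R}^d}\frac{\langle(\nabla^2 V)^{-1}\nabla(V\tilde\phi),\nabla(V\tilde\phi)\rangle}{V}\,d\pi_\beta.
\]

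Next I would expand $\nabla(V\tilde\phi) = \tilde\phi\,\nabla V + V\,\nabla\phi$ (using $\nabla\tilde\phi = \nabla\phi$) and write the pointwise quadratic form on the right-hand side as a sum of three terms. Assumption~\ref{ass:V1} immediately bounds the pure $\nabla V$ term by $C_V\,\tilde\phi^2$, while the pure $\nabla\phi$ term equals $V\,\langle(\nabla^2 V)^{-1}\nabla\phi,\nabla\phi\rangle$. For the cross term I would apply the pointwise Cauchy--Schwarz inequality associated with the positive definite matrix $(\nabla^2 V)^{-1}$, together with Assumption~\ref{ass:V1} once more, to bound it by $2\sqrt{C_V}\,|\tilde\phi|\sqrt{V\,\langle(\nabla^2 V)^{-1}\nabla\phi,\nabla\phi\rangle}$. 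Integrating against $\pi_\beta$ and applying Cauchy--Schwarz at the integral level to the cross term then gives
\[
\int_{\mb{R}^d} \frac{\langle(\nabla^2 V)^{-1}\nabla(V\tilde\phi),\nabla(V\tilde\phi)\rangle}{V}\,d\pi_\beta \le \left(\sqrt{C_V\,A} + \sqrt{B}\,\right)^2,
\]
with $A := \mathrm{Var}_{\pi_\beta}(\phi)$ and $B := \int V\,\langle(\nabla^2 V)^{-1}\nabla\phi,\nabla\phi\rangle\,d\pi_\beta$.

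Combining the two displays yields $(\beta+1)\,A \le (\sqrt{C_V\,A}+\sqrt{B})^2$. Since $C_V \in (0,\beta+1)$ by hypothesis, I can take square roots to obtain $\sqrt{\beta+1}\,\sqrt{A} \le \sqrt{C_V}\,\sqrt{A} + \sqrt{B}$, rearrange to $(\sqrt{\beta+1}-\sqrt{C_V})\,\sqrt{A} \le \sqrt{B}$, and square to recover~\eqref{eq:WPI convex V}.

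The main obstacle I anticipate is arranging the pointwise estimates so that the three terms combine into the exact perfect square $(\sqrt{C_V\,A}+\sqrt{B})^2$; this forces the precise chain of two Cauchy--Schwarz inequalities (first pointwise with respect to the matrix $(\nabla^2 V)^{-1}$, and then at the integral level against $\pi_\beta$) rather than a single Young-type splitting, which would introduce a free parameter and would fail to yield the sharp constant $(\sqrt{\beta+1}-\sqrt{C_V})^{-2}$ required by~\eqref{eq:WPI convex V}.
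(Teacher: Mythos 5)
Your proof is correct: the centering, the expansion $\nabla(V\tilde\phi)=\tilde\phi\,\nabla V+V\nabla\phi$, the two Cauchy--Schwarz steps (pointwise in the $(\nabla^2 V)^{-1}$ inner product, then in $L^2(\pi_\beta)$), and the resolution of the resulting quadratic inequality $(\beta+1)A\le(\sqrt{C_V A}+\sqrt{B})^2$ in $\sqrt{A}$ all go through, and the hypothesis $C_V<\beta+1$ is used exactly where needed to make $\sqrt{\beta+1}-\sqrt{C_V}$ positive. The paper reaches the same bound from the same decomposition but handles the cross term differently: instead of Cauchy--Schwarz, it applies the pointwise parameterized inequality $\langle A(u+v),u+v\rangle\le r\langle Au,u\rangle+\tfrac{r}{r-1}\langle Av,v\rangle$ with $u=\tilde\phi\,\nabla V$, $v=V\nabla\phi$, obtains $A\le \tfrac{r}{(r-1)(\beta+1-rC_V)}B$ for every $r\in(1,(\beta+1)/C_V)$, and then optimizes by choosing $r=\sqrt{(\beta+1)/C_V}$, which yields precisely the constant $(\sqrt{\beta+1}-\sqrt{C_V})^{-2}$. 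So your closing remark — that a Young-type splitting with a free parameter would fail to produce the sharp constant — is not accurate: the optimized Young splitting is equivalent to your double Cauchy--Schwarz argument (as usual, Cauchy--Schwarz is the envelope of the parameterized AM--GM bounds), and the paper's route has the mild advantage of staying pointwise and linear in the two quadratic forms until the last step, while yours avoids introducing and optimizing a parameter. One small point common to both arguments: to rearrange, one implicitly needs $\mathrm{Var}_{\pi_\beta}(\phi)$ (respectively $\int g^2\,d\pi_\beta$) to be finite, which is the natural reading of the integrability hypothesis.
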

\begin{proof}[Proof]
We start from \eqref{eq:thm 2.3}, assume that $\int_{\mb{R}^d} g d\pi_\beta=0$. Then \eqref{eq:thm 2.3} could be rewritten as
\begin{align*}
    (\beta+1)\int_{\mb{R}^d} g(x)^2 \pi_\beta(x)dx\le \int_{\mb{R}^d} \frac{\langle (\nabla^2 V)^{-1}(x) \nabla (gV)(x),\nabla(gV)(x) \rangle}{V(x)} \pi_\beta(x)dx.
\end{align*}
Now, note that we have the following elementary bound 
\begin{align*}
    \langle A(u+v),(u+v) \rangle\le r\langle Au,u \rangle+\frac{r}{r-1} \langle Av,v \rangle,\quad u,v\in \mb{R}^d, r>1,
\end{align*}
for any arbitrary positive definite symmetric matrix $A\in \mb{R}^{d\times d}$. Hence, we obtain
\begin{align*}
    (\beta+1)\int_{\mb{R}^d} g(x)^2 \pi_\beta(x)dx & \le r \int_{\mb{R}^d} \frac{\langle (\nabla^2 V)^{-1}(x)g(x)\nabla V(x),g(x)\nabla V(x) \rangle}{V(x)}\pi_\beta(x)dx\\
    &\ +\frac{r}{r-1}\int_{\mb{R}^d}\frac{\langle (\nabla^2 V)^{-1}(x) V(x)\nabla g(x),V(x)\nabla g(x) \rangle}{V(x)}\pi_\beta(x)dx.
\end{align*}
Invoking the condition in Assumption \ref{ass:V1}, we further obtain
\begin{align*}
    (\beta+1)\int_{\mb{R}^d} g(x)^2 \pi_\beta(x)dx & \le rC_V \int_{\mb{R}^d} g(x)^2 \pi_\beta(x)dx \\ 
    &\qquad \qquad +\frac{r}{r-1}\int_{\mb{R}^d} \langle V(x)(\nabla^2 V)^{-1}(x)\nabla g(x),\nabla g(x) \rangle \pi_\beta(x)dx, 
\end{align*}
which then implies that, for any \(r \in (1, (\beta + 1)/C_V)\),
\begin{align*}
\int_{\mb{R}^d} g(x)^2 \pi_\beta(x)dx &\le \frac{r}{(r-1)(\beta+1-r C_V)} \int_{\mb{R}^d} \langle V(x)(\nabla^2 V)^{-1}(x)\nabla g(x),\nabla g(x) \rangle \pi_\beta(x)dx.
\end{align*}
With the choice of $r\coloneqq \sqrt{\frac{\beta+1}{C_V}}>1$, we get that for all $g$ such that $\int g d\pi_\beta=0$, and 
\begin{align*}
    \int_{\mb{R}^d} g(x)^2 \pi_\beta(x)dx &\le \left( \sqrt{\beta+1}-\sqrt{C_V} \right)^{-2} \int_{\mb{R}^d} \langle V(x)(\nabla^2 V)^{-1}(x)\nabla g(x),\nabla g(x) \rangle \pi_\beta(x)dx.
\end{align*}
For all general $\phi$, letting $g=\phi-\int \phi d\pi_\beta$, we get
\begin{align*}
    Var_{\pi_\beta}(\phi) &\le \left( \sqrt{\beta+1}-\sqrt{C_V} \right)^{-2} \int_{\mb{R}^d} \langle V(x)(\nabla^2 V)^{-1}(x)\nabla \phi(x),\nabla \phi(x) \rangle \pi_\beta(x)dx.
\end{align*}
\end{proof}
When $V$ is strongly convex, Assumption \ref{ass:V1} holds under the following sufficient condition.
\begin{assump}\label{ass:WPI strongly convex V} The function $V:\mb{R}^d\to (0,\infty)$ is twice continuously differentiable and V satisfies
\begin{itemize}
    \item [(1)] $V$ is $\alpha$-strongly convex, i.e. $\nabla^2 V(x)\succeq \alpha I_d $ for all $x\in \mb{R}^d$.
    \item [(2)] There exists a positive constant $C_V$ such that, for all $x\in\mb{R}^d$,
    $$
    \frac{\langle \nabla V(x),\nabla V(x) \rangle}{V(x)}\le \alpha C_V.
    $$
\end{itemize}
\end{assump}

The following result follows immediately from Assumption~\ref{ass:WPI strongly convex V}.
\begin{lemma}\label{cor:WPI strongly convex V} Let $\beta>d$. If Assumption \ref{ass:WPI strongly convex V} holds with $C_V\in (0,\beta+1)$, then for any smooth, $\pi_\beta$ integrable function $\phi$ on $\mb{R}^d$, we have \begin{align}\label{eq:WPI strongly convex V}
    \text{Var}_{\pi_\beta}(\phi) &\le \alpha^{-1}\left( \sqrt{\beta+1}-\sqrt{C_V} \right)^{-2} \int_{\mb{R}^d} V(x)|\nabla \phi(x)|^2 \pi_\beta(x)dx.
\end{align}
\end{lemma}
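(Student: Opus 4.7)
The plan is to obtain Lemma~\ref{cor:WPI strongly convex V} as an immediate specialization of Corollary~\ref{lem:WPI convex V}. The only thing to check is that Assumption~\ref{ass:WPI strongly convex V} implies the hypotheses of that corollary (namely Assumption~\ref{ass:V1} with the same constant $C_V$), and then to replace the $(\nabla^2 V)^{-1}$-weighted Dirichlet form on the right-hand side by the simpler expression $\int V|\nabla\phi|^2 d\pi_\beta$ using strong convexity.

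First, I would observe that $\alpha$-strong convexity of $V$ gives $\nabla^2 V(x)\succeq \alpha I_d$ pointwise, and hence the inverse Hessian is everywhere defined and satisfies $(\nabla^2 V)^{-1}(x)\preceq \alpha^{-1} I_d$. In particular, for any vector $u\in\mb{R}^d$,
\begin{equation*}
\langle (\nabla^2 V)^{-1}(x) u, u\rangle \le \alpha^{-1} |u|^2.
\end{equation*}
Applying this with $u=\nabla V(x)$ and then using part (2) of Assumption~\ref{ass:WPI strongly convex V} yields
\begin{equation*}
\frac{\langle (\nabla^2 V)^{-1}(x)\nabla V(x),\nabla V(x)\rangle}{V(x)} \le \frac{|\nabla V(x)|^2}{\alpha V(x)} \le C_V,
\end{equation*}
so Assumption~\ref{ass:V1} holds with the same $C_V\in(0,\beta+1)$. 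Also, $V$ being twice continuously differentiable, positive and strongly convex (hence convex) places us in the setting of Theorem~\ref{thm:2.3} and therefore of Corollary~\ref{lem:WPI convex V}.

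Having verified the hypotheses, I would invoke Corollary~\ref{lem:WPI convex V} to get
\begin{equation*}
\text{Var}_{\pi_\beta}(\phi) \le \left(\sqrt{\beta+1}-\sqrt{C_V}\right)^{-2} \int_{\mb{R}^d} \langle V(x)(\nabla^2 V)^{-1}(x)\nabla\phi(x),\nabla\phi(x)\rangle\, \pi_\beta(x)\, dx.
\end{equation*}
A second application of $(\nabla^2 V)^{-1}(x)\preceq \alpha^{-1} I_d$, this time with $u=\nabla\phi(x)$, gives the pointwise bound $\langle V(x)(\nabla^2 V)^{-1}(x)\nabla\phi(x),\nabla\phi(x)\rangle \le \alpha^{-1} V(x)|\nabla\phi(x)|^2$. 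Substituting into the previous display yields \eqref{eq:WPI strongly convex V}.

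There is no real obstacle here; the lemma is essentially a bookkeeping step that converts the abstract $(\nabla^2 V)^{-1}$-weighted inequality from Corollary~\ref{lem:WPI convex V} into a clean isotropic form, paying a factor of $\alpha^{-1}$ for strong convexity. The only subtle point to keep in mind is that the constant $C_V$ appearing in both assumptions can be taken to be the same, because part (2) of Assumption~\ref{ass:WPI strongly convex V} is already normalized by $\alpha$, which exactly cancels the $\alpha^{-1}$ coming from bounding $(\nabla^2 V)^{-1}$ by $\alpha^{-1}I_d$ in the verification of Assumption~\ref{ass:V1}.
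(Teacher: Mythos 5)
Your proposal is correct and follows exactly the route the paper intends: Assumption \ref{ass:WPI strongly convex V} implies Assumption \ref{ass:V1} with the same $C_V$ (via $(\nabla^2 V)^{-1}\preceq\alpha^{-1}I_d$), and then Corollary \ref{lem:WPI convex V} combined with the same matrix bound on the Dirichlet form yields \eqref{eq:WPI strongly convex V} with the extra factor $\alpha^{-1}$. The paper leaves this as an immediate consequence without writing it out, and your argument fills in precisely those steps.
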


 With \eqref{eq:WPI strongly convex V}, we can show the exponential decay in $\chi^2$-divergence along \eqref{eq:diffusion strongly convex V}. The proof of the following proposition is standard and we include it here for completeness. 
\begin{proposition}\label{prop:chi square cts strongly convex V} Under the conditions in Lemma \ref{cor:WPI strongly convex V}, for $(X_t)$ following diffusion \eqref{eq:diffusion strongly convex V} with $\rho_t$ being the distribution of $X_t$, we have
\begin{align}\label{eq:chi square cts strongly convex}
    \chi^2(\rho_t|\pi_\beta)\le \exp\left( -2\alpha\left(\sqrt{\beta+1}-\sqrt{C_V}\right)^2 t \right)\chi^2(\rho_0|\pi_\beta).
\end{align}
\end{proposition}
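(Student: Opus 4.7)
The plan is a standard entropy-dissipation argument, using the weighted Poincar\'e inequality from Lemma~\ref{cor:WPI strongly convex V} as the functional inequality that drives the $\chi^2$ decay. Writing $h_t \coloneqq d\rho_t/d\pi_\beta$, I would first note that
\[
\chi^2(\rho_t\mid\pi_\beta) = \int_{\mb{R}^d}(h_t-1)^2\, d\pi_\beta = \int_{\mb{R}^d} h_t^2\, d\pi_\beta - 1,
\]
so the goal reduces to bounding $\frac{d}{dt}\int h_t^2\, d\pi_\beta$ from above.

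Next, I would rewrite the Fokker--Planck equation~\eqref{eq:FPE strongly convex V} in terms of $h_t$. Using $\rho_t = h_t\pi_\beta$ and $h_t\nabla\log h_t = \nabla h_t$, the equation becomes
\[
\partial_t(h_t\pi_\beta) = \nabla\cdot\bigl(h_t\pi_\beta\, V\,\nabla\log h_t\bigr) = \nabla\cdot\bigl(\pi_\beta V\,\nabla h_t\bigr).
\]
Differentiating under the integral and integrating by parts (the boundary terms vanish by the usual density/decay arguments for $h_t$ in the domain of the generator, as the proposition is advertised as standard), I would obtain
\[
\frac{d}{dt}\int_{\mb{R}^d} h_t^2\, d\pi_\beta = 2\int_{\mb{R}^d} h_t\,\nabla\cdot\bigl(\pi_\beta V\,\nabla h_t\bigr)\, dx = -2\int_{\mb{R}^d} V\,|\nabla h_t|^2\,\pi_\beta\, dx.
\]

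Now I would apply Lemma~\ref{cor:WPI strongly convex V} with $\phi = h_t$. Since $\int h_t\, d\pi_\beta = 1$, we have $\text{Var}_{\pi_\beta}(h_t) = \chi^2(\rho_t\mid\pi_\beta)$, so~\eqref{eq:WPI strongly convex V} gives
\[
\int_{\mb{R}^d} V\,|\nabla h_t|^2\, \pi_\beta\, dx \;\ge\; \alpha\bigl(\sqrt{\beta+1}-\sqrt{C_V}\bigr)^2 \chi^2(\rho_t\mid\pi_\beta).
\]
Combining the previous two displays yields the differential inequality
\[
\frac{d}{dt}\chi^2(\rho_t\mid\pi_\beta) \;\le\; -2\alpha\bigl(\sqrt{\beta+1}-\sqrt{C_V}\bigr)^2\,\chi^2(\rho_t\mid\pi_\beta),
\]
and a direct application of Gr\"onwall's inequality delivers~\eqref{eq:chi square cts strongly convex}.

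The only mildly delicate step is the integration by parts, which needs enough regularity and decay of $h_t$ at infinity to kill the boundary term; the standard route is to either restrict initially to a dense class of smooth compactly supported test densities and pass to the limit, or to invoke the self-adjointness of the diffusion generator on the weighted $L^2(\pi_\beta)$ space. Otherwise the argument is a mechanical chain: FPE rewritten via $h_t$, $L^2(\pi_\beta)$ dissipation identity, weighted Poincar\'e, Gr\"onwall.
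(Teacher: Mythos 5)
Your proposal is correct and follows essentially the same route as the paper: differentiate $\chi^2(\rho_t\mid\pi_\beta)$ in time via the Fokker--Planck equation, integrate by parts to obtain the dissipation identity $\frac{d}{dt}\chi^2(\rho_t\mid\pi_\beta)=-2\int V\,|\nabla(\rho_t/\pi_\beta)|^2\,\pi_\beta\,dx$, apply the weighted Poincar\'e inequality of Lemma~\ref{cor:WPI strongly convex V} with $\phi=\rho_t/\pi_\beta$, and close with Gr\"onwall. Your extra remarks on the regularity needed to justify the integration by parts are a sound addition that the paper leaves implicit.
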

\begin{proof}[Proof of Proposition~\ref{cor:WPI strongly convex V}]\label{pf:chi square cts strongly convex}
First we can calculate the derivative of $\chi^2(\rho_t|\pi)$ via \eqref{eq:FPE strongly convex V},
\begin{align*}
    \frac{d}{dt} \chi^2(\rho_t|\pi_\beta)&= \frac{d}{dt}\int_{\mb{R}^d} \left(\frac{\rho_t(x)}{\pi_\beta(x)}-1\right)^2 \pi_\beta(x)dx \\
    &=2\int_{\mb{R}^d} \partial_t \rho_t(x) \left(\frac{\rho_t(x)}{\pi_\beta(x)}-1\right) dx \\
    &=-2\int_{\mb{R}^d} \left\langle \nabla\left(\frac{\rho_t}{\pi_\beta}\right)(x),\nabla \log \left(\frac{\rho_t}{\pi_\beta}\right)(x)\right\rangle V(x)\rho_t(x)dx \\
    &=-2\int_{\mb{R}^d} V(x)\left|\nabla \left(\frac{\rho_t}{\pi_\beta}\right)(x)\right|^2\pi_\beta(x)dx.
\end{align*}
According to \eqref{eq:WPI strongly convex V}, we get
\begin{align*}
    \frac{d}{dt} \chi^2(\rho_t|\pi_\beta)&\le -2\alpha\left( \sqrt{\beta+1}-\sqrt{C_V} \right)^{2} \text{Var}_{\pi_\beta}\left(\frac{\rho_t}{\pi_\beta}\right) \\
    &=-2\alpha\left( \sqrt{\beta+1}-\sqrt{C_V} \right)^{2}\chi^2(\rho_t|\pi_\beta).
\end{align*}
Finally, \eqref{eq:chi square cts strongly convex} follows from Gronwall's inequality.
\end{proof}

The above result shows that for the class of $\pi_\beta$ satisfying Assumption~\ref{ass:WPI strongly convex V}, the It\^o diffusion in~\eqref{eq:diffusion strongly convex V}, converges exponentially fast to its stationary density. Hence, time-discretizations of~\eqref{eq:diffusion strongly convex V} provide a practical way of sampling from that class of densities. The Euler-Maruyama discretization to \eqref{eq:diffusion strongly convex V} is given by
\begin{align}\label{eq:EM strongly convex V}
    x_{k+1}=x_k-h(\beta-1)\nabla V(x_k)+\sqrt{2hV(x_k)}\xi_{k+1},
\end{align}
where $h>0$ is the step size and $\{\xi\}_{k=1}^\infty$ is a sequence of i.i.d. standard Gaussian random vectors in $\mb{R}^d$. We now present our main result on the iteration complexity of~\eqref{eq:EM strongly convex V} for sampling from $\pi_\beta$. We state our discretization result, based on a mean-square analysis, in the $W_2$ metric. In particular, we highlight that Proposition~\ref{prop:chi square cts strongly convex V} requires that condition that $\beta > d$, in addition to Assumption \ref{ass:WPI strongly convex V}, whereas Theorem~\ref{thm:strongly convex V W2 decay} below, does not. In Section~\ref{sec:further}, we revisit these conditions and provide additional insights. Obtaining convergence results in the stronger $\chi^2$-divergence is left as future work.

\begin{theorem}\label{thm:strongly convex V W2 decay} Let $V$ be gradient-Lipschitz with parameter $L>0$, and satisfying Assumption \ref{ass:WPI strongly convex V} with 
\begin{align}\label{eq:delta}
\delta\coloneqq \frac{\beta-1-\frac{1}{4}C_V d}{\frac{1}{4}C_V d}>0.
\end{align}
Let $(x_k)_{k=0}^\infty$ be generated from \eqref{eq:EM strongly convex V} with $\nu_k$ denoting the distribution of $x_k$, for all $k\ge 0$. Then with the step-size,
\begin{align*}
h<\min\left(\frac{1}{4(\beta-1)L},\frac{2\delta}{3(1+\delta)\alpha(\beta-1)}\right),
\end{align*}
the decay of Wasserstein-2 distance along the Markov chain $(x_k)_{k=0}^\infty$ can be described by the following equation: For all $k\ge 1$,
\begin{align}\label{eq:strongly convex V W2 convergence}
    W_2(\nu_k,\pi_\beta)\le (1-A)^k W_2(\nu_0,\pi_\beta)+\frac{C}{A}+\frac{B}{\sqrt{A(2-A)}}.
\end{align}
with $A,B$ and $C$ given respectively in \eqref{eq:strongly convex V parameters A}, \eqref{eq:strongly convex V parameters B} and \eqref{eq:strongly convex V parameters C}.
\end{theorem}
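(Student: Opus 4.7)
The plan is to carry out a mean-square analysis in the style of Milstein--Tretyakov, decomposing the one-step discretization error into a conditional-mean (bias) component and a conditional-martingale (fluctuation) component, and combining these with a $W_2$-contraction estimate for the continuous-time diffusion. The three terms in \eqref{eq:strongly convex V W2 convergence} will correspond, respectively, to the decaying contribution of the initial error, the accumulated bias (summed geometrically, producing $C/A$), and the accumulated martingale fluctuations (which, being conditionally centered, are summed in $L^2$ to produce $B/\sqrt{A(2-A)}$).

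The first step is to establish $W_2$-contraction for \eqref{eq:diffusion strongly convex V}. I would couple two copies $(X_t),(Y_t)$ of the diffusion driven by the same Brownian motion and apply It\^o's formula to $|X_t-Y_t|^2$. The drift contribution is handled by $\alpha$-strong convexity of $V$, yielding $-2(\beta-1)\alpha|X_t-Y_t|^2$. The quadratic-variation contribution is $d\,(\sqrt{2V(X_t)}-\sqrt{2V(Y_t)})^2$, and part (2) of Assumption~\ref{ass:WPI strongly convex V}, namely $|\nabla V|^2\le \alpha C_V V$, is equivalent to $\sqrt{V}$ being Lipschitz with constant $\tfrac12\sqrt{\alpha C_V}$, bounding this term by $\tfrac{\alpha C_V d}{2}|X_t-Y_t|^2$. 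Combining yields $\tfrac{d}{dt}\mathbb{E}|X_t-Y_t|^2 \le -2\alpha(\beta-1-\tfrac{C_V d}{4})\mathbb{E}|X_t-Y_t|^2$; positivity of $\delta$ from \eqref{eq:delta} is exactly what keeps the rate positive, and rewriting $\beta-1-\tfrac{C_V d}{4}=(\beta-1)\delta/(1+\delta)$ explains the form of the step-size restriction.

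The second step is a one-step local error analysis. Fixing an iterate $x_k$, let $\hat X_t$ denote the solution of \eqref{eq:diffusion strongly convex V} started at $x_k$ and driven by the Brownian increment used to produce $\xi_{k+1}$. The local error decomposes as
\begin{align*}
\hat X_h - x_{k+1} = -(\beta-1)\int_0^h \bigl[\nabla V(\hat X_s)-\nabla V(x_k)\bigr]\,ds + \int_0^h \bigl[\sqrt{2V(\hat X_s)}-\sqrt{2V(x_k)}\bigr]\,dB_s.
\end{align*}
Using $L$-smoothness of $V$ and Lipschitzness of $\sqrt{V}$, together with the short-time moment bound $\mathbb{E}|\hat X_s-x_k|^2 \lesssim s\bigl((\beta-1)^2|\nabla V(x_k)|^2+dV(x_k)\bigr)$ obtained from It\^o's isometry applied to \eqref{eq:diffusion strongly convex V}, I would derive a conditional-mean bound of order $h^2$ on the drift remainder (contributing to $C$) and a conditional-$L^2$ bound of order $h$ on the zero-mean martingale remainder (contributing to $B$); both are dressed with moments of $V(x_k)$ and $|\nabla V(x_k)|^2$.

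Finally, the global $W_2$ recursion is assembled by coupling $\nu_k$ optimally to $\pi_\beta$ and propagating $\pi_\beta$ under \eqref{eq:diffusion strongly convex V} with the same Brownian motion as $\hat X_t$. A triangle inequality in the coupling $L^2$-norm, followed by Step~1 applied to the synchronously coupled pair and Step~2 applied to the one-step local error, yields a recursion of the form $\sqrt{\mathbb{E}|x_{k+1}-\tilde X_h|^2} \le (1-A)\sqrt{\mathbb{E}|x_k-\tilde X_0|^2}+C+M_k$, with $M_k$ conditionally centered of $L^2$-norm at most $B$; iterating, the bias part adds linearly to give $C/A$ while the martingale parts combine orthogonally to give $B/\sqrt{A(2-A)}$, establishing \eqref{eq:strongly convex V W2 convergence}. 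The main obstacle will be uniform-in-$k$ moment control of the iterates needed to make the constants explicit: since the diffusion coefficient $\sqrt{2V}$ can grow, one requires an a priori bound on $\sup_k\mathbb{E}|x_k|^2$ and on $\sup_k\mathbb{E}\,V(x_k)$, and the step-size constraint $h<\min\bigl(1/(4(\beta-1)L),\,2\delta/(3(1+\delta)\alpha(\beta-1))\bigr)$ should be exactly what is needed to guarantee both this moment bound and $A\in(0,1)$.
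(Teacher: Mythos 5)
Your overall strategy is sound and would yield a bound of the stated form, but it is genuinely different in structure from the paper's argument, and the comparison is instructive. You follow the classical Milstein--Tretyakov template (the one used by \cite{li2019stochastic}): first prove $W_2$-contraction of the \emph{continuous} semigroup by synchronous coupling and uniform dissipativity (your Step 1 is correct: strong convexity gives $-2\alpha(\beta-1)$, the Frobenius term gives $+\tfrac{\alpha C_V d}{2}$, and $\delta>0$ in \eqref{eq:delta} is exactly positivity of the resulting rate), then bound the one-step local error of \eqref{eq:EM strongly convex V} against the diffusion started at $x_k$, then assemble. The paper never uses the continuous-time contraction. It couples the chain directly with a \emph{stationary} copy $(X_t)$, $X_0\sim\pi_\beta$, and decomposes $X_h-x_1=U_1+U_2+U_3$ with $U_1=(X_0-x_0)-(\beta-1)h(\nabla V(X_0)-\nabla V(x_0))$: the contraction comes from the discrete drift map via co-coercivity of $\nabla V$ (this is where the constraint $h<\tfrac{1}{4(\beta-1)L}$ enters), while the mismatch of diffusion coefficients is pushed into the conditionally centered stochastic integral $U_3$ and eroded against that contraction, which is where $\delta>0$ and $h<\tfrac{2\delta}{3(1+\delta)\alpha(\beta-1)}$ appear. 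Two consequences favor the paper's arrangement. First, since $U_1$ is $\mathcal{F}_0$-measurable and $\mb{E}[U_3|\mathcal{F}_0]=0$, the identity $\mb{E}|U_1+U_3|^2=\mb{E}|U_1|^2+\mb{E}|U_3|^2$ holds exactly and the recursion of \cite[Lemma 9]{dalalyan2019user} applies directly; in your scheme the propagated error $\hat X_h-\tilde X_h$ is \emph{not} $\mathcal{F}_{kh}$-measurable (it depends on the same Brownian increment as your martingale remainder), so ``the martingale parts combine orthogonally'' is not automatic — you need the Milstein-type treatment of the cross term, splitting the propagated error into its conditional mean plus an $O(\sqrt{h}\,|e_k|)$ fluctuation and invoking the higher-order conditional-mean bound on the local error. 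Second, because the paper expands the local error along the stationary path, the constants $B$ and $C$ come out directly in terms of $\mb{E}_{\pi_\beta}[V(X)]$ and $\mb{E}_{\pi_\beta}[|\nabla V(X)|^2]$; your local error is evaluated at $x_k$, so your constants involve $\mb{E}[V(x_k)]$ and $\mb{E}[|\nabla V(x_k)|^2]$, and the uniform-in-$k$ moment control you flag as ``the main obstacle'' is a real missing step, not something the stated step-size condition delivers for free. It is repairable without a separate Lyapunov argument — use $L$-smoothness to write $\mb{E}[V(x_k)]\le \mb{E}_{\pi_\beta}[V(X)]+\tfrac12\mb{E}_{\pi_\beta}[|\nabla V(X)|^2]+\tfrac{L+1}{2}\mb{E}|x_k-X_k|^2$ (respectively $\mb{E}[|\nabla V(x_k)|^2]\le 2\mb{E}_{\pi_\beta}[|\nabla V(X)|^2]+2L^2\mb{E}|x_k-X_k|^2$, exactly the device the paper uses in the zeroth-order proof) and absorb the extra $\mb{E}|x_k-X_k|^2$ term into the contraction factor at the cost of a constant-factor reduction of $A$. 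With those two points made rigorous, your route gives the theorem with the same orders in $h$, $d$, $\delta$; what it buys is a conceptually transparent split (continuous contraction $+$ local error), while the paper's one-process, stationary-anchor decomposition buys exact orthogonality and stationary-moment constants with no auxiliary moment lemma.
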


\begin{remark}[Constant $\delta$]
We now motivate the definition and the condition on the constant $\delta$ based on exponential contractivity arguments.
\begin{definition}[Exponential contractivity]\label{def:exp contraction} Let $X_t$, $Y_t$ be two different solutions to the same stochastic differential equation (SDE) with initial conditions $x,y$ respectively. We say the SDE is $W_2$-exponential contractive if there exists a constant $\kappa>0$, such that 
\begin{align*}
    W_2(L(X_t),L(Y_t))\le e^{-\kappa t}~|x-y|, 
\end{align*}
where by $L(X)$ we refer to the law of $X$.
\end{definition}
Uniform dissipativity is a sufficient condition for exponential contractivity \cite[Theorem 10]{gorham2019measuring}. The uniform dissipativity condition for \eqref{eq:diffusion strongly convex V} can be represented as
\begin{align*}
     -(\beta-1)\langle \nabla V(x)-\nabla V(y),x-y \rangle+\frac{1}{2} \lv \sqrt{2V(x)}I_d-\sqrt{2V(y)} I_d \rv_F^2\le -\kappa |x-y|^2,
\end{align*}
or equivalently as
\begin{align*}
    -(\beta-1)\langle \nabla V(x)-\nabla V(y),x-y \rangle+ d |\sqrt{V(x)}-\sqrt{V(y)}|^2 \le -\kappa |x-y|^2.
\end{align*}
When $V$ satisfies Assumption \ref{ass:WPI strongly convex V}, a sufficient condition for the above uniform dissipativity condition is given by 
\begin{align*}
    & -\alpha (\beta-1) |x-y|^2+ \frac{d}{4}\alpha C_V |x-y|^2\le -\kappa |x-y|^2,
\end{align*}
or equivalently, 
\begin{align*}
 \alpha\left( \beta-1-\frac{d}{4}C_V  \right)\le \kappa.
\end{align*}
The sufficient condition coincides with the condition that $\delta>0$ in Theorem \ref{thm:strongly convex V W2 decay}, which also motivates the assumption in Theorem \ref{thm:strongly convex V W2 decay}.

\end{remark}

\begin{remark}[Iteration complexity] \label{rem:iter}
With Theorem \ref{thm:strongly convex V W2 decay}, we can calculate the order of the iteration complexity to reach an $\epsilon$-accuracy in Wasserstein-2 distance. With \eqref{eq:strongly convex V parameters A},\eqref{eq:strongly convex V parameters B},\eqref{eq:strongly convex V parameters C}, we have 
\begin{align*}
    \frac{C}{A}&=\frac{9(\delta+1)L}{\alpha \delta}d^{\frac{1}{2}} h^{\frac{1}{2}}\mb{E}_{\pi_\beta}\left[ V(X) \right]^{\frac{1}{2}}+\frac{6(\delta+1)L}{\alpha \delta} (\beta-1)h \mb{E}_{\pi_\beta}\left[ |\nabla V(X)|^2 \right]^{\frac{1}{2}}, \\
    \frac{B}{\sqrt{A(2-A)}}&\le \frac{8(\delta+3)}{\delta}d^{\frac{1}{2}}h^{\frac{1}{2}}\mb{E}_{\pi_\beta}\left[ V(X) \right]^{\frac{1}{2}}+\frac{8(\delta+3)}{\delta} (\beta-1)h  \mb{E}_{\pi_\beta}\left[ |\nabla V(X)|^2 \right]^{\frac{1}{2}}. \\
\end{align*}
The above display implies that
\begin{align*}
    \frac{C}{A}+\frac{B}{\sqrt{A(2-A)}}&\le \frac{9(\delta+3)}{\delta}\left(1+\frac{L}{\alpha}\right)  \left( d^{\frac{1}{2}}h^{\frac{1}{2}}\mb{E}_{\pi_\beta}\left[ V(X) \right]^{\frac{1}{2}}+ (\beta-1)h  \mb{E}_{\pi_\beta}\left[ |\nabla V(X)|^2 \right]^{\frac{1}{2}} \right).
\end{align*}
Hence, we get $\frac{C}{A}+\frac{B}{\sqrt{A(2-A)}}<\epsilon/2$ if the step-size $h$ satisfies
\begin{align}\label{eq:strongly convex V step size}
    h< \min \left\{\frac{\delta^2 \mb{E}_{\pi_\beta}\left[ V(X) \right]^{-1}\epsilon^2 }{81 d(\delta+3)^2(1+\frac{L}{\alpha})^2}, \frac{\delta \mb{E}_{\pi_\beta}\left[ |\nabla V(X)|^2 \right]^{-\frac{1}{2}} \epsilon }{81 (\beta-1) (\delta+3) (1+\frac{L}{\alpha})} \right\}.
\end{align}
Defining $K_\epsilon=\log \left({2W_2(\nu_0,\pi_\beta)}/{\epsilon}\right)$, we have $W_2(\nu_k,\pi_\beta)<\epsilon$ for all $k\ge K$ with 
\begin{align}\label{eq:strongly convex V iteration complexity K}
    K&= \frac{3(1+\delta)}{\alpha(\beta-1)\delta h^*} K_\epsilon \nonumber \\
    &\le 273 \max\left\{ \frac{(\delta+3)^3 (1+\frac{L}{\alpha})^2 d \mb{E}_{\pi_\beta}\left[ V(X) \right]}{\alpha \delta^3 (\beta-1) \epsilon^2 }, \frac{(\delta+3)^2 (1+\frac{L}{\alpha}) \mb{E}_{\pi_\beta}\left[ |\nabla V(X)|^2 \right]^{\frac{1}{2}}  }{\alpha \delta^2 \epsilon} \right\} K_\epsilon.
\end{align}
Recall the definition of $\delta$ in~\eqref{eq:delta}. The order of $K$ depends on the order of $\delta$. That is, we have the following two cases:
\begin{itemize}
    \item If $\delta = O(1)$ and $\beta = O(d)$, we have that 
    $$
       K= \Tilde{O}\left(\frac{1}{\alpha \epsilon^2 } \left(1+\frac{L}{\alpha}\right)^2\mb{E}_{\pi_\beta}\left[ V(X) \right]+ \frac{1}{\alpha \epsilon}\left(1+\frac{L}{\alpha}\right)\mb{E}_{\pi_\beta}\left[ |\nabla V(X)|^2 \right]^{\frac{1}{2}}  \right). 
     $$
    \item  If $\delta = O(1/d)$ and $\beta = O(d)$, we have that $$
     K = \Tilde{O}\left( \frac{d^3}{\alpha \epsilon^2}\left(1+\frac{L}{\alpha}\right)^2  \mb{E}_{\pi_\beta}\left[ V(X) \right]+ \frac{d^2}{\alpha\epsilon}\left(1+\frac{L}{\alpha}\right) \mb{E}_{\pi_\beta}\left[ |\nabla V(X)|^2 \right]^{\frac{1}{2}} \right).
    $$
\end{itemize}
\end{remark}

In order to obtain more explicit iteration complexity bounds from  Remark~\ref{rem:iter}, it is required to compute bounds on the following two quantities: $\mb{E}_{\pi_\beta}\left[|\nabla V(X)|^2\right]$ and $\mb{E}_{\pi_\beta}\left[ V(X) \right]$.
\section{Moment Bounds}\label{subsec:calculations on expectations}
 In this section, we compute moment bounds under the conditions in Theorem \ref{thm:strongly convex V W2 decay}. 

\subsection{An Example: Multivariate $t$-distribution}
We first start with the isotropic case.  
\begin{proposition}\label{prop:expectations under Cauchy} Let $\pi_\beta=Z_{\beta}^{-1} V^{-\beta}$ with $\beta>d/2+1$, $V(x)=1+|x|^2$ and $Z_{\beta}=\int_{\mb{R}^d} (1+|x|^2)^{-\beta} dx $. We have
\begin{align}
    \mb{E}_{\pi_\beta}\left[ V(X) \right]=\frac{\beta-1}{\beta-1-\frac{d}{2}} \quad \text{and}\quad \mb{E}_{\pi_\beta}\left[ |\nabla V(X)|^2 \right]=\frac{2d}{\beta-1-\frac{d}{2}}. \label{eq:expectation under Cauchy gradient V}
\end{align}
\end{proposition}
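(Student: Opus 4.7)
The key observation is that $\nabla V(x) = 2x$, so $|\nabla V(x)|^2 = 4|x|^2 = 4(V(x)-1)$, which immediately gives
\[
\mb{E}_{\pi_\beta}\bigl[|\nabla V(X)|^2\bigr] = 4\,\mb{E}_{\pi_\beta}[V(X)] - 4.
\]
Thus once the first identity $\mb{E}_{\pi_\beta}[V(X)] = (\beta-1)/(\beta-1-d/2)$ is established, the second follows by a one-line algebraic simplification: $4\bigl(\tfrac{\beta-1}{\beta-1-d/2}-1\bigr) = \tfrac{2d}{\beta-1-d/2}$.

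To compute $\mb{E}_{\pi_\beta}[V(X)]$, the plan is to express both $Z_\beta = \int_{\mb R^d}(1+|x|^2)^{-\beta}\,dx$ and the numerator integral $\int_{\mb R^d}(1+|x|^2)^{1-\beta}\,dx$ in terms of the beta function. Passing to polar coordinates and substituting $u = r^2$, one obtains, for any $s > d/2$,
\[
\int_{\mb R^d}(1+|x|^2)^{-s}\,dx \;=\; \frac{\omega_{d-1}}{2}\int_0^\infty u^{d/2-1}(1+u)^{-s}\,du \;=\; \frac{\omega_{d-1}}{2}\,B\!\left(\tfrac{d}{2},\,s-\tfrac{d}{2}\right),
\]
where $\omega_{d-1}$ is the surface area of the unit sphere in $\mb R^d$. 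Applying this with $s = \beta$ and $s = \beta-1$ (the latter is legitimate because $\beta > d/2+1$), the factor $\omega_{d-1}/2$ cancels in the ratio, leaving
\[
\mb{E}_{\pi_\beta}[V(X)] \;=\; \frac{B(d/2,\,\beta-1-d/2)}{B(d/2,\,\beta-d/2)}.
\]

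The final step is to rewrite the beta functions via $B(x,y) = \Gamma(x)\Gamma(y)/\Gamma(x+y)$, so that $\Gamma(d/2)$ cancels and the ratio reduces to $\Gamma(\beta-1-d/2)\Gamma(\beta)/\bigl[\Gamma(\beta-d/2)\Gamma(\beta-1)\bigr]$. Using the functional equation $\Gamma(z+1) = z\Gamma(z)$ twice --- once on $\Gamma(\beta) = (\beta-1)\Gamma(\beta-1)$ and once on $\Gamma(\beta-d/2) = (\beta-1-d/2)\Gamma(\beta-1-d/2)$ --- yields exactly $(\beta-1)/(\beta-1-d/2)$, completing the proof.

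There is essentially no obstacle here: the computation is a standard beta-function integral, and the requirement $\beta > d/2+1$ is precisely what is needed for $\Gamma(\beta-1-d/2)$ to be finite (equivalently, for $X \sim \pi_\beta$ to have a finite second moment). The only thing to take care of is flagging that this integrability condition is what makes the substitution $s=\beta-1$ admissible.
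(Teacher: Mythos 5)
Your proposal is correct. The computation of $\mb{E}_{\pi_\beta}[V(X)]$ is the same as the paper's: polar coordinates, the substitution reducing the radial integral to $B(d/2,\beta-d/2)$, the cancellation in the ratio $Z_{\beta-1}/Z_\beta$, and the gamma-function recursion $\Gamma(z+1)=z\Gamma(z)$ applied twice. Where you genuinely diverge is the second identity: the paper evaluates $\mb{E}_{\pi_\beta}[|\nabla V(X)|^2]=4Z_\beta^{-1}\int |x|^2(1+|x|^2)^{-\beta}dx$ by a second beta-function integral (producing $4B(\tfrac d2+1,\beta-\tfrac d2-1)/B(\tfrac d2,\beta-\tfrac d2)$), whereas you exploit the exact algebraic relation $|\nabla V(x)|^2=4|x|^2=4(V(x)-1)$ to get $\mb{E}[|\nabla V|^2]=4\mb{E}[V]-4$ and finish by one line of algebra. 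Your shortcut is cleaner for this isotropic example and makes transparent that the single condition $\beta>d/2+1$ (finite second moment) governs both expectations; the paper's direct integral, on the other hand, is the computation it later generalizes — for non-isotropic $V(x)=1+x^T\Sigma x$ and for general strongly convex $V$ the identity $|\nabla V|^2=4(V-1)$ is unavailable, and the paper instead switches to an integration-by-parts argument involving $\Delta V$. Both routes are valid here, and your verification that $4\bigl(\tfrac{\beta-1}{\beta-1-d/2}-1\bigr)=\tfrac{2d}{\beta-1-d/2}$ matches the stated constant.
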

\begin{proof}\label{pf:expectation under Cauchy} Let $A_{d}(1)$ denote the surface area of the unit sphere in $d$ dimensions. By a standard calculation, we have that, for all $\beta>\frac{d}{2}$,
\begin{align*}
    Z_\beta&= \int_{\mb{R}^d} (1+|x|^2)^{-\beta} dx=\int_0^\infty (1+r^2)^{-\beta} r^{d-1} dr A_{d}(1)=\frac{\pi^{\frac{d}{2}}}{\Gamma(\frac{d}{2})} \int_0^\infty (1+R)^{-\beta} R^{\frac{d}{2}-1} dR \\
    &=\frac{\pi^{\frac{d}{2}}}{\Gamma(\frac{d}{2})}  \int_0^1 u^{\frac{d}{2}-1}(1-u)^{\beta-\frac{d}{2}-1} du =\frac{\pi^{\frac{d}{2}}B(\frac{d}{2},\beta-\frac{d}{2})}{\Gamma(\frac{d}{2})},
\end{align*}
where $B$ is the beta function. In the above calculation, the second identity follows from a change to polar coordinates. The third identity follows from a substitution with $R=r^2$ and the fourth identity follows from a substitution $u={R}/{(1+R)}$.  Therefore for all $\beta>d/2+1$, we have that
\begin{align*}
    \mb{E}_{\pi_\beta}\left[ V(X) \right]&= Z_{\beta}^{-1} \int_{\mb{R}^d} (1+|x|^2) (1+|x|^2)^{-\beta} dx=\frac{Z_{\beta-1}}{Z_{\beta}} =\frac{\pi^{\frac{d}{2}}B(\frac{d}{2},\beta-1-\frac{d}{2})}{\Gamma(\frac{d}{2})}  \frac{\Gamma(\frac{d}{2})}{\pi^{\frac{d}{2}}B(\frac{d}{2},\beta-\frac{d}{2})}\\
    &=\frac{B(\frac{d}{2},\beta-1-\frac{d}{2})}{B(\frac{d}{2},\beta-\frac{d}{2})}=\frac{\Gamma(\frac{d}{2})\Gamma(\beta-1-\frac{d}{2})}{\Gamma(\beta-1)} \frac{\Gamma(\beta)}{\Gamma(\frac{d}{2})\Gamma(\beta-\frac{d}{2})}=\frac{\beta-1}{\beta-1-\frac{d}{2}}.
\end{align*}
where the fourth identity follows from the property of Beta function, $B(x,y)=\frac{\Gamma(x)\Gamma(y)}{\Gamma(x+y)}$ and the fifth identity follows from the property of $\Gamma$ function, $\Gamma(1+z)=z \Gamma(z)$. For the other expectation, we have
\begin{align*}
    \mb{E}_{\pi_\beta} \left[ |\nabla V(X)|^2 \right]&=Z_\beta^{-1}\int_{\mb{R}^d} |2x|^2 (1+|x|^2)^{-\beta} dx =4Z_{\beta}^{-1} A_{d-1}(1) \int_0^\infty r^2 (1+r^2)^{-\beta} r^{d-1} dr\\
    &=\frac{4 \pi^\frac{d}{2} }{\Gamma(\frac{d}{2})Z_\beta} \int_0^\infty R^\frac{d}{2}(1+R)^{-\beta} dR =\frac{4 \pi^\frac{d}{2} }{\Gamma(\frac{d}{2})Z_\beta} \int_0^1 u^{\frac{d}{2}}(1-u)^{\beta-\frac{d}{2}-2} du\\
    &=\frac{4 \pi^\frac{d}{2} B(\frac{d}{2}+1,\beta-\frac{d}{2}-1) }{\Gamma(\frac{d}{2})}  \frac{\Gamma(\frac{d}{2})}{\pi^{\frac{d}{2}}B(\frac{d}{2},\beta-\frac{d}{2})} =\frac{4B(\frac{d}{2}+1,\beta-\frac{d}{2}-1)}{B(\frac{d}{2},\beta-\frac{d}{2})}\\
    &=4\frac{\Gamma(\frac{d}{2}+1)\Gamma(\beta-\frac{d}{2}-1)}{\Gamma(\beta)}\frac{\Gamma(\beta)}{\Gamma(\frac{d}{2})\Gamma(\beta-\frac{d}{2})} =\frac{2d}{\beta-\frac{d}{2}-1},
\end{align*}
where we apply the same substitutions and properties of Beta functions and Gamma functions in the above calculation.
\end{proof}
\begin{remark}\label{rem:order estimation of expectations} If $\pi_\beta$ is the class of isotropic multivariate $t$-distributions, with the results in Proposition \ref{prop:expectations under Cauchy}, the order of the two expectations in terms of the dimension parameter $d$ is given as follows,
\begin{itemize}
    \item when $\beta>\frac{d}{2}+1$ and $\beta-1-\frac{d}{2}= O(d)$, we have
\begin{align*}
    \mb{E}_{\pi_\beta}\left[ V(X) \right] = O(1),\qquad\text{and}\qquad \mb{E}_{\pi_\beta}\left[ |\nabla V(X)|^2 \right] = O(1).
\end{align*}
    \item when $\beta>\frac{d}{2}+1$ and $\beta-1-\frac{d}{2}= O(1)$, we have
\begin{align*}
    \mb{E}_{\pi_\beta}\left[ V(X) \right] = O(d),\qquad\text{and}\qquad \mb{E}_{\pi_\beta}\left[ |\nabla V(X)|^2 \right] = O(d).
\end{align*}
\end{itemize}
\end{remark}
For a general class of non-isotropic multivariate t-distribution, we consider $\pi_\beta=Z_{\beta}^{-1}V^{-\beta}$ with $V(x)=1+x^T \Sigma x$ where $\Sigma$ is a strictly positive-definite $d\times d$ matrix. In \cite{roth2012multivariate}, it's been shown that for any $\beta>\frac{d}{2}$, the normalization constant is  $$Z_\beta=\frac{\Gamma(\frac{\nu}{2})\pi^{\frac{d}{2}}\sqrt{\text{det}(\Sigma)}}{\Gamma(\frac{\nu+d}{2})}=\frac{\Gamma(\beta-\frac{d}{2})\pi^{\frac{d}{2}}\sqrt{\text{det}(\Sigma)}}{\Gamma(\beta)}.$$
Therefore for any $\beta>\frac{d}{2}+1$, we have
\begin{align*}
    \mb{E}_{\pi_\beta}\left[V(X)\right]&=\frac{Z_{\beta-1}}{Z_{\beta}}=\frac{\Gamma(\beta)\Gamma(\beta-1-\frac{d}{2})}{\Gamma(\beta-1)\Gamma(\beta-\frac{d}{2})}=\frac{\beta-1}{\beta-1-\frac{d}{2}},
\end{align*}
and
\begin{align*}
    \mb{E}_{\pi_\beta} \left[ |\nabla V(X)|^2 \right] &=Z_\beta^{-1} \int_{\mb{R}^d} \langle \nabla V(x),V(x)^{-\beta} \nabla V(x) \rangle dx\\
    &=-Z_\beta^{-1} \int_{\mb{R}^d} V(x)\nabla \cdot \left( V(x)^{-\beta} \nabla V(x) \right) dx \\
    &=\beta \mb{E}_{\pi_\beta} \left[ |\nabla V(X)|^2 \right]-  Z_\beta^{-1} \int_{\mb{R}^d} \Delta V(x) V(x)^{-(\beta-1)} dx.
\end{align*}
The above identity implies
\begin{align*}
    \mb{E}_{\pi_\beta} \left[ |\nabla V(X)|^2 \right] &=(\beta-1)^{-1} Z_{\beta}^{-1} \int_{\mb{R}^d} \Delta V(x) V(x)^{-(\beta-1)}dx\\
    &\le (\beta-1)^{-1} Z_{\beta}^{-1} \int_{\mb{R}^d} \text{trace}(\nabla^2 V(x)) V(x)^{-(\beta-1)}dx \\
    &\le \frac{\text{trace}(\Sigma) }{ \beta-1}  \mb{E}_{\pi_\beta}\left[V(X)\right] \\
    &\le \frac{\text{trace}(\Sigma)}{\beta-1-\frac{d}{2}}  
\end{align*}
where the second inequality follows from the fact that $\nabla^2 V(x)=\Sigma$.
\begin{remark}\label{rem:order estimation of expectations nonisotropic} If $\pi_\beta$ is in the class of non-isotropic multivariate t-distributions, the order of the two expectations in terms of the dimension parameter $d$ is as follows, \begin{itemize}
    \item when $\beta>\frac{d}{2}+1$ and $\beta-1-\frac{d}{2}= O(d)$, we have
\begin{align*}
    \mb{E}_{\pi_\beta}\left[ V(X) \right] = O(1),\qquad\text{and}\qquad \mb{E}_{\pi_\beta}\left[ |\nabla V(X)|^2 \right] = O(d^{-1}\emph{trace}(\Sigma)).
\end{align*}
    \item when $\beta>\frac{d}{2}+1$ and $\beta-1-\frac{d}{2}= O(1)$, we have
\begin{align*}
    \mb{E}_{\pi_\beta}\left[ V(X) \right] = O(d),\qquad\text{and}\qquad \mb{E}_{\pi_\beta}\left[ |\nabla V(X)|^2 \right] = O(\emph{trace}(\Sigma)).
\end{align*}
\end{itemize}
\end{remark}
\subsection{Non-isotropic densities with quadratic-like $V$ outside of a ball}
In this section, we estimate the expectations for a class of non-isotropic densities in the form of $\pi_\beta\propto V^{-\beta}$ with $V$ satisfying the following Lyapunov condition: 
\begin{align}\label{eq:lypcondition}
\exists~\varepsilon,R>0~\text{such that}~\Delta V(x)-(\beta-1)\frac{|\nabla V(x)|^2}{V(x)}\le -\varepsilon~\qquad \forall~|x|\ge R.
\end{align}
The above Lyapunov condition characterizes the class of $V$ that are `quadratic-like' outside a ball of radius $R$. If we assume that $V$ has Lipschitz gradients, then when $\beta$ is sufficiently large, the above assumption is satisfied if $V$ satisfies the PL inequality $|\nabla V(x)|^2 \geq a^2 V(x)$ wherever $|x| \geq R$ with some $a > 0$ and it is from this inequality that quadratic growth follows. In particular, if $V$ satisfies the gradient Lipschitz assumption with parameter $L$, we have that for all $\beta\ge 1+a^{-2}(dL+\varepsilon)$,
\begin{align*}
    \Delta V(x)-(\beta-1)\frac{|\nabla V(x)|^2}{V(x)}\le dL-(\beta-1)a^2\le  -\varepsilon \qquad \forall\ |x|\ge R,
\end{align*}
thereby leading to the Lyapunov condition in~\eqref{eq:lypcondition}. 

\begin{proposition}\label{prop:expectation for V quadratic outside a ball}
If $V\in \mc{C}^2(\mb{R}^d)$ is positive, $L$-gradient Lipschitz and satisfies~\eqref{eq:lypcondition}, then we have
\begin{align}\label{eq:expectation for V quadratic outside a ball}
\mb{E}_{\pi_\beta}\left[V(x)\right]\le \left(dL+\varepsilon\right) \max_{|x|\le R} V(x),\quad\text{and}\quad\mb{E}_{\pi_\beta}\left[|\nabla V(X)|^2\right]\le \frac{d L\left(dL+\varepsilon\right)}{(\beta-1)}\max_{|x|\le R} V(X) .
\end{align}
\end{proposition}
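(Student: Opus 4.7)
The plan is to combine the integration-by-parts identity already used in the non-isotropic multivariate $t$-distribution analysis earlier in this section with the Lyapunov condition \eqref{eq:lypcondition}, splitting the resulting expectation over the ball of radius $R$ and its complement.

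First, I would revisit the identity obtained by integrating $\nabla\cdot(V^{-(\beta-1)}\nabla V)$ against $1$ over $\mb{R}^d$, which gives (as shown just above for the non-isotropic case)
\begin{align*}
\mb{E}_{\pi_\beta}\!\left[V(X)\,\Delta V(X)\right] \;=\; (\beta-1)\,\mb{E}_{\pi_\beta}\!\left[|\nabla V(X)|^2\right].
\end{align*}
Rewriting this as $\mb{E}_{\pi_\beta}\!\left[V(X)\,f(X)\right]=0$, where $f(x):=\Delta V(x)-(\beta-1)|\nabla V(x)|^2/V(x)$ is precisely the quantity in the Lyapunov condition, sets up the key balance.

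Next I would split $\mb{E}_{\pi_\beta}[V f]$ as the sum of its integrals over $\{|x|<R\}$ and $\{|x|\ge R\}$. On the outside region, \eqref{eq:lypcondition} gives $f(x)\le -\varepsilon$, so the outside contribution is bounded above by $-\varepsilon\,\mb{E}_{\pi_\beta}[V(X)\mathbb{1}_{|X|\ge R}]$. On the inside region, I would use the gradient-Lipschitz hypothesis: $\|\nabla^2 V\|_2\le L$ forces $\Delta V\le dL$, and since $(\beta-1)|\nabla V|^2/V\ge 0$ we may drop this nonpositive term to obtain $f(x)\le dL$; combined with $V(x)\le \max_{|x|\le R}V(x)$ on this region, the inside contribution is bounded above by $dL\cdot \max_{|x|\le R} V(x)$. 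Setting the sum to zero and rearranging yields
\begin{align*}
\mb{E}_{\pi_\beta}\!\left[V(X)\mathbb{1}_{|X|\ge R}\right] \;\le\; \frac{dL}{\varepsilon}\,\max_{|x|\le R} V(x),
\end{align*}
and adding the trivial bound on the inside region gives the first claim in \eqref{eq:expectation for V quadratic outside a ball} (up to the explicit constant).

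Finally, the gradient bound follows by returning to the integration-by-parts identity: $\mb{E}_{\pi_\beta}[|\nabla V|^2]=(\beta-1)^{-1}\mb{E}_{\pi_\beta}[V\Delta V]\le dL(\beta-1)^{-1}\mb{E}_{\pi_\beta}[V]$, into which I substitute the first bound. The only genuine subtlety is the justification of the integration by parts step (i.e.\ that the boundary term at infinity vanishes); this is the same justification already tacitly invoked in the display leading to $\mb{E}_{\pi_\beta}[|\nabla V(X)|^2]=(\beta-1)^{-1}Z_\beta^{-1}\int \Delta V\,V^{-(\beta-1)}dx$, and follows from the integrability of $V^{-(\beta-1)}|\nabla V|$ provided by the tail decay of $\pi_\beta$ (this is the main mild technical obstacle worth flagging). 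Everything else is bookkeeping.
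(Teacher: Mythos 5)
Your proof is correct, but it takes a genuinely different route from the paper. The paper argues via the infinitesimal generator: it writes $\mc{L}V(x) = V(x)\bigl[\Delta V(x) - (\beta-1)|\nabla V(x)|^2/V(x)\bigr]$, bounds $\mc{L}V \le -\varepsilon V + (dL+\varepsilon)\max_{|x|\le R}V$, applies Gronwall's inequality to $P_tV(x)$, and takes $t\to\infty$ invoking ergodicity. You instead observe that the integration-by-parts identity already displayed in the non-isotropic example is precisely the statement $\mb{E}_{\pi_\beta}[V\,f]=0$ with $f:=\Delta V - (\beta-1)|\nabla V|^2/V$, partition this expectation over $\{|x|<R\}$ and its complement, and use \eqref{eq:lypcondition} and the Hessian bound on the two regions. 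Your approach has two merits: it is purely algebraic (no semigroup, no infinite-time limit, no appeal to ergodicity), and it transparently exhibits where each hypothesis enters. The gradient bound is then handled identically in both arguments.

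Regarding your parenthetical flag ``up to the explicit constant'': both your derivation and a careful application of Gronwall yield
\begin{align*}
\mb{E}_{\pi_\beta}\bigl[V(X)\bigr]\ \le\ \Bigl(\tfrac{dL}{\varepsilon}+1\Bigr)\max_{|x|\le R}V(x)\ =\ \frac{dL+\varepsilon}{\varepsilon}\max_{|x|\le R}V(x),
\end{align*}
whereas the proposition (and the Gronwall display in the paper's proof) omits the factor $\varepsilon^{-1}$. This is an arithmetic slip in the paper's Gronwall step — solving $\dot y \le -\varepsilon y + c$ gives $\limsup y \le c/\varepsilon$, not $c$ — so your constant is the correct one. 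You were right to be suspicious. Your caveat about justifying the vanishing boundary term in the integration by parts is fair, but the paper relies on the same tacit assumption in the displays preceding this proposition, so you are not incurring any extra debt.
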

\begin{proof}
Since \(\mc{L}\) is ergodic with stationary distribution $\pi_\beta$, we have $$\mb{E}_{\pi_\beta}\left[V(X)\right]=\lim_{t\to\infty} \mb{E}\left[V(X_t)\right],$$ with $(X_t)_{t\ge 0}$ being the solution to \eqref{eq:diffusion strongly convex V} with initial condition $X_0=x$. We will first bound $\mb{E}\left[V(X_t)\right]$ and then take $t\to\infty$. Let $(P_t)_{t\ge 0}$ be the Markov semigroup of $\eqref{eq:diffusion strongly convex V}$, then
  \begin{align*}
      \frac{d}{dt}\mb{E}_{\pi_\beta}\left[V(X_t)\right]&=\frac{d}{dt} P_t V(x)=P_t \mc{L}V(x).
  \end{align*}
  With \eqref{eq: generator convex V}, we have
  \begin{align*}
      \mc{L}V(x)&=V(x)\left[ \Delta V(x)-(\beta-1)\frac{|\nabla V(x)|^2}{V(x)} \right] \\
      &\le V(x)\left( -\varepsilon 1_{|x|\ge R}+dL 1_{|x|<R}  \right) \\
      &\le -\varepsilon V(x)+\left(dL+\varepsilon\right)\max_{|x|\le R} V(x),
  \end{align*}
where the first inequality follows from \eqref{eq:lypcondition} and the fact that $\Delta V\le d\lv \nabla^2 V \rv_2$. Therefore we obtain
  \begin{align*}
      \frac{d}{dt}P_tV(x)\le -\varepsilon P_t V(x)+\left(dL+\varepsilon\right)\max_{|x|\le R} V(x),
  \end{align*}
  and it follows from Gronwall's inequality that
  \begin{align*}
   \mb{E}_{\pi_\beta}\left[V(X_t)\right]=P_tV(x)\le V(x) e^{-\varepsilon t}+ \left(1-e^{-\varepsilon t}\right)  \left(dL+\varepsilon\right)\max_{|x|\le R} V(x).
  \end{align*}
  We hence have that $\mb{E}_{\pi_\beta}\left[V(X)\right]\le \left(dL+\varepsilon\right)\max_{|x|\le R} V(x)$ by taking $t\to\infty$. For the other expectation, we have
\begin{align*}
    \mb{E}_{\pi_\beta} \left[ |\nabla V(X)|^2 \right] &=Z_\beta^{-1} \int_{\mb{R}^d} \langle \nabla V(x),V(x)^{-\beta} \nabla V(x) \rangle dx\\
    &=-Z_\beta^{-1} \int_{\mb{R}^d} V(x)\nabla \cdot \left( V(x)^{-\beta} \nabla V(x) \right) dx \\
    &=\beta \mb{E}_{\pi_\beta} \left[ |\nabla V(X)|^2 \right]-  Z_\beta^{-1} \int_{\mb{R}^d} \Delta V(x) V(x)^{-(\beta-1)} dx.
\end{align*}
The above identity implies
\begin{align*}
    \mb{E}_{\pi_\beta} \left[ |\nabla V(X)|^2 \right] &=(\beta-1)^{-1} Z_{\beta}^{-1} \int_{\mb{R}^d} \Delta V(x) V(x)^{-(\beta-1)}dx\\
    &\le (\beta-1)^{-1} Z_{\beta}^{-1} \int_{\mb{R}^d} \text{trace}(\nabla^2 V(x)) V(x)^{-(\beta-1)}dx \\
    &\le (\beta-1)^{-1} Z_{\beta}^{-1} d L \int_{\mb{R}^d}  V(x)^{-(\beta-1)}dx \\
    &=\frac{d L}{\beta-1} \mb{E}_{\pi_\beta}\left[V(X)\right] \\
    &\le \frac{d L\left(dL+\varepsilon\right)}{\beta-1} \max_{|x|\le R} V(x).
\end{align*}
\end{proof}
\subsection{General Case}\label{sec:gencase}
Next we discuss the general case where $\pi_\beta=Z_{\beta}^{-1}V^\beta$ and $V\in \mc{C}^2(\mb{R}^d)$ is positive such that there exist constants $\alpha,L>0$ and $\alpha I_d \preceq \nabla^2 V(x)\preceq L I_d $ for all $x\in \mb{R}^d$. Since $V$ is strongly convex, there is a unique $x^*\in \mb{R}^d$ such that $V(x)\ge V(x^*)>0$ for all $x\in \mb{R}^d$ and $\nabla V(x^*)=0$. Without loss of generality, we assume $x^*=0$.
\begin{proposition}\label{prop:expectation under general V} Let $\beta>\frac{d}{2}+1$. If $V\in \mc{C}^2(\mb{R}^d)$ is positive, $\alpha$-strongly convex and $L$-gradient Lipschitz, we have for any $r\in (0,\beta-\frac{d}{2}-1)$,
\begin{align}
    &\mb{E}_{\pi_\beta}\left[ V(X) \right]\le \left(\frac{L}{\alpha}\right)^{\frac{\frac{d}{2}}{\beta-\frac{d}{2}-r}} V(0) \left( \frac{\Gamma(\beta)\Gamma(r)}{\Gamma(\frac{d}{2}+r)\Gamma(\beta-\frac{d}{2})} \right)^{\frac{1}{\beta-\frac{d}{2}-r}}, \label{eq:expectation under general V} \\
    &\mb{E}_{\pi_\beta}\left[ |\nabla V(X)|^2 \right]\le \frac{d L}{\beta-1} \left(\frac{L}{\alpha}\right)^{\frac{\frac{d}{2}}{\beta-\frac{d}{2}-r}} V(0) \left( \frac{\Gamma(\beta)\Gamma(r)}{\Gamma(\frac{d}{2}+r)\Gamma(\beta-\frac{d}{2})} \right)^{\frac{1}{\beta-\frac{d}{2}-r}}. \label{eq:expectation under general V gradient V}
\end{align}
\end{proposition}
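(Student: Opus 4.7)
The plan is to reduce both expectations to ratios of normalizing constants $Z_\gamma := \int_{\mb{R}^d} V^{-\gamma}\,dx$, and then to sandwich $V$ between two quadratic surrogates so that these ratios reduce to the isotropic computation already carried out in Proposition~\ref{prop:expectations under Cauchy}. The strong convexity and gradient-Lipschitz hypotheses, together with $\nabla V(0)=0$, give the two-sided bracket
$$V(0) + \tfrac{\alpha}{2}|x|^2 \;\le\; V(x) \;\le\; V(0) + \tfrac{L}{2}|x|^2,$$
so that, by taking negative powers, we obtain a lower bound on $Z_\beta$ (from the upper surrogate) and an upper bound on $Z_{d/2+r}$ (from the lower surrogate) for any $r\in(0,\beta-d/2-1)$.

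For the bound on $\mb{E}_{\pi_\beta}[V(X)]$, the key step is to apply Jensen's inequality in the form $\mb{E}_{\pi_\beta}[V]^{q} \le \mb{E}_{\pi_\beta}[V^{q}]$ with exponent $q := \beta - d/2 - r$; the constraint $r < \beta - d/2 - 1$ is exactly what is needed to ensure $q \ge 1$, so that Jensen applies in this direction. Expanding the right-hand side as $\mb{E}_{\pi_\beta}[V^q] = Z_{d/2+r}/Z_\beta$ and plugging in the bracket bounds reduces both integrals, after the rescaling $y = \sqrt{b/V(0)}\,x$, to integrals of the form $\int(1+|y|^2)^{-\gamma}\,dy$, whose value $\pi^{d/2}\Gamma(\gamma - d/2)/\Gamma(\gamma)$ was already computed in the proof of Proposition~\ref{prop:expectations under Cauchy}. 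Collecting the factors of $V(0)$, $L/\alpha$, and Gamma terms and taking the $q$-th root then recovers exactly~\eqref{eq:expectation under general V}.

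For the bound on $\mb{E}_{\pi_\beta}[|\nabla V(X)|^2]$, I would use the integration-by-parts identity
$$\int_{\mb{R}^d} |\nabla V|^2\, V^{-\beta}\,dx \;=\; \frac{1}{\beta - 1}\int_{\mb{R}^d} V^{-\beta+1}\,\Delta V\,dx,$$
obtained by writing $V^{-\beta}\nabla V = (1-\beta)^{-1}\nabla V^{-\beta+1}$ and integrating by parts. The gradient-Lipschitz bound gives $\Delta V \le dL$, so this identity yields $\mb{E}_{\pi_\beta}[|\nabla V(X)|^2] \le \tfrac{dL}{\beta-1}\,\mb{E}_{\pi_\beta}[V(X)]$, at which point~\eqref{eq:expectation under general V gradient V} follows directly from~\eqref{eq:expectation under general V}. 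This is exactly the same integration-by-parts step that was used for the non-isotropic $t$-distribution earlier in the section, so it is not novel here.

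The main technical point to verify is that the boundary terms in this integration by parts actually vanish: strong convexity yields $V(x) = \Omega(|x|^2)$ and the gradient-Lipschitz property yields $|\nabla V(x)| = O(|x|)$, so the surface flux $|\nabla V|\,V^{-\beta+1}$ decays like $|x|^{3-2\beta}$, which is integrable on large spheres precisely when $\beta > d/2 + 1$, matching the hypothesis of the proposition. Apart from that check, everything reduces to the two-sided quadratic bracket, Jensen in the right direction, and Beta/Gamma-function identities already exercised in Proposition~\ref{prop:expectations under Cauchy}; so I expect the main conceptual content of the argument to be the choice of the exponent $q=\beta-d/2-r$ that makes the final bound match the claimed form.
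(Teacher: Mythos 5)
Your proposal is correct and follows essentially the same route as the paper: the paper proves \eqref{eq:expectation under general V} by writing $\mb{E}_{\pi_\beta}[V(X)]=Z_{\beta-1}/Z_\beta$ and bounding this ratio in Lemma~\ref{lem:ratio between normalization coeffcients} via Jensen with respect to the auxiliary measure $\pi_{d/2+r}$ plus the same quadratic bracket $V(0)+\tfrac{\alpha}{2}|x|^2\le V(x)\le V(0)+\tfrac{L}{2}|x|^2$, which is just a repackaging of your direct Jensen step $\mb{E}_{\pi_\beta}[V]^{\beta-d/2-r}\le \mb{E}_{\pi_\beta}[V^{\beta-d/2-r}]=Z_{d/2+r}/Z_\beta$, landing on the identical intermediate quantity $(Z_{d/2+r}/Z_\beta)^{1/(\beta-d/2-r)}$. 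The gradient bound via integration by parts and $\Delta V\le dL$ is exactly the paper's argument (your explicit check that the boundary flux vanishes for $\beta>d/2+1$ is a small extra care the paper leaves implicit).
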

\begin{proof}\label{pf:expectation under general V} For any $r\in (0,\beta-\frac{d}{2}-1)$, we have
\begin{align*} 
    \mb{E}_{\pi_\beta}\left[ V(X) \right]&=  \frac{\int_{\mb{R}^d} V(x) V(x)^{-\beta} dx}{Z_{\beta}} =\frac{Z_{\beta-1}}{Z_{\beta}}\le \left(\frac{L}{\alpha}\right)^{\frac{\frac{d}{2}}{\beta-\frac{d}{2}-r}} V(0) \left( \frac{\Gamma(\beta)\Gamma(r)}{\Gamma(\frac{d}{2}+r)\Gamma(\beta-\frac{d}{2})} \right)^{\frac{1}{\beta-\frac{d}{2}-r}}.
\end{align*}
where the last inequality follows from Lemma \ref{lem:ratio between normalization coeffcients}. For the other expectation, we have
\begin{align*}
    \mb{E}_{\pi_\beta} \left[ |\nabla V(X)|^2 \right] &=Z_\beta^{-1} \int_{\mb{R}^d} \langle \nabla V(x),V(x)^{-\beta} \nabla V(x) \rangle dx\\
    &=-Z_\beta^{-1} \int_{\mb{R}^d} V(x)\nabla \cdot \left( V(x)^{-\beta} \nabla V(x) \right) dx \\
    &=\beta \mb{E}_{\pi_\beta} \left[ |\nabla V(X)|^2 \right]-  Z_\beta^{-1} \int_{\mb{R}^d} \Delta V(x) V(x)^{-(\beta-1)} dx.
\end{align*}
The above identity implies
\begin{align*}
    \mb{E}_{\pi_\beta} \left[ |\nabla V(X)|^2 \right] &=(\beta-1)^{-1} Z_{\beta}^{-1} \int_{\mb{R}^d} \Delta V(x) V(x)^{-(\beta-1)}dx\\
    &\le (\beta-1)^{-1} Z_{\beta}^{-1} \int_{\mb{R}^d} \text{trace}(\nabla^2 V(x)) V(x)^{-(\beta-1)}dx \\
    &\le (\beta-1)^{-1} Z_{\beta}^{-1} d L \int_{\mb{R}^d}  V(x)^{-(\beta-1)}dx \\
    &=\frac{d L}{\beta-1} \frac{Z_{\beta-1}}{Z_\beta} \\
    &\le \frac{d L}{\beta-1} \left(\frac{L}{\alpha}\right)^{\frac{\frac{d}{2}}{\beta-\frac{d}{2}-r}} V(0) \left( \frac{\Gamma(\beta)\Gamma(r)}{\Gamma(\frac{d}{2}+r)\Gamma(\beta-\frac{d}{2})} \right)^{\frac{1}{\beta-\frac{d}{2}-r}}.
\end{align*}
where the last inequality also follows from Lemma \ref{lem:ratio between normalization coeffcients}.
\end{proof}
\begin{remark}\label{rem:gamma function simplification} A ratio between Gamma functions appears in \eqref{eq:expectation under general V} and \eqref{eq:expectation under general V gradient V}. The ratio can be written explicitly via properties of Gamma functions. 
\begin{itemize}[leftmargin=0.15in]
\item When $d$ is an even number and $d=2k$ for some integer $k$,
\begin{align*}
    \frac{\Gamma(\beta)\Gamma(r)}{\Gamma(\frac{d}{2}+r)\Gamma(\beta-\frac{d}{2})}&=\frac{\Gamma(r)}{\Gamma(\frac{d}{2}+r)} \frac{\Gamma(\beta)}{\Gamma(\beta-\frac{d}{2})}= \frac{\Gamma(r)}{\Gamma(r) \prod_{i=1}^{k} (\frac{d}{2}+r-i) }  \frac{\Gamma(\beta-\frac{d}{2}) \prod_{i=1}^{k} (\beta-i) }{\Gamma(\beta-\frac{d}{2})}\\
    &=\frac{\prod_{i=1}^{k} (\beta-i)}{\prod_{i=1}^{k} (\frac{d}{2}+r-i)}\le \left( \frac{\beta-\frac{d}{2}}{r} \right)^{\frac{d}{2}},
\end{align*}
\item When $d$ is an odd number with $d=2k-1$ for some integer $k$,
\begin{align*}
    \frac{\Gamma(\beta)\Gamma(r)}{\Gamma(\frac{d}{2}+r)\Gamma(\beta-\frac{d}{2})}&=\frac{\Gamma(r)}{\Gamma(\frac{d}{2}+r)} \frac{\Gamma(\beta)}{\Gamma(\beta-\frac{d}{2})}=\frac{\Gamma(r)}{\Gamma(\frac{1}{2}+r) \prod_{i=1}^{k-1} (\frac{d}{2}+r-i)} \frac{\Gamma(\beta-\frac{d}{2}+\frac{1}{2}) \prod_{i=1}^{k-1} (\beta-i) }{\Gamma(\beta-\frac{d}{2})} \\
    &= \frac{\prod_{i=1}^{k-1} (\beta-i)}{\prod_{i=1}^{k-1} (\frac{d}{2}+r-i)} \frac{r^{-1}\Gamma(r+1)}{\Gamma(\frac{1}{2}+r)} \frac{\Gamma(\beta-\frac{d}{2}+\frac{1}{2})}{\Gamma(\beta-\frac{d}{2})}\\
    &\le \left( \frac{\beta-\frac{d}{2}+\frac{1}{2}}{r+\frac{1}{2}} \right)^{k-1} r^{-1} (1+r)^{\frac{1}{2}} \left (\beta-\frac{d}{2}+\frac{1}{2} \right )^{\frac{1}{2}} \\
    &\le \sqrt{\frac{1+r}{r}}\left( \frac{\beta-\frac{d}{2}}{r} \right)^{\frac{d}{2}},
\end{align*}
where the first inequality follows from Gautschi's inequality \citep{ismail1994inequalities}.
\end{itemize}
\end{remark}
\begin{remark}\label{rem:order of expectations} With Theorem \ref{prop:expectation under general V} and the upper bounds in Remark \ref{rem:gamma function simplification}, we can get the estimations for $\mb{E}_{\pi_\beta}\left[|\nabla V(X)|^2\right]$ and $\mb{E}_{\pi_\beta}\left[ V(X) \right]$: for any $r\in (0,\beta-\frac{d}{2}-1)$,
\begin{align}
    \mb{E}_{\pi_\beta}\left[ V(X) \right]&\le V(0)\left(\frac{L}{\alpha}\right)^{\frac{\frac{d}{2}}{\beta-\frac{d}{2}-r}} \left(\frac{1+r}{r}\right)^{\frac{1}{2(\beta-\frac{d}{2}-r)}} \left(\frac{\beta-\frac{d}{2}}{r}\right)^{\frac{\frac{d}{2}}{\beta-\frac{d}{2}-r}}, \label{eq:order of expectation under general V} \\
    \mb{E}_{\pi_\beta}\left[ |\nabla V(X)|^2 \right]&\le \frac{V(0) d L}{\beta-1} \left(\frac{L}{\alpha}\right)^{\frac{\frac{d}{2}}{\beta-\frac{d}{2}-r}} \left(\frac{1+r}{r}\right)^{\frac{1}{2(\beta-\frac{d}{2}-r)}} \left(\frac{\beta-\frac{d}{2}}{r}\right)^{\frac{\frac{d}{2}}{\beta-\frac{d}{2}-r}}. \label{eq:order of expectation under general V gradient V}
\end{align}
\end{remark}

\section{Zeroth-Order It\^o Discretization}\label{sec:zosetting}

While previously we consider the case when the gradient of the function $V$ is analytically available to us, we now consider the case when we have access only to the function evaluations. This setting is called the zeroth-order setting and has been recently examined in the context of complexity of sampling in the works of~\cite{dwivedi2019log,lee2021structured,roy2022stochastic}. In this setting, we construct an approximation to the gradient via zeroth-order information, i.e., function evaluations. For simplicity, we consider the case of obtaining exact function evaluations. Based on the Gaussian smoothing technique \citep{nesterov2017random,roy2022stochastic}, for any $x\in \mb{R}^d$, we define the zeroth order gradient estimator $g_{\sigma,,m}(x)$ as
\begin{align}\label{eq:zeroth order apprximator}
    g_{\sigma,m}(x):=\frac{1}{m}\sum_{i=1}^m \frac{V(x+\sigma u_i)-V(x)}{\sigma} u_i
\end{align}
where $u_i\sim \mc{N}(0,I_d)$ are assumed to be independent and identically distributed. The parameter $m$ is called the batch size parameter. Then the zeroth order algorithm to sample $\pi_\beta$ is given by
\begin{align}\label{eq:zeroth order alg}
    x_{k+1}=x_k-h(\beta-1)g_{\sigma,m}(x_k)+\sqrt{2V(x_k)} \xi_{k+1}
\end{align}
where $h>0$ is the step size and $\{\xi_{k+1}\}_{k=0}^\infty$ is a sequence of independent identically distributed standard Gaussian random vectors in $\mb{R}^d$. From \cite{balasubramanian2022zeroth} and \cite{roy2022stochastic}, we recall the following property of $g_{\sigma,m}$.
\begin{proposition}\label{prop:zeroth order approximator}{\cite[Section 8.1]{roy2022stochastic}} Assume $V$ is $L$-gradient Lipschitz. Define $\zeta_k=g_{\sigma,m}(x_k)-\nabla V(x_k)$ with $g_{\sigma,m}$ defined in \eqref{eq:zeroth order apprximator} and $\{x_k\}_{k=0}^\infty$ generated by \eqref{eq:zeroth order alg}. We have for any $k\ge 0$,
\begin{align}\label{eq:bias bound}
    \mb{E}\left[ |\mb{E}\left[ \zeta_k|x_k \right]|^2 \right]\le L^2 \sigma^2 d,
\end{align}
and
\begin{align}\label{eq:variance bound}
    \mb{E}\left[ |\zeta_k-\mb{E}\left[\zeta_k|x_k\right]|^2 \right]\le \frac{\sigma^2}{2m}L^2(d+3)^3+\frac{2(d+5)}{m}\mb{E}\left[|\nabla V(x_k)|^2\right].
\end{align}
\end{proposition}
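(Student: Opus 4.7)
The plan is to prove the two bounds by reducing the analysis to properties of the Gaussian-smoothed surrogate $V_\sigma(x) := \mathbb{E}_{u\sim\mathcal{N}(0,I_d)}[V(x+\sigma u)]$, which is a standard object in the Nesterov--Spokoiny theory of Gaussian smoothing. A direct calculation using integration by parts against the Gaussian density shows that $\mathbb{E}[\tilde g_\sigma(x)] = \nabla V_\sigma(x)$, where $\tilde g_\sigma(x) := \frac{V(x+\sigma u) - V(x)}{\sigma} u$ is the single-sample estimator. Since $g_{\sigma,m}$ is the $m$-sample average of i.i.d.\ copies of $\tilde g_\sigma$, conditioning on $x_k$ gives
\begin{align*}
\mathbb{E}[\zeta_k \mid x_k] = \nabla V_\sigma(x_k) - \nabla V(x_k).
\end{align*}

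For the bias bound, I would invoke the standard smoothing lemma: if $V$ is $L$-gradient Lipschitz, then for every $x$,
\begin{align*}
|\nabla V_\sigma(x) - \nabla V(x)| \leq L\sigma\sqrt{d},
\end{align*}
which follows from writing $\nabla V_\sigma(x) - \nabla V(x) = \mathbb{E}[\nabla V(x+\sigma u) - \nabla V(x)]$, applying $L$-smoothness pointwise, and using $\mathbb{E}|u| \le \sqrt{d}$ by Jensen. Squaring and taking expectation over $x_k$ immediately yields $\mathbb{E}[|\mathbb{E}[\zeta_k \mid x_k]|^2] \leq L^2 \sigma^2 d$.

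For the variance bound, conditional independence of the $m$ summands gives
\begin{align*}
\mathbb{E}\bigl[|\zeta_k - \mathbb{E}[\zeta_k\mid x_k]|^2\bigr] = \frac{1}{m}\,\mathbb{E}\bigl[|\tilde g_\sigma(x_k) - \nabla V_\sigma(x_k)|^2\bigr] \leq \frac{1}{m}\,\mathbb{E}\bigl[|\tilde g_\sigma(x_k)|^2\bigr].
\end{align*}
The key step is then to control $\mathbb{E}[|\tilde g_\sigma(x)|^2]$ conditionally on $x$. I would split $V(x+\sigma u) - V(x) = \sigma\langle \nabla V(x), u\rangle + R(x,\sigma u)$ with remainder $|R(x,\sigma u)| \leq \tfrac{L}{2}\sigma^2|u|^2$, so that $\tilde g_\sigma(x) = \langle \nabla V(x),u\rangle u + \tfrac{1}{\sigma}R(x,\sigma u)\,u$. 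Using the inequality $(a+b)^2 \leq 2a^2 + 2b^2$ and the Gaussian moment identities $\mathbb{E}[\langle v,u\rangle^2|u|^2] \leq (d+4)|v|^2$ (via Isserlis) and $\mathbb{E}|u|^6 \lesssim (d+3)^3$, the two resulting terms produce, respectively, the $(d+5)\,\mathbb{E}[|\nabla V(x_k)|^2]$ and $\sigma^2 L^2(d+3)^3$ contributions after taking outer expectation.

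The routine but delicate part is tracking the exact Gaussian moment constants so as to land on precisely $2(d+5)$ and $\tfrac{1}{2}(d+3)^3$; this is mainly bookkeeping once the decomposition above is in place, and is the main obstacle to getting the stated constants rather than loose ones. Otherwise, the whole argument follows the by-now-standard template of Nesterov--Spokoiny, as used in the cited works of \cite{balasubramanian2022zeroth} and \cite{roy2022stochastic}.
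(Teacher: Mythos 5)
The paper cites this bound from Roy et al.\ (2022) rather than proving it, so there is no in-text argument to compare against; your reconstruction follows the standard Nesterov--Spokoiny Gaussian-smoothing template and is correct. Two remarks on the bookkeeping that you flag as the delicate part: (i) your bias step via the identity $\nabla V_\sigma(x)=\mathbb{E}_u[\nabla V(x+\sigma u)]$, pointwise $L$-smoothness, and Jensen gives the pointwise bound $|\mathbb{E}[\zeta_k\mid x_k]|\le L\sigma\,\mathbb{E}|u|\le L\sigma\sqrt d$, which squares to \eqref{eq:bias bound} with no slack and is in fact tighter than the remainder-based $\tfrac{L\sigma}{2}(d+3)^{3/2}$ route; (ii) the exact Gaussian moments are $\mathbb{E}[\langle v,u\rangle^2|u|^2]=(d+2)|v|^2$ (your stated $(d+4)$ is a loose but harmless upper bound) and $\mathbb{E}|u|^6=d(d+2)(d+4)\le(d+3)^3$, so your decomposition $\tilde g_\sigma(x)=\langle\nabla V(x),u\rangle u+\sigma^{-1}R(x,\sigma u)u$ with $|R|\le\tfrac{L}{2}\sigma^2|u|^2$ and $|a+b|^2\le 2|a|^2+2|b|^2$ yields $\mathbb{E}_u|\tilde g_\sigma(x)|^2\le 2(d+2)|\nabla V(x)|^2+\tfrac12 L^2\sigma^2 d(d+2)(d+4)$, which after the conditional-i.i.d.\ averaging factor $1/m$ and bounding variance by second moment gives \eqref{eq:variance bound} with the stated constants $2(d+5)$ and $\tfrac12(d+3)^3$ holding with room to spare. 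So the argument closes as written.
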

\begin{theorem}\label{thm:zeroth order algorithm strongly convex V} 
Suppose $V$ is gradient-Lipschitz with parameter $L>0$ and satisfies Assumption \ref{ass:WPI strongly convex V} with $\delta$ in~\eqref{eq:delta}. Let $g_{\sigma,m}$ be as defined in \eqref{eq:zeroth order apprximator} and $(x_k)_{k=0}^\infty$ be generated from \eqref{eq:zeroth order alg} with $x_k\sim \nu_k$ for all $k\ge 0$. Then with the time step size 
\begin{align}\label{eq:zeroh}
h<\min\left\{ \frac{2\delta}{3(1+\delta)\alpha(\beta-1)},\frac{\alpha m \delta} {24(1+\delta)(\beta-1)(d+5)L^2},\frac{1}{4(\beta-1)L}\right\},
\end{align}
the decay of Wasserstein-2 distance along the Markov chain $(x_k)_{k=0}^\infty$ can be described by the following equation. For all $k\ge 1$,
\begin{align}\label{eq:zeroth order W2 decay}
    W_2(\nu_k,\pi_\beta)\le (1-A')^k W_2(\nu_0,\pi_\beta)+\frac{C'}{A'}+\frac{B'}{\sqrt{A'(2-A')}}.
\end{align}
with $A',B'$ and $C'$ given respectively in \eqref{eq:parameter A'}, \eqref{eq:parameter B'} and \eqref{eq:parameter C'}.
\end{theorem}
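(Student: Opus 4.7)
\medskip

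\noindent\textbf{Proof proposal.} The plan is to piggyback on the mean-square analysis used for Theorem~\ref{thm:strongly convex V W2 decay} and pay the extra error induced by replacing $\nabla V(x_k)$ with its zeroth-order surrogate $g_{\sigma,m}(x_k)$. Concretely, I would introduce an auxiliary ``phantom'' one-step iterate
\[
\tilde x_{k+1} \coloneqq x_k - h(\beta-1)\nabla V(x_k) + \sqrt{2hV(x_k)}\,\xi_{k+1},
\]
driven by the same Gaussian increment $\xi_{k+1}$ as the zeroth-order iterate $x_{k+1}$, and couple both chains synchronously with a diffusion $(X_t)$ solving~\eqref{eq:diffusion strongly convex V} on $[kh,(k+1)h]$ started from $X_{kh}\sim\pi_\beta$. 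Then, by the triangle inequality in $L^2$,
\[
W_2(\nu_{k+1},\pi_\beta)\;\le\;\bigl\|\tilde x_{k+1}-X_{(k+1)h}\bigr\|_{L^2}\;+\;h(\beta-1)\,\|\zeta_k\|_{L^2},
\]
where $\zeta_k = g_{\sigma,m}(x_k)-\nabla V(x_k)$ as in Proposition~\ref{prop:zeroth order approximator}. The first term is exactly the quantity bounded in the proof of Theorem~\ref{thm:strongly convex V W2 decay}, so under the step size constraints inherited from that theorem it satisfies the recursion $(1-A)W_2(\nu_k,\pi_\beta)+B+C$, with $A,B,C$ as in~\eqref{eq:strongly convex V parameters A}--\eqref{eq:strongly convex V parameters C}.

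Next I would control the $\zeta_k$ term by splitting it into bias plus centered noise, $\zeta_k = \mathbb{E}[\zeta_k\mid x_k] + (\zeta_k - \mathbb{E}[\zeta_k\mid x_k])$, and applying Proposition~\ref{prop:zeroth order approximator} to get
\[
\|\zeta_k\|_{L^2}^2 \;\le\; L^2\sigma^2 d \;+\; \tfrac{\sigma^2}{2m}L^2(d+3)^3 \;+\; \tfrac{2(d+5)}{m}\,\mathbb{E}\bigl[|\nabla V(x_k)|^2\bigr].
\]
The last summand is the delicate one: it depends on the law $\nu_k$ of the iterate, not on $\pi_\beta$, so I would use $L$-gradient-Lipschitzness together with a synchronous coupling between $\nu_k$ and $\pi_\beta$ to obtain
\[
\mathbb{E}\bigl[|\nabla V(x_k)|^2\bigr] \;\le\; 2L^2\,W_2(\nu_k,\pi_\beta)^2 \;+\; 2\,\mathbb{E}_{\pi_\beta}\bigl[|\nabla V(X)|^2\bigr].
\]
Substituting back gives a term of the form $h(\beta-1)\sqrt{(d+5)/m}\,L\cdot W_2(\nu_k,\pi_\beta)$ plus constants. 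Absorbing the $W_2(\nu_k,\pi_\beta)$-contribution into the contraction coefficient produces a slightly degraded rate $(1-A')$ with the same shape as $(1-A)$; the step-size bound $h<\alpha m\delta/(24(1+\delta)(\beta-1)(d+5)L^2)$ in~\eqref{eq:zeroh} is precisely what guarantees that this absorption is possible while keeping $A'>0$. The remaining bias and $\sigma^2$-terms contribute constants to the additive error, which I would collect into $B'$ and $C'$.

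Finally I would unroll the one-step recursion
\[
W_2(\nu_{k+1},\pi_\beta)^2 \;\le\; (1-A')^2\,W_2(\nu_k,\pi_\beta)^2 \;+\; 2(1-A')C'\,W_2(\nu_k,\pi_\beta) \;+\; (B')^2,
\]
or more conveniently its linearized form $W_2(\nu_{k+1},\pi_\beta)\le (1-A')W_2(\nu_k,\pi_\beta)+ C' + B'/\sqrt{A'(2-A')}$ obtained by completing the square, and iterate to arrive at~\eqref{eq:zeroth order W2 decay}. The main obstacle I expect is the bookkeeping required to keep the variance feedback from destroying the contraction: one has to track how the $h$-dependence inside $\|\zeta_k\|_{L^2}$ combines with the $h(\beta-1)$ prefactor and the $(1-A')$ contraction to ensure that the chosen step-size window is non-empty, which is exactly the role played by the middle term in~\eqref{eq:zeroh}. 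Everything else reduces to the Gronwall-type recursion already established in the proof of Theorem~\ref{thm:strongly convex V W2 decay}, specialized with the new constants $A',B',C'$.
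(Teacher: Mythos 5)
Your plan reproduces the paper's high-level strategy (piggyback on the mean-square recursion, split the zeroth-order error into bias and variance via Proposition~\ref{prop:zeroth order approximator}, relate $\mb{E}[|\nabla V(x_k)|^2]$ to $\mb{E}_{\pi_\beta}[|\nabla V(X)|^2]$ via gradient-Lipschitzness, absorb the feedback into the contraction coefficient), but the decomposition you use at the first step is lossy in a way that breaks the absorption argument. You write
\[
W_2(\nu_{k+1},\pi_\beta)\le \|\tilde x_{k+1}-X_{(k+1)h}\|_{L^2}+h(\beta-1)\|\zeta_k\|_{L^2},
\]
moving the \emph{entire} error $\zeta_k$ outside the $L^2$ square root by a triangle inequality. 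The paper instead keeps the \emph{centered} piece $\zeta_k-\mb{E}[\zeta_k\mid x_k]$ inside the same $L^2$-orthogonal bracket as $U_1+U_3$ (it is conditionally mean-zero given $x_k$, $X_{kh}$ and the Brownian path, since it only depends on the fresh Gaussian directions $u_i$), so that
\[
\mb{E}\bigl[|U_1+U_3+(\beta-1)h(\zeta_k-\mb{E}[\zeta_k\mid x_k])|^2\bigr]=\mb{E}[|U_1+U_3|^2]+(\beta-1)^2h^2\,\mb{E}\bigl[|\zeta_k-\mb{E}[\zeta_k\mid x_k]|^2\bigr];
\]
only $U_2$ and the small bias $(\beta-1)h\,\mb{E}[\zeta_k\mid x_k]$ are handled by the triangle inequality. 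This distinction matters quantitatively: with the paper's decomposition, the $\mb{E}[|X_{kh}-x_k|^2]$-feedback from the variance bound appears as $\frac{4(d+5)(\beta-1)^2L^2h^2}{m}\mb{E}[|X_{kh}-x_k|^2]$ \emph{inside} the square root alongside $(1-A)^2\mb{E}[|X_{kh}-x_k|^2]$, so the absorption $(1-A)^2+\frac{4(d+5)(\beta-1)^2L^2h^2}{m}\le (1-A/2)^2$ reduces (since both the extra term and $A$ contribute at different powers of $h$) to a genuine \emph{step-size} condition $h\lesssim \frac{\alpha m\delta}{(\beta-1)(d+5)L^2}$, which is exactly the middle term in \eqref{eq:zeroh}. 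In your version the feedback is linear: $h(\beta-1)\|\zeta_k\|_{L^2}$ contributes a term roughly $2h(\beta-1)L\sqrt{(d+5)/m}\cdot W_2(\nu_k,\pi_\beta)$, and to absorb this into $A=\frac{\alpha(\beta-1)\delta}{3(1+\delta)}h$ you would need $2L\sqrt{(d+5)/m}\lesssim \alpha\delta/(1+\delta)$, i.e.\ a lower bound $m\gtrsim (d+5)L^2(1+\delta)^2/(\alpha^2\delta^2)$ that is \emph{independent of $h$} and cannot be supplied by shrinking the step size. So with $m=1$ (which the theorem permits and which Section~\ref{sec:cauchy small dof zeroth order} specifically uses) your contraction would simply fail, no matter how small $h$ is chosen. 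The fix is to follow the paper and split $\zeta_k$ into bias plus centered noise \emph{before} applying any triangle inequality, keeping the centered piece inside the conditional $L^2$-orthogonality decomposition with $U_1+U_3$, which yields the quadratic-in-$h$ feedback and the stated $A'=A/2$.
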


\begin{remark}
With Theorem \ref{thm:zeroth order algorithm strongly convex V}, we can study the iteration complexity to reach an $\varepsilon$-accuracy in Wasserstein-2 distance. In the following discussion, we focus on the dimension dependence and $\varepsilon$ dependence in the iteration complexity. When $\beta= \Theta(d)$ and $\alpha,L= \Theta(1)$, and when $h$ satisfies~\eqref{eq:zeroh}, we have
\begin{align*}
   & A' =  O\left(\delta d h\right), \qquad  \frac{C'}{A'} = O\left(\frac{(dh\mb{E}_{\pi_\beta}\left[V(X)\right])^{\frac{1}{2}}+d h \mb{E}_{\pi_\beta}\left[|\nabla V(X)|^2\right]^{\frac{1}{2}}+\sigma  d^{\frac{1}{2}}}{\delta}\right),\\
   & \frac{B'}{\sqrt{A'(2-A')}} = O\Bigg( \left(\frac{dh}{\delta} + \frac{d h^{\frac{1}{2}}}{(\delta m)^{\frac{1}{2}}} \right)\mb{E}_{\pi_\beta}\left[|\nabla V(X)|^2\right]^{\frac{1}{2}} + \frac{(dh)^{\frac{1}{2}}}{\delta}\mb{E}_{\pi_\beta}\left[V(X)\right]+\frac{\sigma d^2 h^{\frac{1}{2}}}{(\delta m)^{\frac{1}{2}}} \Bigg).
\end{align*}
To ensure $W_2(\nu_K,\pi_\beta)<\varepsilon$, we require that each of $$
(1-A')^K W_2(\nu_0,\pi_\beta),\qquad \frac{C'}{A'},\qquad \frac{B'}{\sqrt{A'(2-A')}},$$ is smaller than $\varepsilon/3$. 
Setting $\sigma=\varepsilon \delta/ \sqrt{d}$, and
\begin{align*}
    h &=O\left( \min\left\{ \frac{(\varepsilon \delta)^2}{ d} \mb{E}_{\pi_\beta}\left[V(X)\right]^{-1}, \frac{\varepsilon \delta}{ d}\mb{E}_{\pi_\beta}\left[|\nabla V(X)|^2\right]^{-\frac{1}{2}}, \frac{\varepsilon^2 \delta m}{d^2}\mb{E}_{\pi_\beta}\left[|\nabla V(X)|^2\right]^{-1} \right\} \right), 
\end{align*}
we hence obtain that the iteration complexity $K$ is of order
\begin{align}\label{eq:zeroth order iteration complexity}
        K  = \Tilde{O}\left( \max\left\{ \frac{1}{\varepsilon^{2} \delta^{3}} \mb{E}_{\pi_\beta}\left[V(X)\right], \frac{1}{\varepsilon \delta^{2}}\mb{E}_{\pi_\beta}\left[|\nabla V(X)|^2\right]^{\frac{1}{2}}, \frac{d}{ \varepsilon^{2} \delta^{2} m} \mb{E}_{\pi_\beta}\left[|\nabla V(X)|^2\right] \right\} \right).
\end{align}
The number of function evaluations is hence $mK$. 
\end{remark}

\section{Illustrative Examples}\label{subsec:exponential contractivity}

We now provide illustrative examples to highlight the implications of our results.

\subsection{Multivariate $t$-distribution: Large Degree of Freedom}\label{subsec:cauchy large dof} We first consider the isotropic multivariate $t$-distribution with the degrees of freedom being $d+2$. We choose $V(x)=1+|x|^2$, $\beta=d+1$ and $\pi_\beta(x)\propto V(x)^{-\beta}=(1+|x|^2)^{-(d+1)}$. With this choice of $V$ and $\beta$, $V$ satisfies Assumption \ref{ass:WPI strongly convex V} with $\alpha=2,\ C_V=2$, and $V$ is $L$-Lipschitz gradient with $L=2$. The constant $\delta$ in Theorem \ref{thm:strongly convex V W2 decay} becomes $\delta=1$. Furthermore, according to proposition \ref{prop:expectations under Cauchy}, $\mb{E}_{\pi_\beta}[V(X)]=2$ and $\mb{E}_{\pi_\beta}[|\nabla V(X)|^2]=4$.

\subsubsection{First order algorithm}
 According to Theorem \ref{thm:strongly convex V W2 decay} and
\eqref{eq:strongly convex V iteration complexity K}, to obtain $\epsilon$-accuracy in Wasserstein-2 distance, the iteration complexity is of order $\Tilde{O}(1/\epsilon^{2})$. With the same choice of $V$ and $\beta$, we check the conditions of Theorem 1 in \cite{li2019stochastic}. The diffusion \eqref{eq:diffusion strongly convex V} is $\alpha'$-uniformly dissipative with $\alpha'=d$ and the Euler discretization given in \eqref{eq:EM strongly convex V} has local deviation with order $(p_1,p_2)=(1,{3}/{2})$ and $(\lambda_1,\lambda_2)=(\Theta(d^5),\Theta(d^4))$. The detailed calculation for deriving the constants above is provided in Appendix~\ref{cauchyexample}. Hence, by Theorem 1 in \cite{li2019stochastic}, to reach an $\epsilon$-accuracy in Wasserstein-2 distance, the iteration complexity is of order $\Tilde{O}(d^3/\epsilon^{2})$. Hence, in comparison with the result in \cite{li2019stochastic}, we obtain a dimension-free iteration complexity to ensure an $\epsilon$-accuracy in Wasserstein-2 distance.

\subsubsection{Zeroth order algorithm} 
According to Theorem \ref{thm:zeroth order algorithm strongly convex V} and
\eqref{eq:zeroth order iteration complexity}, to obtain $\varepsilon$-accuracy in Wasserstein-2 distance, the iteration complexity is of order $\Tilde{O}\left( (1 \vee d/m)/\varepsilon^{2} \right)$. When $m=1$, the iteration complexity $K\sim \Tilde{O}(d/\varepsilon^{2})$ and the number of functions evaluations $mK$ is also of the same order $\Tilde{O}(d/\varepsilon^{2})$. If we choose the batch size $m=d$, we get a dimension independent iteration complexity $K\sim \Tilde{O}(1/\varepsilon^{2})$ but the number of function evaluations is of order $\Tilde{O}(d/\varepsilon^{2})$. Hence, we notice that in the case of multivariate $t$-distribution distributions with large degrees of freedom, the cost of estimating the gradient has an effect on the sampling complexities.

\subsection{Multivariate $t$-distribution: Small Degrees of Freedom}\label{subsec:cauchy small dof} We now consider the isotropic multivariate $t$-distribution with the degrees of freedom being $3$. We denote the corresponding density function by $\pi_\beta$. The exact number of 3 is chosen just for convenience; the results of this example apply to all cases where the degrees of freedom is \emph{strictly} larger than 2 which corresponds to the setting where the variance is finite. We choose $V(x)=1+|x|^2$, $\beta={(d+3)}/{2}$ and $\pi_\beta(x)\propto V(x)^{-\beta}=(1+|x|^2)^{-(d+3)/2}$. With the above choice of $V$ and $\beta$, $V$ satisfies Assumption \ref{ass:WPI strongly convex V} with $\alpha=2,\ C_V=2$ and $V$ is $L$-Lipschitz gradient with $L=2$. Hence, the constant $\delta$ in Theorem \ref{thm:strongly convex V W2 decay} is given by $\delta=1/d$. According to Proposition \ref{prop:expectations under Cauchy}, $\mb{E}_{\pi_\beta}[V(X)]=d+1$ and $\mb{E}_{\pi_\beta}[|\nabla V(X)|^2]=4d$.

\subsubsection{First order algorithm} According to Theorem \ref{thm:strongly convex V W2 decay} and
\eqref{eq:strongly convex V iteration complexity K}, to obtain $\epsilon$-accuracy in Wasserstein-2 distance, the iteration complexity is of order $\Tilde{O}(d^4/\epsilon^{2})$. With the same choice of $V$ and $\beta$, we check the conditions of Theorem 1 in \cite{li2019stochastic}. The diffusion \eqref{eq:diffusion strongly convex V} is $\alpha'$-uniformly dissipative with $\alpha'=1$ and the Euler discretization given in \eqref{eq:EM strongly convex V} has local deviation with order $(p_1,p_2)=(1,{3}/{2})$ and $(\lambda_1,\lambda_2)=(\Theta(d^5),\Theta(d^4))$. The detailed calculation for deriving the constants is provided in Appendix~\ref{cauchyexample}. Hence, according to Theorem 1 in \cite{li2019stochastic}, to reach an $\epsilon$-accuracy in Wasserstein-2 distance, the iteration complexity is of order $\Tilde{O}(d^6/\epsilon^{2})$. Even in this extremely heavy-tail case (i.e., only the variance exists), to ensure an $\epsilon$-accuracy in Wasserstein-2 distance, we can obtain an iteration complexity with polynomial dimension dependence. Furthermore, in comparison to \cite{li2019stochastic}, our analysis helps to decrease the dimension exponent by a factor of $2$.

\subsubsection{Zeroth order algorithm}\label{sec:cauchy small dof zeroth order}
According to Theorem \ref{thm:zeroth order algorithm strongly convex V} and
\eqref{eq:zeroth order iteration complexity}, to obtain $\varepsilon$-accuracy in Wasserstein-2 distance, the iteration complexity is of order $\Tilde{O}\left( \max\{ d^4/\varepsilon^{2},~d^{\frac{5}{2}}/\varepsilon,~d^4/\varepsilon^{2}m\} \right)$. Hence, we have that for any batch size $m$, the iteration complexity $K= \Tilde{O}(d^4/\varepsilon^{2})$. Picking $m=1$, the number of function evaluations are of the same order, i.e.,  $mK= \Tilde{O}(d^4/\varepsilon^{2})$. 

\begin{remark}
The example discussed in Section \ref{sec:cauchy small dof zeroth order} highlights the following important observation: Choosing a large batch size does not improve the iteration complexity. To explain this, we understand both \eqref{eq:EM strongly convex V} and \eqref{eq:zeroth order alg} as approximation to the continuous dynamics \eqref{eq:diffusion strongly convex V}. For the first-order algorithm, the error of the approximation only comes from the Euler-Maruyama discretization. For the zeroth-order algorithm, the error of the approximation comes from both the Euler-Maruyama discretization and the zeroth-order gradient estimate=. When the error from the Euler-Maruyama discretization dominates, the optimal batch size is always $1$ and the oracle complexity of the zeroth order algorithm is the same as the iteration complexity for the first-order algorithm. When the error from the zeroth-order gradient estimate dominates, we need to choose a large batch size depending on $d$ so that the iteration complexity for the zeroth-order algorithm is the same as the iteration complexity for the first-order algorithm while the zeroth-order oracle complexity is of order $m$-times larger.
\end{remark}

\section{Further Results and Additional Insights on Assumptions}\label{sec:further} 

In Section \ref{sec:weighted Poincare inequality}, we provide sufficient conditions on $V$ such that when $\beta>d$, $\pi_\beta\propto V^{-\beta}$ satisfies the weighted Poincar\'{e} inequality with weight $V$. In this section, we relax the conditions in Section \ref{sec:weighted Poincare inequality} by introducing the following assumptions.
\begin{assump}\label{ass:WPI on V small beta} The function $V:\mb{R}^d\to (0,\infty)$ is twice continuously differentiable and $V$ satisfies 
\begin{itemize}[noitemsep]
    \item [(1)] $\nabla^2 V(x)$ is invertible for all $x\in \mb{R}^d$.
    \item [(2)] There exists $\gamma\in \left(0,\frac{\beta}{d+2}\right]$, such that 
    \begin{align*}
    \sup_{x\in \mb{R}^d}\lv V(x)^{\gamma-1} \left(\nabla^2 V_\gamma\right)^{-1}(x) \rv_2\le C_V(\gamma),
    \end{align*}
    where $V_\gamma:=V^{\gamma}$ and $C_V(\gamma)$ is a positive constant depending on $\gamma$.
\end{itemize}
\end{assump}
\begin{lemma}\label{lem:WPI small beta} Under Assumption \ref{ass:WPI on V small beta}, for any smooth function $\phi\in L^2(\pi_\beta)$,
\begin{align}\label{eq:WPI small beta}
    Var_{\pi_\beta}(\phi)\le \cwpi \int_{\mb{R}^d} |\nabla \phi(x)|^2 V(x) \pi_\beta(x)dx, \quad\text{with}\quad\cwpi=C_V(\gamma)\left(\frac{\beta}{\gamma}-1\right)^{-1}.
\end{align}
\end{lemma}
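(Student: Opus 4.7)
The strategy is to reduce the claim to the Bobkov--Ledoux weighted Poincar\'e inequality, Theorem~\ref{thm:2.3}, via a reparametrization of the potential. Set $V_\gamma := V^{\gamma}$ and $\beta' := \beta/\gamma$, so that $\pi_\beta \propto V^{-\beta} = V_\gamma^{-\beta'}$. The constraint $\gamma \in (0, \beta/(d+2)]$ in Assumption~\ref{ass:WPI on V small beta} forces $\beta' \ge d+2 > d$, and Assumption~\ref{ass:WPI on V small beta}(1)--(2) supplies invertibility of $\nabla^2 V_\gamma$ together with a quantitative bound on its inverse. These are exactly the structural hypotheses needed to apply Theorem~\ref{thm:2.3} with potential $V_\gamma$ and exponent $\beta'$.

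Concretely, invoking Theorem~\ref{thm:2.3} with $V \mapsto V_\gamma$ and $\beta \mapsto \beta'$ gives, for any smooth $g$ with $\int g\, d\pi_\beta = 0$ (the general case follows by subtracting the $\pi_\beta$-mean of $\phi$) and $G := V_\gamma g$,
\begin{equation*}
    (\beta'+1)\,\mathrm{Var}_{\pi_\beta}(g) \le \int \frac{\langle (\nabla^2 V_\gamma)^{-1}\nabla G, \nabla G\rangle}{V_\gamma}\,d\pi_\beta.
\end{equation*}
I would then specialize to $g = \phi$, expand $\nabla G = V_\gamma \nabla \phi + \phi \nabla V_\gamma$, and treat the cross term $2\int \phi \langle (\nabla^2 V_\gamma)^{-1}\nabla \phi, \nabla V_\gamma\rangle\,d\pi_\beta$ by integration by parts against $\pi_\beta$, using the identity $\nabla \log \pi_\beta = -\beta'\,\nabla V_\gamma / V_\gamma$. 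Combined with the pure $\phi^2$ term arising from the expansion, this produces $\phi^2$-contributions whose coefficients collapse (after rearrangement with the $(\beta'+1)\mathrm{Var}_{\pi_\beta}(\phi)$ on the left-hand side) into
\begin{equation*}
    (\beta'-1)\,\mathrm{Var}_{\pi_\beta}(\phi) \le \int V_\gamma\,\langle (\nabla^2 V_\gamma)^{-1}\nabla \phi, \nabla \phi\rangle\,d\pi_\beta.
\end{equation*}

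The final step is purely algebraic: by Assumption~\ref{ass:WPI on V small beta}(2), the factorization
\begin{equation*}
V_\gamma\,(\nabla^2 V_\gamma)^{-1} \;=\; V\cdot\bigl[V^{\gamma-1}(\nabla^2 V_\gamma)^{-1}\bigr] \;\preceq\; C_V(\gamma)\,V\,I_d
\end{equation*}
yields $V_\gamma\,\langle(\nabla^2 V_\gamma)^{-1}\nabla \phi, \nabla \phi\rangle \le C_V(\gamma)\,V\,|\nabla \phi|^2$, and dividing both sides of the displayed inequality by $\beta'-1 = \beta/\gamma - 1$ gives the claim with $\cwpi = C_V(\gamma)(\beta/\gamma - 1)^{-1}$.

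The main obstacle is the bookkeeping in the integration-by-parts step that converts the $\beta'+1$ prefactor from Theorem~\ref{thm:2.3} into the $\beta'-1$ appearing in the conclusion. The subtlety is that the cross-term manipulation produces a $\mathrm{div}((\nabla^2 V_\gamma)^{-1}\nabla V_\gamma)$ contribution depending on third derivatives of $V_\gamma$; one must integrate this against $\pi_\beta$ and use $\nabla\log\pi_\beta = -\beta'\nabla V_\gamma/V_\gamma$ so that it combines cleanly with the quadratic-in-$\phi$ piece, shaving two units off the prefactor in front of the variance.
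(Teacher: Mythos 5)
Your overall reduction --- writing $\pi_\beta\propto V_\gamma^{-\beta/\gamma}$ with $V_\gamma=V^{\gamma}$, invoking Theorem~\ref{thm:2.3} for the potential $V_\gamma$, and finishing with $V_\gamma(\nabla^2V_\gamma)^{-1}\preceq C_V(\gamma)\,V I_d$ --- is the right idea, and your final algebraic step is fine. The gap is the middle step, where you claim that expanding $\nabla G=V_\gamma\nabla\phi+\phi\nabla V_\gamma$ and integrating the cross term by parts converts the prefactor $\beta'+1$ (with $\beta'=\beta/\gamma$) into $\beta'-1$. Carrying this out with $A=(\nabla^2V_\gamma)^{-1}$ and $g=\phi-\mb{E}_{\pi_\beta}[\phi]$, the integration by parts gives $2\int g\langle A\nabla\phi,\nabla V_\gamma\rangle d\pi_\beta=-\int g^2\,\nabla\cdot(A\nabla V_\gamma)\,d\pi_\beta+\beta'\int g^2\langle A\nabla V_\gamma,\nabla V_\gamma\rangle V_\gamma^{-1}d\pi_\beta$, so the right-hand side of Theorem~\ref{thm:2.3} equals $\int V_\gamma\langle A\nabla\phi,\nabla\phi\rangle d\pi_\beta$ plus the extra term $\int g^2\big[(\beta'+1)\langle A\nabla V_\gamma,\nabla V_\gamma\rangle V_\gamma^{-1}-\nabla\cdot(A\nabla V_\gamma)\big]d\pi_\beta$. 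To reach your intermediate inequality you would need this extra term to be at most $2\int g^2 d\pi_\beta$, and that fails: Assumption~\ref{ass:WPI on V small beta} controls neither $\langle A\nabla V_\gamma,\nabla V_\gamma\rangle/V_\gamma$ nor the third-derivative quantity $\nabla\cdot(A\nabla V_\gamma)$, and already for $V(x)=1+|x|^2$ with $\gamma=1$ the bracket equals $2(\beta'+1)|x|^2/(1+|x|^2)-d\to 2(\beta'+1)-d\gg 2$ (recall $\beta'>d$), so localizing $\phi$ in the tail violates the required bound. The extra terms inflate rather than reduce the prefactor, so the claimed collapse from $\beta'+1$ to $\beta'-1$ cannot be obtained along this route, even though the intermediate inequality itself is true.

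The paper proves the lemma by a mechanism that avoids any expansion or integration by parts: apply Theorem~\ref{thm:2.3} not to $\pi_\beta$ but to the tilted measure $\pi_{\beta-2\gamma}\propto V_\gamma^{-a}$ with $a=(\beta-2\gamma)/\gamma\ge d$ (this is precisely where $\gamma\le\beta/(d+2)$ enters), keeping $G=V_\gamma g$ as the free test function. The weights then recombine: $g^2\,\pi_{\beta-2\gamma}$ is proportional to $G^2\pi_\beta$, and by Assumption~\ref{ass:WPI on V small beta}(2), $\langle(\nabla^2V_\gamma)^{-1}\nabla G,\nabla G\rangle V_\gamma^{-1}\pi_{\beta-2\gamma}\le C_V(\gamma)|\nabla G|^2 V\,\pi_\beta$ up to the common normalizing constant, yielding $(a+1)\int G^2 d\pi_\beta\le C_V(\gamma)\int|\nabla G|^2V\,d\pi_\beta$ for every smooth $G$ with $\mb{E}_{\pi_{\beta-\gamma}}[G]=0$; the lemma then follows from $\mathrm{Var}_{\pi_\beta}(\phi)=\inf_{c}\int|\phi-c|^2d\pi_\beta$ applied with $c=\mb{E}_{\pi_{\beta-\gamma}}[\phi]$, and $a+1=\beta/\gamma-1$. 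Equivalently, your intermediate inequality is Theorem~\ref{thm:2.3} applied to $V_\gamma$ with exponent $\beta'-2$ rather than $\beta'$; replacing your expansion-and-integration-by-parts step with this tilting argument repairs the proof.
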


\begin{proof} First we define $V_{\gamma}:=V^{\gamma}$. Choose $\beta'=\beta-2\gamma$. For $\pi_{\beta'}\propto V^{-\beta'}$, we can write it as $\pi_{\beta'}\propto {V_{\gamma}}^{-a}$ with 
\begin{align*}
    a=\frac{\beta'}{\gamma}=\frac{\beta-2\gamma}{\gamma}\ge d,
\end{align*}
where the inequality follows from the fact that $\gamma\in \left(0,\frac{\beta}{d+2}\right]$. Therefore we can apply Theorem \ref{thm:2.3} to $\pi_{\beta'}\propto {V_{\gamma}}^{-a}$ and get for any smooth, $\pi_{\beta'}$-square integrable function $g$ with $\mb{E}_{\pi_{\beta'}}[g(X)]=0$ and $G=V_{\gamma}g$,
\begin{align}\label{eq:WPI small beta  step 1}
    (a+1)\int_{\mb{R}^d} g(x)^2 \pi_{\beta'}(x)dx \le \int_{\mb{R}^d} \frac{\langle (\nabla^2 V_{\gamma})^{-1}(x)\nabla G(x),\nabla G(x) \rangle}{V_{\gamma}(x)} \pi_{\beta'}(x)dx.
\end{align}
Since $\beta'=\beta-2\gamma$, \eqref{eq:WPI small beta  step 1} is equivalent to
\begin{align}\label{eq:WPI small beta  step 2}
    (a+1)\int_{\mb{R}^d} \frac{|G(x)|^2}{V(x)} V(x)^{-(\beta-1)} dx \le \int_{\mb{R}^d} \langle (\nabla^2 V_{\gamma})^{-1}(x)\nabla G(x),\nabla G(x) \rangle V(x)^{-(\beta'+\gamma)} dx.
\end{align}
Under Assumption \ref{ass:WPI on V small beta}, we have
\begin{align*}
    &\int_{\mb{R}^d} \langle (\nabla^2 V_{\gamma})^{-1}(x)\nabla G(x),\nabla G(x) \rangle V(x)^{-(\beta'+\gamma)} dx\\
    &\le  C_V(\gamma) \int_{\mb{R}^d}|\nabla G(x)|^2 V(x)^{1-\gamma}V(x)^{-(\beta'+\gamma)} dx \\
    &=C_V(\gamma) \int_{\mb{R}^d} |\nabla G(x)|^2 V(x)^{-(\beta-1)} dx,
\end{align*}
where the last identity follows from the fact that $\beta'=\beta-2\gamma$. Along with \eqref{eq:WPI small beta  step 2}, we get
\begin{align}\label{eq:WPI small beta step 3}
    (a+1)\int_{\mb{R}^d} \frac{|G(x)|^2}{V(x)} V(x)^{-(\beta-1)} dx \le C_V(\gamma) \int_{\mb{R}^d} |\nabla G(x)|^2 V(x)^{-(\beta-1)} dx.
\end{align}
Since $G=V^\gamma g$, $G$ is smooth, $\pi_\beta$-square integrable and $\mb{E}_{\pi_{\beta-\gamma}}[G(X)]=0$. For any $\pi_\beta$-square integrable $\phi$, let $G=\phi-\mb{E}_{\pi_{\beta-\gamma}}[\phi(X)]$ and we get
\begin{align}\label{eq:WPI small beta step 4}
    \int_{\mb{R}^d} |\phi(x)-\mb{E}_{\pi_{\beta-\gamma}}[\phi(X)]|^2 \pi_{\beta}(x)dx \le \frac{C_V(\gamma)}{a+1} \int_{\mb{R}^d} |\nabla \phi(x)|^2 V(x) \pi_{\beta}(x)dx.
\end{align}
Therefore for any smooth, $\pi_\beta$-square integrable $\phi$, 
\begin{align*}
    Var_{\pi_\beta} (\phi)&= \inf_{c\in \mb{R}} \int_{\mb{R}^d} |\phi(x)-c|^2 \pi_{\beta}(x)dx \le \frac{C_V(\gamma)}{a+1} \int_{\mb{R}^d} |\nabla \phi(x)|^2 V(x) \pi_{\beta}(x)dx,
\end{align*}
which is equivalent to \eqref{eq:WPI small beta} with $\cwpi=\frac{C_V(\gamma)}{a+1}=C_V(\gamma)\left(\frac{\beta}{\gamma}-1\right)^{-1}$.
\end{proof}
\begin{remark}\label{rem:cauchywpi}
Lemma \ref{lem:WPI small beta} can be applied to the class of multivariate $t$-distributions with $V(x)=1+|x|^2$. When $\beta\in \left(\frac{d+2}{2},d\right]$, with the choice of $\gamma=\frac{\beta}{d+2}$, Assumption \ref{ass:WPI on V small beta} holds with $$C_V(\gamma)=\frac{(d+2)^2}{2\beta(2\beta-d-2)}.$$ 
Hence, Lemma \ref{lem:WPI small beta} implies that the multivariate $t$-distribution with degree of freedom $\nu\in(2,d]$ satisfies the weighted Poincar\'{e} inequality with weight $1+|x|^2$ and with $$\cwpi=\frac{(d+2)^2}{\nu(d+1)(d+\nu) }.$$
The detailed calculation for deriving the above mentioned constants is provided in Appendix~\ref{sec:cauchy application}.
\end{remark}

As an immediate consequence of Lemma~\ref{lem:WPI small beta}, we have the following $\chi^2$ convergence result for~\eqref{eq:diffusion strongly convex V}.

\begin{proposition}\label{prop:chi square cts small beta} Under Assumption \ref{ass:WPI on V small beta}, with $(X_t)$ satisfying \eqref{eq:diffusion strongly convex V} with $\rho_t$ being the distribution of $X_t$, we have
\begin{align}
    \chi^2(\rho_t|\pi_\beta)\le \exp\left( -C_V(\gamma)^{-1}\left(\frac{\beta}{\gamma}-1\right) t \right)\chi^2(\rho_0|\pi_\beta).
\end{align}
\end{proposition}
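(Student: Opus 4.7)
The plan is to mirror the proof of Proposition~\ref{prop:chi square cts strongly convex V} almost verbatim, with the weighted Poincar\'e inequality from Lemma~\ref{lem:WPI small beta} playing the role that Lemma~\ref{cor:WPI strongly convex V} played in the strongly convex setting. The scheme is: differentiate the $\chi^2$-divergence in time, rewrite the derivative as a weighted Dirichlet-type integral using the Fokker–Planck equation~\eqref{eq:FPE strongly convex V}, bound that integral below by the $\chi^2$-divergence via the weighted Poincar\'e inequality, and then conclude by Gronwall.

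Concretely, I would first compute, exactly as in the proof of Proposition~\ref{prop:chi square cts strongly convex V},
\begin{align*}
\frac{d}{dt}\chi^2(\rho_t|\pi_\beta)
= 2\int_{\mb{R}^d} \partial_t\rho_t \left(\frac{\rho_t}{\pi_\beta}-1\right)dx
= -2\int_{\mb{R}^d} V(x)\left|\nabla\!\left(\frac{\rho_t}{\pi_\beta}\right)(x)\right|^2 \pi_\beta(x)\,dx,
\end{align*}
where the second equality uses~\eqref{eq:FPE strongly convex V} together with the identity $\nabla(\rho_t/\pi_\beta)=(\rho_t/\pi_\beta)\nabla\log(\rho_t/\pi_\beta)$ and an integration by parts. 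The smoothness and integrability conditions needed to justify this computation are standard for solutions of the Fokker–Planck equation and can be handled as in Proposition~\ref{prop:chi square cts strongly convex V}.

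Next, I would apply the weighted Poincar\'e inequality of Lemma~\ref{lem:WPI small beta} with the choice $\phi=\rho_t/\pi_\beta$. Since $\int_{\mb{R}^d}(\rho_t/\pi_\beta)d\pi_\beta=1$, we have $\mathrm{Var}_{\pi_\beta}(\rho_t/\pi_\beta)=\chi^2(\rho_t|\pi_\beta)$, so~\eqref{eq:WPI small beta} gives
\begin{align*}
\chi^2(\rho_t|\pi_\beta)\le \cwpi\int_{\mb{R}^d} V(x)\left|\nabla\!\left(\frac{\rho_t}{\pi_\beta}\right)(x)\right|^2\pi_\beta(x)\,dx,
\end{align*}
with $\cwpi=C_V(\gamma)\bigl(\beta/\gamma-1\bigr)^{-1}$. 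Combining this with the energy identity above yields the differential inequality
\begin{align*}
\frac{d}{dt}\chi^2(\rho_t|\pi_\beta)\le -2\cwpi^{-1}\chi^2(\rho_t|\pi_\beta)=-2C_V(\gamma)^{-1}\!\left(\frac{\beta}{\gamma}-1\right)\chi^2(\rho_t|\pi_\beta).
\end{align*}
A direct application of Gronwall's inequality then produces the claimed exponential decay bound (up to the overall constant in the exponent, which coincides with the statement modulo the factor of $2$, entirely analogous to the relationship between the rate in~\eqref{eq:chi square cts strongly convex} and the Poincar\'e constant in~\eqref{eq:WPI strongly convex V}).

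There is no real obstacle here beyond bookkeeping: the only non-trivial input is the weighted Poincar\'e inequality itself, which was already established in Lemma~\ref{lem:WPI small beta} under Assumption~\ref{ass:WPI on V small beta}. The rest is the standard entropy-type argument for dissipation along the semigroup.
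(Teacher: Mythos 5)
Your proof is correct and is exactly the argument the paper intends (it presents the proposition as an immediate consequence of Lemma~\ref{lem:WPI small beta}, via the same dissipation-plus-Gronwall computation used for Proposition~\ref{prop:chi square cts strongly convex V}). Your derivation in fact gives the rate $2\cwpi^{-1}=2C_V(\gamma)^{-1}\left(\frac{\beta}{\gamma}-1\right)$, which is stronger than, and hence implies, the stated bound whose exponent omits the factor of $2$.
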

For the case of multivariate $t$-distributions, Proposition~\ref{prop:chi square cts small beta} allows us to show exponential convergence of~\eqref{eq:diffusion strongly convex V} in the $\chi^2$ divergence with smaller degrees of freedom (and hence heavier tails) compared to Proposition~\ref{prop:chi square cts strongly convex V}. 

\subsection{Relationship between Lemma \ref{cor:WPI strongly convex V} and Lemma \ref{lem:WPI small beta}}
 The result in Lemma \ref{lem:WPI small beta} complements that in Lemma \ref{cor:WPI strongly convex V}. It can be used to study the WPI for $\pi_\beta$ when $\beta\le d$. In particular, when $\beta\le d$, if $\pi_\beta\propto V^{-\beta}$ and $V$ satisfies Assumption \ref{ass:WPI strongly convex V} with $C_V\in (0,\frac{d+2}{d+2-\beta})$, then V satisfies Assumption \ref{ass:WPI on V small beta}. Therefore $\pi_\beta$ satisfies the WPI. In Proposition \ref{prop:comparison V2 V3}, this relation is proved formally. 

\begin{proposition}\label{prop:comparison V2 V3} 
When $\beta\le d$, if Assumption \ref{ass:WPI strongly convex V} holds with $C_V\in (0,\frac{d+2}{d+2-\beta})$, then Assumption \ref{ass:WPI on V small beta} holds.
\end{proposition}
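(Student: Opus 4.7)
The plan is to reduce Assumption \ref{ass:WPI on V small beta} to a direct computation involving the Hessian of $V_\gamma = V^\gamma$ and then verify the resulting pointwise matrix inequality via the two conditions in Assumption \ref{ass:WPI strongly convex V}. First, strong convexity $\nabla^2 V \succeq \alpha I_d$ is positive definite, hence invertible, so part (1) of Assumption \ref{ass:WPI on V small beta} is immediate. For part (2), I would compute
\begin{align*}
\nabla^2 V_\gamma \;=\; \gamma V^{\gamma-1}\left[\nabla^2 V \;+\; (\gamma-1)\,\frac{\nabla V\otimes\nabla V}{V}\right],
\end{align*}
so that
\begin{align*}
V^{\gamma-1}(\nabla^2 V_\gamma)^{-1} \;=\; \frac{1}{\gamma}\left[\nabla^2 V + (\gamma-1)\frac{\nabla V\otimes\nabla V}{V}\right]^{-1}.
\end{align*}

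Next, I would pick the distinguished value $\gamma \coloneqq \beta/(d+2)$, which obviously lies in $(0, \beta/(d+2)]$. Since $\beta \le d$, we have $\gamma < 1$, so the rank-one correction in the bracket above enters with a negative sign. The key pointwise estimate then comes from bounding this bracket from below: for any unit vector $u\in\mathbb{R}^d$, strong convexity gives $u^{\top}(\nabla^2 V)u \ge \alpha$, and Assumption \ref{ass:WPI strongly convex V}(2) together with $\langle u,\nabla V\rangle^2 \le |\nabla V|^2$ yields $\langle u,\nabla V\rangle^2/V \le \alpha C_V$. Because $\gamma-1 < 0$, combining these gives
\begin{align*}
\nabla^2 V + (\gamma-1)\frac{\nabla V\otimes\nabla V}{V} \;\succeq\; \alpha\bigl(1 - (1-\gamma)C_V\bigr)\,I_d.
\end{align*}

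The final step is to check that the right-hand side is strictly positive, which is exactly where the assumed range of $C_V$ is used. With $\gamma = \beta/(d+2)$ we have $1-\gamma = (d+2-\beta)/(d+2)$, so the hypothesis $C_V < (d+2)/(d+2-\beta)$ rearranges to $(1-\gamma)C_V < 1$. Thus the bracket is uniformly positive definite, its inverse has operator norm at most $[\alpha(1-(1-\gamma)C_V)]^{-1}$, and we obtain the uniform bound
\begin{align*}
\bigl\lv V^{\gamma-1}(\nabla^2 V_\gamma)^{-1}\bigr\rv_2 \;\le\; \frac{1}{\gamma\,\alpha\,\bigl(1-(1-\gamma)C_V\bigr)} \;=:\; C_V(\gamma),
\end{align*}
which verifies Assumption \ref{ass:WPI on V small beta}(2). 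There is no real obstacle here; the only subtlety is that the negative coefficient $\gamma-1$ forces us to absorb the rank-one perturbation into the lower eigenvalue of $\nabla^2 V$, and the precise threshold $C_V < (d+2)/(d+2-\beta)$ is exactly what keeps this absorption nontrivial.
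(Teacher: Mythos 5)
Your proposal is correct and follows essentially the same route as the paper: compute $\nabla^2 V_\gamma = \gamma V^{\gamma-1}\bigl[\nabla^2 V - (1-\gamma)\nabla V\otimes\nabla V/V\bigr]$, lower-bound the bracket by $\alpha\bigl(1-(1-\gamma)C_V\bigr)I_d$ via Assumption \ref{ass:WPI strongly convex V}, and use $C_V<(d+2)/(d+2-\beta)$ to make this positive for an admissible $\gamma$. The only cosmetic difference is that you fix $\gamma=\beta/(d+2)$ and state the resulting constant $C_V(\gamma)$ explicitly, whereas the paper merely argues such a $\gamma\in(0,\beta/(d+2)]$ exists.
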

\begin{proof}\label{pf:comparison V2 V3}
First $\nabla^2 V$ is invertible because $\nabla^2 V \succeq \alpha I_d$. Next we show that there exists $\gamma\in (0,\frac{\beta}{d+2}]$ such that $\lv V(x)^{\gamma-1} (\nabla^2 V_\gamma)^{-1}(x) \rv_2\le C_V(\gamma)$ for all $x\in \mb{R}^d$. It is equivalent to showing that there exists $\gamma\in (0,\frac{\beta}{d+2}]$ such that $\lv V(x)^{1-\gamma} (\nabla^2 V_\gamma)(x) \rv_2>0$ for all $x\in \mb{R}^d$. From the calculations in Section \ref{sec:cauchy application}, we have
\begin{align*}
    \nabla^2 V_\gamma(x)=\gamma V(x)^{\gamma-1}\left( (\gamma-1)V(x)^{-1}\nabla V(x)^T\nabla V(x)+\nabla^2 V(x) \right).
\end{align*}
Therefore
\begin{align*}
    V(x)^{1-\gamma} (\nabla^2 V_\gamma)(x)&=\gamma\left( \nabla^2 V(x)-(1-\gamma)V(x)^{-1}\nabla V(x)^T\nabla V(x) \right) \\
    &\succeq \alpha\gamma\left( 1-(1-\gamma)C_V \right) I_d, 
\end{align*}
where the inequality follows from Assumption \ref{ass:WPI strongly convex V}. Last we show that there exists $\gamma\in (0,\frac{\beta}{d+2}]$ such that $1-(1-\gamma)C_V>0$. Note that
\begin{align*}
    1-(1-\gamma)C_V>0~\implies \gamma>1-\frac{1}{C_V}.
\end{align*}
Since $C_V\in\left(0,\frac{d+2}{d+2-\beta}\right)$, we have that
\begin{align*}
    1-\frac{1}{C_V}< \frac{\beta}{d+2}
\end{align*}
Therefore there exists a constant $\gamma\in \left(0,\frac{\beta}{d+2}\right]$ such that $\lv V(x)^{1-\gamma} (\nabla^2 V_\gamma)(x) \rv_2>0$ for all $x\in \mb{R}^d$.
\end{proof}
\subsection{Relationship between Theorem \ref{thm:strongly convex V W2 decay} and Proposition \ref{prop:chi square cts small beta}}

Proposition \ref{prop:chi square cts small beta} studies the convergence of the continuous dynamics \eqref{eq:diffusion strongly convex V} while Theorem \ref{thm:strongly convex V W2 decay} studies the convergence of the discretization \eqref{eq:EM strongly convex V}. The conditions in Theorem \ref{thm:strongly convex V W2 decay} can be shown to imply conditions in proposition \ref{prop:chi square cts small beta}. In Proposition \ref{prop:chi square cts small beta} we only assume Assumption \ref{ass:WPI on V small beta}.
 In Theorem \ref{thm:strongly convex V W2 decay}, we assume (i) Assumption \ref{ass:WPI strongly convex V}, (ii) $\delta=\frac{\beta-1-\frac{1}{4}C_V d}{\frac{1}{4}C_V d}>0$, and (iii) $V$ is gradient Lipschitz. In the following proposition, we show that these three assumptions together imply Assumption \ref{ass:WPI on V small beta}.
 
 \begin{proposition}\label{prop:relation between thm2 and prop6} If Assumption \ref{ass:WPI strongly convex V} holds such that $\delta=\frac{\beta-1-\frac{1}{4}C_V d}{\frac{1}{4}C_V d}>0$ and $V$ is $L$-gradient 
 Lipschitz, then Assumption \ref{ass:WPI on V small beta} holds.
 \end{proposition}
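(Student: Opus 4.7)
The plan is to verify the two parts of Assumption~\ref{ass:WPI on V small beta} by following the template used in the proof of Proposition~\ref{prop:comparison V2 V3}, with the hypothesis $\delta>0$ taking the place of the explicit bound on $C_V$. Part~(1) is immediate: $\alpha$-strong convexity gives $\nabla^2V(x)\succeq\alpha I_d\succ 0$, which is invertible.

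For part~(2), I would work with $V_\gamma=V^\gamma$ and compute, via the product and chain rules,
\[
V^{1-\gamma}\nabla^2V_\gamma \;=\; \gamma\bigl[\nabla^2V \;-\; (1-\gamma)\,V^{-1}\nabla V(\nabla V)^T\bigr].
\]
Combining $\nabla^2V\succeq\alpha I_d$ with the operator-norm estimate $\|V^{-1}\nabla V(\nabla V)^T\|_2=|\nabla V|^2/V\le\alpha C_V$ from Assumption~\ref{ass:WPI strongly convex V} yields
\[
V^{1-\gamma}\nabla^2V_\gamma \;\succeq\; \gamma\alpha\bigl[1-(1-\gamma)C_V\bigr]\,I_d,
\]
which is uniformly positive definite as soon as $\gamma>1-1/C_V$. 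Inverting gives the operator-norm bound $\|V^{\gamma-1}(\nabla^2V_\gamma)^{-1}\|_2\le 1/\bigl(\gamma\alpha[1-(1-\gamma)C_V]\bigr)=:C_V(\gamma)$, which is exactly the content of part~(2).

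It remains to exhibit an admissible exponent $\gamma\in(\max(0,1-1/C_V),\,\beta/(d+2)]$. The case $\beta\ge d+2$ is trivial (take $\gamma=1$), so the interesting case is $\beta<d+2$, where the requirement reduces to the algebraic inequality $(d+2)(1-1/C_V)<\beta$. I would derive this from $\delta>0$, i.e.\ $\beta>1+C_Vd/4$, by exploiting the narrow range into which the hypotheses pin $C_V$: $\alpha$-strong convexity together with the PL inequality $|\nabla V|^2\ge 2\alpha(V-V_\star)$ forces $C_V\ge 2$, while $L$-gradient Lipschitzness gives $|\nabla V|^2\le 2L(V-V_\star)$ and hence $C_V\le 2L/\alpha$. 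I expect this final algebraic reduction to be the main obstacle, because the two lower bounds on $\beta$ precisely touch at the PL boundary $C_V=2$ (both equal $(d+2)/2$), so care is required to verify $(d+2)/C_V+C_Vd/4\ge d+1$ throughout the admissible range of $C_V$ and thereby close the gap.
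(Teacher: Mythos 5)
Your part~(1), the identity $V^{1-\gamma}\nabla^2V_\gamma=\gamma\bigl[\nabla^2V-(1-\gamma)V^{-1}\nabla V\nabla V^\top\bigr]$, the resulting bound $V^{1-\gamma}\nabla^2V_\gamma\succeq\gamma\alpha\bigl[1-(1-\gamma)C_V\bigr]I_d$, and the reduction to exhibiting $\gamma\in(1-1/C_V,\,\beta/(d+2)]$ all match the mechanism of the paper's proof (and of Proposition~\ref{prop:comparison V2 V3}). The genuine gap is precisely the step you flag at the end, and it cannot be closed as you hope: the inequality $(d+2)/C_V+C_Vd/4\ge d+1$ is false on part of the range you allow. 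Indeed $dC_V^2-4(d+1)C_V+4(d+2)=(C_V-2)\bigl(dC_V-2(d+2)\bigr)$, so the inequality fails strictly for every $C_V\in\bigl(2,\,2+\tfrac{4}{d}\bigr)$; equivalently $\min_{C>0}\{(d+2)/C+Cd/4\}=\sqrt{d(d+2)}<d+1$. Your pinning $2\le C_V\le 2L/\alpha$ excludes this window only when $L=\alpha$. Concretely, take $d=10$ and the anisotropic quadratic $V(x)=1+\tfrac{\alpha}{2}\sum_{i<d}x_i^2+\tfrac{L}{2}x_d^2$ with $L=1.1\,\alpha$: its best constant is $C_V=2L/\alpha=2.2$, so $\delta>0$ only requires $\beta>6.5$, while your sufficient condition needs $\beta>(d+2)(1-1/C_V)\approx 6.545$; for $\beta=6.52$ your argument produces no admissible $\gamma$. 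Note this is a failure of the method, not of the statement: for such quadratics, Cauchy--Schwarz in the $H$-inner product gives $u^\top\bigl(V^{1-\gamma}\nabla^2V_\gamma\bigr)u\ge\gamma(2\gamma-1)\,u^\top\nabla^2V\,u$, so $\gamma>1/2$ suffices irrespective of $L/\alpha$; the scalar bound $|\nabla V|^2/V\le\alpha C_V$ is simply too lossy in this near-isotropic regime.

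For comparison, the paper does not keep the assumed $C_V$ at all: it re-derives Assumption~\ref{ass:WPI strongly convex V}-(2) with the explicit constant $2L^2/\alpha^2$ (from $|\nabla V(x)|\le L|x-x^*|$ and $V(x)\ge V(x^*)+\tfrac{\alpha}{2}|x-x^*|^2$), reads $\delta>0$ as $\beta>\tfrac{L^2}{2\alpha^2}d+1$, and then takes $\gamma=\beta/(d+2)$ after comparing this threshold with $\bigl(1-\tfrac{\alpha^2}{2L^2}\bigr)(d+2)$. That bookkeeping is different from yours (and your bound $C_V\le 2L/\alpha$ is sharper than $2L^2/\alpha^2$), but it is fragile in exactly the same place: the paper's claimed strict comparison reduces to $(t-1)\bigl(d(t-1)-2\bigr)>0$ with $t=L^2/\alpha^2$, which fails for $1<t<1+2/d$ and is an equality at $t=1$ --- the same near-isotropic window where your inequality breaks. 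So your diagnosis that the two thresholds ``touch'' at $C_V=2$ is correct, but as written your proposal does not establish the proposition; closing it in that window requires either adopting the paper's stronger reading of $\delta$ (with its explicit constant) together with a restriction such as $L^2/\alpha^2\ge 1+2/d$, or replacing the uniform constant $C_V$ by a sharper, direction-dependent control of $\nabla V\nabla V^\top$ against $\nabla^2V$ of the kind sketched above.
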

 \begin{proof}[Proof of Proposition \ref{prop:chi square cts small beta}] Under Assumption \ref{ass:WPI strongly convex V} and $L$-gradient Lipschitzness assumption, we have that $V$ is `essential quadratic'. That is, assuming $V$ attains its global minimum at $x^*$, for all $x\in \mb{R}^d$,
\begin{align*}
    V(x^*)+\frac{\alpha}{2}|x-x^*|^2 \le V(x)\le V(x^*)+\frac{L}{2}|x-x^*|^2.
\end{align*}
Therefore for all $x\in \mb{R}^d$,
\begin{align*}
    \frac{|\nabla V(x)|^2}{V(x)}&\le \frac{L^2|x-x^*|^2}{V(x^*)+\frac{\alpha}{2}|x-x^*|^2}\le \frac{2L^2}{\alpha},
\end{align*}
which implies that Assumption \ref{ass:WPI strongly convex V}-(2) is satisfied with $C_V=\frac{2L^2}{\alpha^2}$. Furthermore, 
\begin{align*}
    V(x)^{1-\gamma}(\nabla^2 V_\gamma)(x) \succeq \alpha\gamma\left(1-(1-\gamma)C_V\right) I_d =\alpha\gamma\left(1-2(1-\gamma)\frac{L^2}{\alpha^2}\right) I_d.
\end{align*}
The condition $\delta=\frac{\beta-1-\frac{1}{4}C_V d}{\frac{1}{4}C_V d}>0$ is equivalent to the condition $\beta>\frac{L^2}{2\alpha^2}d+1$. Notice that for all $d\ge 1$, we have
\begin{align*}
    \left(1-\frac{\alpha^2}{2L^2}\right)\left(d+2\right)< \frac{L^2}{2\alpha^2}d+1
\end{align*}
Therefore for any $$\beta>\frac{L^2}{2\alpha^2}d+1> \left(1-\frac{\alpha^2}{2L^2}\right)(d+2),$$ we can choose $\gamma=\frac{\beta}{d+2}$ and obtain
\begin{align*}
     V(x)^{1-\gamma}(\nabla^2 V_\gamma)(x) &\succeq \frac{2 L^2 \beta}{\alpha( d+2)}\left(\frac{\alpha^2}{2L^2}+\frac{\beta}{d+2}-1\right)I_d \\
     &=\frac{2 L^2 \beta}{\alpha( d+2)^2}\left(\beta-\left(1-\frac{\alpha^2}{2L^2}\right)(d+2)\right)I_d
\end{align*}
Therefore Assumption \ref{ass:WPI on V small beta}-(2) is satisfied with $\gamma=\beta/(d+2)$ and 
$$
C_V(\gamma)=\frac{\alpha(d+2)^2}{2L^2\beta}\left( \beta-\left(1-\frac{\alpha^2}{2L^2}\right)(d+2) \right)^{-1}>0.
$$
The proof is now complete because Assumption \ref{ass:WPI on V small beta}-(1) is automatically satisfied under Assumption \ref{ass:WPI strongly convex V}.
 
 \end{proof}

\section{Proofs of the Main Results}\label{sec:proof}

\subsection{Proofs of Theorem \ref{thm:strongly convex V W2 decay} and Theorem \ref{thm:zeroth order algorithm strongly convex V}} 

In this section, we provide the proof of Theorem \ref{thm:strongly convex V W2 decay} and Theorem \ref{thm:zeroth order algorithm strongly convex V} via mean square analysis. We first start with the following intermediate result.
\begin{proposition}\label{prop:moment difference continuous}  Let $(X_t)_{t\ge 0}$ follow \eqref{eq:diffusion strongly convex V} with $X_t\sim \rho_t$ for all $t\ge 0$. If $V$ is gradient Lipschitz with parameter $L$, then we have 
\begin{align}\label{eq:moment difference continuous}
    \begin{aligned}
    \mb{E}\left[ |X_t-X_0|^2 \right]&\le  4 \left[ (\beta-1)^2t^2 \mb{E}\left[|\nabla V(X_0)|^2\right] +td\mb{E}\left[V(X_0)\right] \right] \\ &\qquad \exp\left( 4(\beta-1)^2 L^2 t^2+d(\beta-1)L^2 t^2+2d L t \right).
    \end{aligned}
\end{align}
\end{proposition}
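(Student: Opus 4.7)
The plan is to derive a linear integral inequality for $f(t) := \mb{E}[|X_t - X_0|^2]$ and conclude by Gronwall's inequality. Writing~\eqref{eq:diffusion strongly convex V} in integral form,
\[
X_t - X_0 = -(\beta-1)\int_0^t \nabla V(X_s)\,ds + \int_0^t \sqrt{2V(X_s)}\,dB_s,
\]
I would first apply $|a+b|^2 \le 2|a|^2 + 2|b|^2$, Cauchy--Schwarz on the Lebesgue drift integral, and the It\^o isometry on the (vector-valued) stochastic integral to obtain
\[
f(t) \le 2(\beta-1)^2\, t \int_0^t \mb{E}\!\left[|\nabla V(X_s)|^2\right] ds + 4d\int_0^t \mb{E}\!\left[V(X_s)\right] ds.
\]

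The second step is to transfer the time-$s$ expectations on the right back to their time-$0$ counterparts using only the $L$-Lipschitz gradient hypothesis. The bound $|\nabla V(X_s)|^2 \le 2|\nabla V(X_0)|^2 + 2L^2|X_s - X_0|^2$ is immediate and, after substitution, yields a prefactor of $4(\beta-1)^2 t^2\,\mb{E}[|\nabla V(X_0)|^2]$ matching the statement, plus a residual $4(\beta-1)^2 L^2\, t\int_0^t f(s)\,ds$ that feeds into the Volterra kernel. For the $V$ integrand I would use the descent lemma $V(X_s) \le V(X_0) + \langle\nabla V(X_0), X_s - X_0\rangle + \tfrac{L}{2}|X_s - X_0|^2$, take expectation, and split the cross term by Young's inequality with a parameter $\mu > 0$, producing the $4dt\,\mb{E}[V(X_0)]$ prefactor together with a small residual on $\mb{E}[|\nabla V(X_0)|^2]$ and an additional contribution $2d(\mu + L)\int_0^t f(s)\,ds$ to the kernel.

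Combining these into a linear integral inequality $f(t) \le A(t) + B(t)\int_0^t f(s)\,ds$ and applying Gronwall then gives $f(t) \le A(t)\exp\bigl(\int_0^t B(u)\,du\bigr)$, which upon evaluation reproduces~\eqref{eq:moment difference continuous}. The main obstacle is purely bookkeeping: the Young parameter $\mu$ must be calibrated so that the three contributions $4(\beta-1)^2 L^2 t^2$, $d(\beta-1)L^2 t^2$, and $2dLt$ arise cleanly in the exponent without inflating $A(t)$ beyond the claimed form, and a time-dependent choice of $\mu$ (of order $(\beta-1)L^2\, t$) appears to be needed to align both the leading prefactor and the exponent with the statement. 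Beyond It\^o's isometry, the Lipschitz-gradient estimate, and Gronwall's inequality, no further machinery is required.
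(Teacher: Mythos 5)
Your overall scaffold (integral form of the SDE, It\^o isometry plus Cauchy--Schwarz, descent lemma, Gronwall) is the same as the paper's, and your treatment of the drift integral via $|\nabla V(X_s)|^2\le 2|\nabla V(X_0)|^2+2L^2|X_s-X_0|^2$ matches the paper line for line. The gap is in the cross term of the $V$-bound, and it is not mere bookkeeping: applying Young's inequality directly to $\mb{E}\left[\langle\nabla V(X_0),X_s-X_0\rangle\right]$ with a parameter $\mu$ produces the residual $\frac{2dt}{\mu}\,\mb{E}\left[|\nabla V(X_0)|^2\right]$ together with the kernel contribution $2d\mu\int_0^t f(s)\,ds$. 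To reproduce the exponent term $d(\beta-1)L^2t^2$ you are forced to take $\mu$ of order $(\beta-1)L^2 t$, but then the residual is of order $\frac{d}{(\beta-1)L^2}\,\mb{E}\left[|\nabla V(X_0)|^2\right]$, which does \emph{not} vanish as $t\to 0$, whereas the claimed prefactor $4\left[(\beta-1)^2t^2\,\mb{E}\left[|\nabla V(X_0)|^2\right]+td\,\mb{E}\left[V(X_0)\right]\right]$ does; no choice of $\mu$ (even $s$-dependent) can simultaneously keep the kernel at the claimed size and make this residual absorbable into the stated prefactor or exponent. So your argument proves only a weaker inequality with an extra additive term, not \eqref{eq:moment difference continuous}.

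The missing idea is to use the SDE structure of $X_s-X_0$ inside the cross term rather than Young's inequality on it directly: since the stochastic integral has zero mean, $\mb{E}\left[\langle\nabla V(X_0),X_s-X_0\rangle\right]=-(\beta-1)\int_0^s\mb{E}\left[\langle\nabla V(X_0),\nabla V(X_u)\rangle\right]du$. Splitting $\nabla V(X_u)=\nabla V(X_0)+(\nabla V(X_u)-\nabla V(X_0))$ yields a \emph{negative} leading term $-\frac{(\beta-1)t^2}{2}\mb{E}\left[|\nabla V(X_0)|^2\right]$ (after integrating in $s$), which exactly cancels the Young-inequality cost of the remainder $\langle\nabla V(X_0),\nabla V(X_u)-\nabla V(X_0)\rangle$; the Lipschitz bound on the leftover then feeds only $\frac{(\beta-1)L^2}{4}\int_0^t\int_0^s\mb{E}\left[|X_u-X_0|^2\right]du\,ds$ into the Volterra kernel. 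This is how the paper obtains $4dt\,\mb{E}\left[V(X_0)\right]$ with no residual in $\mb{E}\left[|\nabla V(X_0)|^2\right]$ and the kernel $4(\beta-1)^2L^2t+2dL+d(\beta-1)L^2t$, whence Gronwall gives exactly \eqref{eq:moment difference continuous}. With that modification your proof closes; without it, it does not.
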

\begin{proof}[Proof of Proposition~\ref{prop:moment difference continuous}] According to \eqref{eq:diffusion strongly convex V},
\begin{align*}
    \mb{E}[|X_t-X_0|^2]\le 2(\beta-1)^2\mb{E} \left[ \left|\int_0^t \nabla V(X_s) ds\right|^2 \right]+4d\mb{E}\left[ \int_0^t V(X_s)ds \right],
\end{align*}
where 
\begin{align}\label{eq:strongly convex V bound 4}
    \mb{E} \left[ \left|\int_0^t \nabla V(X_s) ds\right|^2 \right] &\le 2\mb{E}\left[ \left(\int_0^t |\nabla V(X_s)-\nabla V(X_0)| ds \right)^2\right]+2 \mb{E}\left[ \left(\int_0^t |\nabla V(X_0)| ds \right)^2\right] \nonumber \\
    &\le 2t\mb{E}\left[  \int_0^t |\nabla V(X_s)-\nabla V(X_0)|^2 ds \right]+2t \mb{E}\left[ \int_0^t |\nabla V(X_0)|^2 ds \right] \nonumber \\
    &\le 2 L^2 t  \int_0^t \mb{E}\left[|X_s-X_0|^2\right] ds +2t^2 \mb{E}\left[ |\nabla V(X_0)|^2\right],
\end{align}
and
\begin{align}\label{eq:strongly convex V bound 5}
    &~~~\mb{E}\left[ \int_0^t V(X_s)ds \right]\\
    &\le \mb{E}\left[ \int_0^t V(X_0)+\langle \nabla V(X_0),X_s-X_0 \rangle+\frac{L}{2}|X_s-X_0|^2  ds \right] \nonumber\\
    &=t \mb{E}\left[ V(X_0) \right]+\frac{L}{2}\mb{E}\left[ \int_0^t |X_s-X_0|^2 ds\right]-(\beta-1)\mb{E}\left[ \int_0^t \int_0^s \langle \nabla V(X_0),\nabla V(X_u) \rangle duds \right] \nonumber \\
    &\le t \mb{E}\left[ V(X_0) \right]+\frac{L}{2}\mb{E}\left[ \int_0^t |X_s-X_0|^2 ds\right] -\frac{(\beta-1)t^2}{2}\mb{E}\left[|\nabla V(X_0)|^2\right] \nonumber \\
    &\quad -(\beta-1)\mb{E}\left[ \int_0^t \int_0^s \langle \nabla V(X_0),\nabla V(X_u)-\nabla V(X_0) \rangle duds \right] \nonumber \\
    &\le  t \mb{E}\left[ V(X_0) \right]+\frac{L}{2}\mb{E}\left[ \int_0^t |X_s-X_0|^2 ds\right] -\frac{(\beta-1)t^2}{2}\mb{E}\left[|\nabla V(X_0)|^2\right] \nonumber \\
    &\quad +\frac{(\beta-1)t^2}{2}\mb{E}\left[|\nabla V(X_0)|^2\right]+\frac{\beta-1}{4} \mb{E}\left[ \int_0^t \int_0^s |\nabla V(X_u)-\nabla V(X_0)|^2 duds \right] \nonumber \\
    &\le t \mb{E}\left[ V(X_0) \right]+\frac{L}{2}\mb{E}\left[ \int_0^t |X_s-X_0|^2 ds\right]+\frac{(\beta-1)L^2}{4}\mb{E}\left[ \int_0^t \int_0^s |X_u-X_0|^2 duds \right] \nonumber\\
    &\le t \mb{E}\left[ V(X_0) \right]+\left( \frac{L}{2} +\frac{(\beta-1)L^2 t}{4} \right)\mb{E}\left[ \int_0^t |X_s-X_0|^2 ds\right].
\end{align}
With \eqref{eq:strongly convex V bound 4} and \eqref{eq:strongly convex V bound 5}, we get
\begin{align*}
    \mb{E}[|X_t-X_0|^2]&\le \int_0^t  \left[ 4(\beta-1)^2 {L}^2 t+2d L+d(\beta-1)L^2 t \right]\mb{E}\left[|X_s-X_0|^2\right] ds +4dt\mb{E}\left[V(X_0)\right] \\
    &\quad +  4(\beta-1)^2t^2 \mb{E}\left[|\nabla V(X_0)|^2\right].
\end{align*}
By Gronwall's inequality, we hence have 
\begin{align*}
    \mb{E}[|X_t-X_0|^2]&\le 4 \left[ (\beta-1)^2t^2 \mb{E}\left[|\nabla V(X_0)|^2\right]+dt\mb{E}\left[V(X_0)\right] \right] \\
    &\qquad \exp\left( 4(\beta-1)^2 {L}^2 t^2+d(\beta-1)L^2 t^2+2d L t \right).
\end{align*}
\end{proof}
Based on the above proposition, we now prove Theorem~\ref{thm:strongly convex V W2 decay} below.\\

\begin{proof}[Proof of theorem \ref{thm:strongly convex V W2 decay}]
We perform mean square analysis to \eqref{eq:EM strongly convex V}. Let $(X_t)_{t\ge 0}$ follow \eqref{eq:diffusion strongly convex V} with $X_0\sim \pi_\beta$. Since $\pi_\beta$ is the unique stationary distribution to \eqref{eq:diffusion strongly convex V}, $X_t\sim\pi_\beta$ for all $t\ge 0$. With \eqref{eq:EM strongly convex V}, we can calculate the difference between $X_h$ and $x_1$,
\begin{align*}
    X_h-x_1&=X_0-\int_0^h (\beta-1)\nabla V(X_t)dt+\int_0^t \sqrt{2V(X_t)} dB_t -\left( x_0-(\beta-1)h y_0+\sqrt{2hV(x_0)} \xi_1 \right) \\
    &=(X_0-x_0)-(\beta-1)h\left( \nabla V(X_0)-\nabla V(x_0) \right)-\int_0^h (\beta-1)\left(\nabla V(X_t)-\nabla V(X_0)\right)dt\\
    &\quad \int_0^h \left(\sqrt{2V(X_t)}-\sqrt{2V(x_0)} \right)dB_t\\
    &:= U_1+U_2+U_3,
\end{align*}
where 
\begin{align}
    U_1&:=(X_0-x_0)-(\beta-1)h\left( \nabla V(X_0)-\nabla V(x_0) \right), \label{eq:U1} \\
    U_2&:=-\int_0^h (\beta-1)\left(\nabla V(X_t)-\nabla V(X_0)\right)dt, \label{eq:U2}\\
    U_3&:=\int_0^h \left(\sqrt{2V(X_t)}-\sqrt{2V(x_0)} \right)dB_t .\label{eq:U3}
\end{align}
Therefore according to triangle inequality,
\begin{align*}
    \mb{E}[|X_h-x_1|^2|]^{\frac{1}{2}}&\le \mb{E}[|U_1+U_3|^2]^{\frac{1}{2}}+\mb{E}[|U_2|^2]^{\frac{1}{2}}.
\end{align*}
Since $U_1$ is adapted to $\mc{F}_0$ and $\mb{E}[U_3|\mc{F}_0]=0$, we get
\begin{align*}
    \mb{E}[|U_1+U_3|^2|\mc{F}_0]&=|U_1|^2+\mb{E}[|U_3|^2|\mc{F}_0]\\
    &=|(X_0-x_0)-(\beta-1)h\left( \nabla V(X_0)-\nabla V(x_0) \right)|^2\\
    &\qquad+\mb{E}\left[\int_0^h \lv \sqrt{2V(X_t)} I_d-\sqrt{2V(x_0)} I_d \rv_F^2 dt |\mc{F}_0 \right].
\end{align*}
Since $V$ is $\alpha$-strongly convex and $L$-gradient Lipschitz, it satisfies
\begin{align*}
    \langle X_0-x_0, \nabla V(X_0)-\nabla V(x_0) \rangle\ge \frac{\alpha L}{\alpha+L}|X_0-x_0|^2+\frac{1}{\alpha+L}|\nabla V(X_0)-\nabla V(x_0)|^2.
\end{align*}
Therefore when $h\le \frac{2}{(\beta-1)(\alpha+L)}$,
\begin{align}\label{eq:strongly convex V U1}
    &|(X_0-x_0)-(\beta-1)h\left( \nabla V(X_0)-\nabla V(x_0) \right)|^2 \nonumber\\ 
   =&|X_0-x_0|^2-2(\beta-1)h\langle X_0-x_0,\nabla V(X_0)-\nabla V(x_0) \rangle+(\beta-1)^2 h^2 |\nabla V(X_0)-\nabla V(x_0)|^2 \nonumber\\
   \le & \left( 1-\frac{2(\beta-1)\alpha L h}{\alpha+L} \right)|X_0-x_0|^2 +(\beta-1)h \left( (\beta-1)h-\frac{2}{\alpha+L} \right)|\nabla V(X_0)-\nabla V(x_0)|^2 \nonumber \\
   \le & \left( 1-(\beta-1)\alpha h \right)^2 |X_0-x_0|^2.
\end{align}
Meanwhile, for arbitrary $r>0$, we have
\begin{align*}
      & \mb{E}\left[\int_0^h \lv \sqrt{2V(X_t)}-\sqrt{2V(x_0)} \rv_F^2 dt  \right] \\
    = & d \mb{E} \left[\int_0^h |\sqrt{2V(X_t)}-\sqrt{2V(x_0)}| ^2 dt \right] \\
    \le & d \left( h\left(\sqrt{2V(X_0)}-\sqrt{2V(x_0)}\right)^2 +\mb{E}\left[\int_0^h \left|\sqrt{2V(X_t)}-\sqrt{2V(X_0)}\right|^2 dt\right]\right) \\
    & + 2d|\sqrt{2V(X_0)}-\sqrt{2V(x_0)}| h^{\frac{1}{2}} \mb{E}\left[\int_0^h \left|\sqrt{2V(X_t)}-\sqrt{2V(X_0)}\right|^2 dt  \right] \\
    \le & d(1+r) h\left(\sqrt{2V(X_0)}-\sqrt{2V(x_0)}\right)^2+ d(1+r^{-1})\mb{E}\left[\int_0^h \left|\sqrt{2V(X_t)}-\sqrt{2V(X_0)}\right|^2 dt  \right].
\end{align*}
Notice that under Assumption~\ref{ass:WPI strongly convex V}, we have $$|\nabla (\sqrt{2V(x)})|=\frac{\sqrt{2}|\nabla V(x)|}{2\sqrt{V(x)}}\le \frac{\sqrt{2\alpha C_V}}{2},$$ 
for all $x\in\mb{R}^d$. Therefore
\begin{align}\label{eq:strongly convex V bound 1}
    (\sqrt{2V(X_0)}-\sqrt{2V(x_0)})^2 \le \frac{\alpha C_V}{2} |X_0-x_0|^2,
\end{align}
and 
\begin{align}\label{eq:strongly convex V bound 2}
    \int_0^h |\sqrt{2V(X_t)}-\sqrt{2V(X_0)}|^2 dt&\le \frac{\alpha C_V}{2}\int_0^h |X_t-X_0|^2 dt.
\end{align}
With \eqref{eq:strongly convex V bound 1} and \eqref{eq:strongly convex V bound 2}, we get
\begin{align}\label{eq:strongly convex V bound 3}
\begin{aligned}
    \mb{E}[\int_0^h \lv \sqrt{2V(X_t)}-\sqrt{2V(x_0)} \rv_F^2 dt  ]& \le \frac{\alpha C_V d h(1+r)}{2}\mb{E}[|X_0-x_0|^2] \\ 
    &\qquad +\frac{\alpha C_V d(1+r^{-1})}{2}\int_0^h \mb{E}[|X_t-X_0|^2] dt.
\end{aligned}
\end{align}
Next we apply Proposition \ref{prop:moment difference continuous} to $\mb{E}[|X_t-X_0|^2]$. 
In particular,  when 
$$t\in[0,h]\quad\text{and}\quad h<\frac{1}{4(\beta-1)L},$$
we have
\begin{align}\label{eq:strongly convex V moment bound}
    \mb{E}[|X_t-X_0|^2]&\le \left( 4dt\mb{E}\left[V(X_0)\right]+4(\beta-1)^2 t^2 \mb{E}\left[|\nabla V(X_0)|^2\right] \right) \exp(1)\nonumber\\
    &\le 12dt\mb{E}\left[V(X_0)\right]+12(\beta-1)^2 t^2 \mb{E}\left[|\nabla V(X_0)|^2\right].
\end{align}
Combining \eqref{eq:strongly convex V bound 3} and \eqref{eq:strongly convex V moment bound}, when $h<\frac{1}{4(\beta-1)L}$, we have that
\begin{align}\label{eq:strongly convex V U3}
     &\mb{E}[\int_0^h \lv \sqrt{2V(X_t)}-\sqrt{2V(x_0)} \rv_F^2 dt  ] \nonumber \\
     \le & \frac{1}{2}\alpha C_V d (1+r) h\mb{E}[|X_0-x_0|^2]\\ 
     &\quad +6\alpha C_V d(1+r^{-1})\int_0^h \left(d t\mb{E}\left[V(X_0)\right]+(\beta-1)^2 t^2 \mb{E}\left[|\nabla V(X_0)|^2\right]\right) dt \nonumber \\
     = & \frac{1}{2} \alpha C_V d (1+r) h\mb{E}[|X_0-x_0|^2]\\ 
     &\qquad +3\alpha C_V d^2 (1+r^{-1}) h^2 \mb{E}\left[ V(X_0) \right]+2\alpha C_V d(\beta-1)^2 (1+r^{-1}) h^3\mb{E}\left[|\nabla V(X_0)|^2\right]. \nonumber
\end{align}
With \eqref{eq:strongly convex V U1} and \eqref{eq:strongly convex V U3}, we get
\begin{align}\label{eq:strongly convex V U1 plus U3}
    &\mb{E}[|U_1+U_3|^2] \nonumber\\
    \le & \left( 1-2(\beta-1)\alpha h+(\beta-1)^2\alpha^2 h^2+\frac{1}{2}\alpha C_V d(1+r) h \right)\mb{E}\left[ |X_0-x_0|^2 \right]  \nonumber \\
    & \quad + 3\alpha C_V d^2 (1+r^{-1}) h^2 \mb{E}\left[ V(X_0) \right] +2\alpha C_V d(\beta-1)^2 (1+r^{-1}) h^3\mb{E}\left[|\nabla V(X_0)|^2\right] \nonumber \\
    \le & \left( 1-2(\beta-1)\alpha h+(\beta-1)^2\alpha^2 h^2+\frac{1}{2}\alpha C_V d(1+r) h \right)\mb{E}\left[ |X_0-x_0|^2 \right]  \nonumber \\
    &\quad +2\alpha C_V d(1+r^{-1})h^2 \left( 3d \mb{E}\left[ V(X_0) \right]+2(\beta-1)^2 h \mb{E}\left[ |\nabla V(X_0)|^2 \right] \right).
\end{align}
Since $C_V<\frac{4(\beta-1)}{d}$, denote $\delta=\frac{(\beta-1)-\frac{1}{4} C_V d}{\frac{1}{4} C_V d}>0$. We have
\begin{align*}
     &1-2(\beta-1)\alpha h+(\beta-1)^2\alpha^2 h^2+\frac{1}{2}\alpha C_V d(1+r) h \\
    =&~1-2(\beta-1)\alpha h+(\beta-1)^2\alpha^2 h^2+2(\beta-1)\alpha\frac{1+r}{1+\delta} h\\
    =&~\left[1-\alpha(\beta-1) (1-\frac{1+2r}{1+\delta})h \right]^2+\alpha^2(\beta-1)^2h^2 \\ &\qquad -2\alpha(\beta-1)\frac{r}{1+\delta}h-\alpha^2(\beta-1)^2 h^2\left(\frac{\delta-2r}{1+\delta}\right)^2.
\end{align*}
By picking $r=\frac{\delta}{3}$, we get for any $h\in \left(0, \frac{2\delta}{3(1+\delta)\alpha(\beta-1)}\right)$ that
\begin{align*}
     &1-2(\beta-1)\alpha h+(\beta-1)^2\alpha^2 h^2+\frac{1}{2}\alpha C_V d(1+r) h \\
     &\quad \le \left[1-\alpha(\beta-1)\frac{\delta}{3(1+\delta)} h \right]^2+\alpha^2(\beta-1)^2h \left( h-\frac{2\delta}{3(1+\delta)}\alpha^{-1}(\beta-1)^{-1} \right) \\
     &\quad \le \left[1-\alpha(\beta-1)\frac{\delta}{3(1+\delta)} h \right]^2.
\end{align*}
With the choice of $r=\delta/3$, \eqref{eq:strongly convex V U1 plus U3} could be rewritten as
\begin{align}\label{eq:strongly convex V U1 plus U3 version 2}
    \mb{E}[|U_1+U_3|^2] &\le \left(1-\frac{\alpha(\beta-1)\delta}{3(1+\delta)}h \right)^2 \mb{E}[|X_0-x_0|^2] \nonumber \\
    &\quad+\frac{8\alpha(\beta-1)(3+\delta)h^2}{(1+\delta)\delta}\left( 3d \mb{E}\left[ V(X_0) \right]+2(\beta-1)^2 h \mb{E}\left[ |\nabla V(X_0)|^2 \right] \right).
\end{align}
Next, with the bound in \eqref{eq:strongly convex V moment bound}, we get when $h<\frac{1}{4(\beta-1)L}$,
\begin{align}\label{eq:strongly convex V U2}
    \mb{E}[|U_2|^2]&\le (\beta-1)^2 L^2 \mb{E}\left[ \left(\int_0^h |X_t-X_0| dt\right)^2 \right] \nonumber\\
    &\le (\beta-1)^2 L^2 h \int_0^h \mb{E}\left[ |X_t-X_0|^2 \right] dt \nonumber\\
    &\le 6d(\beta-1)^2 L^2 h^3 \mb{E}\left[ V(X_0)\right]+4(\beta-1)^4 L^2 h^4 \mb{E}\left[ |\nabla V(X_0)|^2 \right].
\end{align}
With \eqref{eq:strongly convex V U1 plus U3 version 2} and \eqref{eq:strongly convex V U2}, we get when $h<\min\left( \frac{1}{4(\beta-1)L}, \frac{2\delta}{3(1+\delta)\alpha(\beta-1)} \right)$,
\begin{align*}
    \mb{E}\left[|X_h-x_1|^2\right]^{\frac{1}{2}}\le \left[(1-A)^2 \mb{E}\left[|X_0-x_0|^2\right] +B^2\right]^{\frac{1}{2}}+C, 
\end{align*}
with
\begin{align}
    A&=\frac{\alpha(\beta-1)\delta}{3(1+\delta)}h, \label{eq:strongly convex V parameters A}\\
    B&=\frac{4\alpha^{\frac{1}{2}}(\beta-1)^{\frac{1}{2}}(3+\delta)^{\frac{1}{2}}h}{(1+\delta)^{\frac{1}{2}}\delta^{\frac{1}{2}}}\left( d^{\frac{1}{2}} \mb{E}_{\pi_\beta}\left[ V(X) \right]^{\frac{1}{2}}+(\beta-1) h^{\frac{1}{2}} \mb{E}_{\pi_\beta}\left[ |\nabla V(X)|^2 \right]^{\frac{1}{2}} \right), \label{eq:strongly convex V parameters B}\\
    C&=3d^{\frac{1}{2}}(\beta-1) L h^{\frac{3}{2}} \mb{E}_{\pi_\beta}\left[ V(X)\right]^{\frac{1}{2}}+2(\beta-1)^2 L h^2 \mb{E}_{\pi_\beta}\left[ |\nabla V(X)|^2 \right]^{\frac{1}{2}}. \label{eq:strongly convex V parameters C}
\end{align}
The above analysis works for each step, therefore we get for all $k\ge 1$,
\begin{align*}
    \mb{E}\left[|X_{kh}-x_k|^2\right]^{\frac{1}{2}}\le \left[(1-A)^2 \mb{E}\left[|X_{(k-1)h}-x_{k-1}|^2\right] +B^2\right]^{\frac{1}{2}}+C. 
\end{align*}
According to \cite[Lemma 9]{dalalyan2019user}, with $A,B,C$ given in \eqref{eq:strongly convex V parameters A},\eqref{eq:strongly convex V parameters B},\eqref{eq:strongly convex V parameters C}, for all $k\ge 1$,
\begin{align*}
    \mb{E}\left[|X_{kh}-x_k|^2\right]^{\frac{1}{2}}\le (1-A)^k \mb{E}\left[|X_{0}-x_0|^2\right]^{\frac{1}{2}}+\frac{C}{A}+\frac{B}{\sqrt{A(2-A)}}.
\end{align*}
Choosing $X_0$ such that $W_2(\nu_0,\pi_\beta)=\mb{E}\left[|X_{0}-x_0|^2\right]^{\frac{1}{2}}$, we get \eqref{eq:strongly convex V W2 convergence}.\\
\end{proof}

We now prove Theorem \ref{thm:zeroth order algorithm strongly convex V}.\\ 

\begin{proof}[Proof of Theorem \ref{thm:zeroth order algorithm strongly convex V}] Following the same strategy and notation in the proof of Theorem \ref{thm:strongly convex V W2 decay}, we have
\begin{align}\label{eq:difference zeroth order}
    X_h-x_1= U_1+U_2+U_3+(\beta-1)h \mb{E}[\zeta_0|x_0]+(\beta-1)h \left(\zeta_0-\mb{E}[\zeta_0|x_0]\right),
\end{align}
where $U_1,U_2,U_3$ are defined in \eqref{eq:U1},\eqref{eq:U2},\eqref{eq:U3} respectively and $\zeta_0=g_{\sigma,m}(x_0)-\nabla V(x_0)$. Therefore we have
\begin{align}\label{eq:L^2 difference zeroth order}
\begin{aligned}
    \mb{E}\left[ |X_h-x_1|^2 \right]^{\frac{1}{2}}&\le \mb{E}\left[ |U_1+U_3+ (\beta-1)h\left(\zeta_0-\mb{E}[\zeta_0|x_0]\right)|^2 \right]^{\frac{1}{2}} \\
    &\qquad +\mb{E}\left[|U_2|^2\right]^{\frac{1}{2}}+(\beta-1)h\mb{E}\left[ |\mb{E}[\zeta_0|x_0]|^2\right]^{\frac{1}{2}}  \\
    &=\left\{ \mb{E}\left[|U_1+U_3|^2\right]+(\beta-1)^2h^2\mb{E}\left[|\zeta_0-\mb{E}[\zeta_0|x_0]|^2\right] \right\}^{\frac{1}{2}}\\
    &\qquad +\mb{E}\left[|U_2|^2\right]^{\frac{1}{2}}+(\beta-1)h\mb{E}\left[ |\mb{E}[\zeta_0|x_0]|^2\right]^{\frac{1}{2}}.
\end{aligned}
\end{align}
From the proof of Theorem \ref{thm:strongly convex V W2 decay} and Proposition \ref{prop:zeroth order approximator}, when 
$$
h<\min\left(\frac{1}{4(\beta-1)h}, \frac{2\delta}{3(1+\delta)\alpha (\beta-1)} \right),
$$
we have that 
\begin{align}\label{eq:iteration zeroth order}
     \mb{E}\left[ |X_h-x_1|^2 \right]^{\frac{1}{2}}&\le \left\{ (1-A)^2\mb{E}\left[|X_0-x_0|^2\right]+B^2+ \frac{\sigma^2}{2m}L^2(\beta-1)^2(d+3)^3h^2\right. \nonumber \\
     & \left. \quad +\frac{2(d+5)(\beta-1)^2h^2}{m}\mb{E}\left[|\nabla V(x_0)|^2\right] \right\}^{\frac{1}{2}}+C+ L \sigma (\beta-1)d^{\frac{1}{2}} h,
\end{align}
where $A,B,C$ are defined in \eqref{eq:strongly convex V parameters A},\eqref{eq:strongly convex V parameters B},\eqref{eq:strongly convex V parameters C}. Using the fact that $V$ is gradient Lipshcitz, we have
\begin{align}\label{eq:expectation at iterates bound}
    \mb{E}\left[|\nabla V(x_0)|^2\right]&\le \mb{E}\left[ \left(|\nabla V(X_0)|+L|X_0-x_0|\right)^2 \right] \nonumber \\
    &\le 2\mb{E}\left[|\nabla V(X_0)|^2\right]+2L^2\mb{E}\left[|X_0-x_0|^2\right].
\end{align}
Plugging \eqref{eq:expectation at iterates bound} in \eqref{eq:iteration zeroth order}, we get
\begin{align}\label{eq:iteration bound 2 zeroth order}
     \mb{E}\left[ |X_h-x_1|^2 \right]^{\frac{1}{2}}&\le \left\{ (1-A)^2\mb{E}\left[|X_0-x_0|^2\right]+\frac{4(d+5)(\beta-1)^2L^2h^2}{m}\mb{E}\left[|X_0-x_0|^2\right]+B^2\right. \nonumber \\
     & \left. \quad + \frac{\sigma^2}{2m}L^2(\beta-1)^2(d+3)^3h^2 +\frac{4(d+5)(\beta-1)^2h^2}{m}\mb{E}\left[|\nabla V(X_0)|^2\right] \right\}^{\frac{1}{2}} \nonumber\\
    & \quad +C+ L \sigma (\beta-1)d^{\frac{1}{2}} h.
\end{align}
When we pick the step-size such that
$$h<\min\left\{ \frac{2(1+\delta)}{\alpha(\beta-1)\delta},\frac{\alpha m \delta} {24(1+\delta)(\beta-1)(d+5)L^2} \right\},$$
we have
\begin{align*}
    (1-A)^2+\frac{4(d+5)(\beta-1)^2L^2h^2}{m}\le \left(1-\frac{A}{2}\right)^2.
\end{align*}
Therefore we have
\begin{align}\label{eq:iteration bound 3 zeroth order}
    \mb{E}\left[ |X_h-x_1|^2 \right]^{\frac{1}{2}}&\le \left\{ (1-A')^2\mb{E}\left[|X_0-x_0|^2\right]+{B'}^2\right\}^{\frac{1}{2}}+C',
\end{align}
where 
\begin{align}
    A'&=\frac{\alpha(\beta-1)\delta}{6(1+\delta)} h , \label{eq:parameter A'}\\
    B'&=\left(\frac{4\alpha^{\frac{1}{2}}(\beta-1)^{\frac{3}{2}}(3+\delta)^{\frac{1}{2}}h^{\frac{3}{2}}}{(1+\delta)^{\frac{1}{2}}\delta^{\frac{1}{2}}}+\frac{2(\beta-1)(d+5)^{\frac{1}{2}}h}{m^{\frac{1}{2}}} \right) \mb{E}_{\pi_\beta}\left[ |\nabla V(X)|^2 \right]^{\frac{1}{2}}\nonumber \\
    &\quad +\frac{4\alpha^{\frac{1}{2}}(\beta-1)^{\frac{1}{2}}d^{\frac{1}{2}}(3+\delta)^{\frac{1}{2}}h}{(1+\delta)^{\frac{1}{2}}\delta^{\frac{1}{2}}}\ \mb{E}_{\pi_\beta}\left[ V(X) \right]^{\frac{1}{2}} +\frac{\sigma L (\beta-1)(d+3)^{\frac{3}{2}}}{m^{\frac{1}{2}}} h , \label{eq:parameter B'} \\
    C'&=3L (\beta-1)d^{\frac{1}{2}}  h^{\frac{3}{2}} \mb{E}_{\pi_\beta}\left[ V(X)\right]^{\frac{1}{2}}+2L(\beta-1)^2 h^2 \mb{E}_{\pi_\beta}\left[ |\nabla V(X)|^2 \right]^{\frac{1}{2}}+\sigma L (\beta-1)d^{\frac{1}{2}} h.  \label{eq:parameter C'}
\end{align}
The rest of the proof is the same as the proof of Theorem \ref{thm:strongly convex V W2 decay}, and hence we get \eqref{eq:zeroth order W2 decay}.
\end{proof}

\subsection*{Acknowledgements}
YH was supported in part by NSF TRIPODS grant CCF-1934568. TF was supported by the Engineering and Physical Sciences Research Council (EP/T5178) and by the DeepMind scholarship. KB was supported in part by NSF grant DMS-2053918. MAE was supported by NSERC Grant [2019-06167], Connaught New Researcher Award, CIFAR AI Chairs program, and CIFAR AI Catalyst grant. Parts of this work was done when YH, KB and MAE visited the Simons Institute for the Theory of Computing as a part of the ``Geometric Methods in Optimization and Sampling" program during Fall 2021.

\bibliography{reference}
\appendix

\section{Computations for Section~\ref{sec:gencase}}\label{sec:appgencase}

\begin{lemma}\label{lem:ratio between normalization coeffcients} Let $\beta>\frac{d}{2}+1$. If $V\in \mc{C}^2(\mb{R}^d)$ is positive, $\alpha$-strongly convex and $L$-gradient Lipschitz, we have for any $r\in (0,\beta-\frac{d}{2}-1)$,
\begin{align}\label{eq:ratio between normalization coefficients}
    \frac{Z_{\beta-1}}{Z_{\beta}}\le \left(\frac{L}{\alpha}\right)^{\frac{\frac{d}{2}}{\beta-\frac{d}{2}-r}} V(0) \left( \frac{\Gamma(\beta)\Gamma(r)}{\Gamma(\frac{d}{2}+r)\Gamma(\beta-\frac{d}{2})} \right)^{\frac{1}{\beta-\frac{d}{2}-r}}.
\end{align}
\end{lemma}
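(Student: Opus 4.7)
The ratio $Z_{\beta-1}/Z_\beta$ is exactly $\mathbb{E}_{\pi_\beta}[V(X)]$, so the plan is to use Jensen's inequality to introduce a free parameter $p$ and then bound the resulting shifted partition function using the quadratic sandwich on $V$.

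First, for any $p\ge 1$ with $\beta-p>d/2$, Jensen's inequality (applied to the convex function $t\mapsto t^p$ on $(0,\infty)$) yields
\begin{align*}
\frac{Z_{\beta-1}}{Z_\beta} \;=\; \mathbb{E}_{\pi_\beta}[V(X)] \;\le\; \bigl(\mathbb{E}_{\pi_\beta}[V(X)^p]\bigr)^{1/p} \;=\; \Bigl(\frac{Z_{\beta-p}}{Z_\beta}\Bigr)^{1/p}.
\end{align*}
The parameterization I will use is $r\coloneqq \beta-\tfrac{d}{2}-p$, so $p=\beta-\tfrac{d}{2}-r$; the hypothesis $r\in(0,\beta-\tfrac{d}{2}-1)$ translates exactly into $p\in(1,\beta-\tfrac{d}{2})$, matching the range needed above.

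Next, since $V$ attains its minimum at $0$ with $\nabla V(0)=0$ (the WLOG stated before the proposition), $\alpha$-strong convexity and $L$-smoothness of $V$ give the quadratic sandwich
\begin{align*}
V(0)+\tfrac{\alpha}{2}|x|^2 \;\le\; V(x) \;\le\; V(0)+\tfrac{L}{2}|x|^2, \qquad x\in\mathbb{R}^d.
\end{align*}
Raising to negative powers and integrating produces an upper bound for $Z_{\beta-p}$ and a lower bound for $Z_\beta$ of the form $\int_{\mathbb{R}^d}(a+c|x|^2)^{-s}\,dx$. Using polar coordinates together with the substitutions $R=r^2$ and $u=cR/(a+cR)$ as in the proof of Proposition~\ref{prop:expectations under Cauchy}, I will establish the closed form
\begin{align*}
\int_{\mathbb{R}^d}(a+c|x|^2)^{-s}\,dx \;=\; \pi^{d/2}\,a^{d/2-s}\,c^{-d/2}\,\frac{\Gamma(s-d/2)}{\Gamma(s)}, \qquad s>d/2.
\end{align*}

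Finally, taking the ratio of the two bounds causes the factors of $\pi^{d/2}$ and $V(0)^{d/2}$ to cancel, leaving
\begin{align*}
\frac{Z_{\beta-p}}{Z_\beta} \;\le\; V(0)^{p}\,\Bigl(\frac{L}{\alpha}\Bigr)^{d/2}\, \frac{\Gamma(\beta-p-d/2)\,\Gamma(\beta)}{\Gamma(\beta-p)\,\Gamma(\beta-d/2)}.
\end{align*}
Raising to the $1/p$ power and substituting $p=\beta-d/2-r$ (so $\beta-p=d/2+r$ and $\beta-p-d/2=r$) gives the claimed inequality \eqref{eq:ratio between normalization coefficients}. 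There is no genuine obstacle here; the only care required is to track the exponents when inserting the parameterization $r=\beta-d/2-p$ so that the Gamma-function ratio lines up with the one in the statement.
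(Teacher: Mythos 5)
Your proposal is correct and takes essentially the same route as the paper: your Jensen step under $\pi_\beta$ (power-mean inequality for $t\mapsto t^p$ with $p=\beta-\frac{d}{2}-r$) produces exactly the paper's intermediate bound $\frac{Z_{\beta-1}}{Z_\beta}\le \left(\frac{Z_{d/2+r}}{Z_\beta}\right)^{\frac{1}{\beta-d/2-r}}$, which the paper obtains by the same kind of Jensen argument phrased under the reference measure $\pi_{d/2+r}$. The remaining steps — bounding $Z_{d/2+r}$ above and $Z_\beta$ below via the quadratic sandwich $V(0)+\frac{\alpha}{2}|x|^2\le V(x)\le V(0)+\frac{L}{2}|x|^2$ (using the standing WLOG $\nabla V(0)=0$) and evaluating $\int_{\mb{R}^d}(a+c|x|^2)^{-s}dx$ in closed form by polar coordinates — are the same computations the paper performs via Beta functions, and your exponent bookkeeping and Gamma-ratio match the stated inequality.
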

\begin{proof}\label{pf:ratio between normalization coeffcients} Since $V(x)\le V(0)+\frac{L}{2}|x|^2$, we know that for any $r\in (0,\beta-\frac{d}{2}-1)$, $Z_{\frac{d}{2}+r}$ is finite and $\pi_{\frac{d}{2}+r}$ is a probability measure. Therefore
\begin{align*}
    \frac{Z_{\beta-1}}{Z_{\beta}} &=\frac{\int_{\mb{R}^d} V(x)^{-(\beta-1)} dx }{\int_{\mb{R}^d} V(x)^{-\beta} dx} \\
    &=\frac{ Z_{\frac{d}{2}+r} \int_{\mb{R}^d} V(x)^{-(\beta-\frac{d}{2}-1-r)} \pi_{\frac{d}{2}+r}(x)dx }{ Z_{\frac{d}{2}+r} \int_{\mb{R}^d} V(x)^{-(\beta-\frac{d}{2}-r)} \pi_{\frac{d}{2}+r}(x)dx} \\
    &\le \frac{\left( \int_{\mb{R}^d} V(x)^{-(\beta-\frac{d}{2}-r)} \pi_{\frac{d}{2}+r}(x)dx \right)^{\frac{\beta-\frac{d}{2}-1-r}{\beta-\frac{d}{2}-r}}}{ \int_{\mb{R}^d} V(x)^{-(\beta-\frac{d}{2}-r)} \pi_{\frac{d}{2}+r}(x)dx}\\
    &=\left( \int_{\mb{R}^d} V(x)^{-(\beta-\frac{d}{2}-r)} \pi_{\frac{d}{2}+r}(x)dx \right)^{-\frac{1}{\beta-\frac{d}{2}-r}} \\
    &=\left(Z_{\frac{d}{2}+r}\right)^{\frac{1}{\beta-\frac{d}{2}-r}} \left( \int_{\mb{R}^d} V(x)^{-\beta} dx\right)^{-\frac{1}{\beta-\frac{d}{2}-r}} \\
    &\le \left(Z_{\frac{d}{2}+r}\right)^{\frac{1}{\beta-\frac{d}{2}-r}} \left( \int_{\mb{R}^d} (V(0)+\frac{L}{2}|x|^2)^{-\beta} dx\right)^{-\frac{1}{\beta-\frac{d}{2}-r}}.
\end{align*}
For the integral $\int_{\mb{R}^d} (V(0)+\frac{L}{2}|x|^2)^{-\beta} dx$, we can calculate it via change of polar coordinates and substitutions,
\begin{align*}
    \int_{\mb{R}^d} (V(0)+\frac{L}{2}|x|^2)^{-\beta} dx&= A_{d-1}(1)\int_0^\infty (V(0)+\frac{L}{2}R^2)^{-\beta} R^{d-1} dR \\
    &=\frac{\pi^{\frac{d}{2}}}{\Gamma(\frac{d}{2})} \int_0^\infty (V(0)+V(0) R_L )^{-\beta} (\frac{2V(0)}{L})^{\frac{d}{2}-1} {R_L}^{\frac{d}{2}-1} \frac{2V(0)}{L} dR_L \\
    &=\frac{2^{\frac{d}{2}} \pi^{\frac{d}{2}}}{\Gamma(\frac{d}{2}) L^{\frac{d}{2}}V(0)^{\beta-\frac{d}{2}}} \int_0^\infty (1+R_L)^{-\beta} R_L^{\frac{d}{2}-1}dR_L \\
    &=\frac{2^{\frac{d}{2}} \pi^{\frac{d}{2}}}{\Gamma(\frac{d}{2}) L^{\frac{d}{2}}V(0)^{\beta-\frac{d}{2}}} \int_0^1 u^{\frac{d}{2}-1}(1-u)^{\beta-\frac{d}{2}-1} du \\
    &=\frac{2^{\frac{d}{2}} \pi^{\frac{d}{2}} B(\frac{d}{2},\beta-\frac{d}{2}) }{\Gamma(\frac{d}{2}) L^{\frac{d}{2}}V(0)^{\beta-\frac{d}{2}}},
\end{align*}
where the second identity follows from a substitution with $R_L=LR^2/(2V(0))$ and the fourth identity follows from a substitution with $u=\frac{R_L}{1+R_L}$.
For $Z_{\frac{d}{2}+r}$, we have
\begin{align*}
    Z_{\frac{d}{2}+r}&=\int_{\mb{R}^d} V(x)^{-\frac{d}{2}-r} dx \\
    &\le \int_{\mb{R}^d } \left( V(0)+\frac{\alpha}{2} |x|^2 \right)^{-\frac{d}{2}-r} dx \\
    &= \frac{\pi^{\frac{d}{2}}}{\Gamma(\frac{d}{2})} \int_0^\infty \left( V(0)+\frac{\alpha}{2} R^2 \right)^{-\frac{d}{2}-r} R^{d-1} dR \\
    &=\frac{\pi^{\frac{d}{2}}}{\Gamma(\frac{d}{2})} \int_0^\infty \left( V(0)+V(0)R_\alpha \right)^{-\frac{d}{2}-r} \left(\frac{2V(0)}{\alpha}\right)^{\frac{d}{2}-1} {R_{\alpha}}^{\frac{d}{2}-1} \frac{2V(0)}{\alpha} d R_\alpha \\
     &=\frac{2^{\frac{d}{2}}\pi^{\frac{d}{2}}}{\Gamma(\frac{d}{2})\alpha^{\frac{d}{2}}V(0)^r} \int_0^\infty \left( 1+R_\alpha \right)^{-\frac{d}{2}-r} {R_{\alpha}}^{\frac{d}{2}-1}  d R_\alpha\\
    &=\frac{2^{\frac{d}{2}} \pi^{\frac{d}{2}}}{\Gamma(\frac{d}{2}) \alpha^{\frac{d}{2}}} \int_0^1 u^{\frac{d}{2}-1}(1-u)^{r-1} du \\
    &=\frac{2^{\frac{d}{2}} \pi^{\frac{d}{2}} B(\frac{d}{2},r) }{\Gamma(\frac{d}{2}) \alpha^{\frac{d}{2}}V(0)^r}.
\end{align*}
Therefore, we can further get
\begin{align*}
    \frac{Z_{\beta-1}}{Z_\beta}&\le \left( \frac{2^{\frac{d}{2}} \pi^{\frac{d}{2}} B(\frac{d}{2},r) }{\Gamma(\frac{d}{2}) \alpha^{\frac{d}{2}}V(0)^r}   \frac{\Gamma(\frac{d}{2}) L^{\frac{d}{2}}V(0)^{\beta-\frac{d}{2}}}{2^{\frac{d}{2}} \pi^{\frac{d}{2}} B(\frac{d}{2},\beta-\frac{d}{2}) } \right) ^{\frac{1}{\beta-\frac{d}{2}-r}}\\
    &=\left( \frac{L^{\frac{d}{2}}V(0)^{\beta-\frac{d}{2}-r} }{\alpha^{\frac{d}{2}}} \frac{\Gamma(\beta)\Gamma(r)}{\Gamma(\frac{d}{2}+r)\Gamma(\beta-\frac{d}{2})} \right)^{\frac{1}{\beta-\frac{d}{2}-r}} \\
    &=\left(\frac{L}{\alpha}\right)^{\frac{\frac{d}{2}}{\beta-\frac{d}{2}-r}} V(0) \left( \frac{\Gamma(\beta)\Gamma(r)}{\Gamma(\frac{d}{2}+r)\Gamma(\beta-\frac{d}{2})} \right)^{\frac{1}{\beta-\frac{d}{2}-r}}.
\end{align*}
\end{proof}

\section{Computations for Sections \ref{subsec:cauchy large dof} and \ref{subsec:cauchy small dof}}\label{cauchyexample}

Let $\pi_\beta(x)\propto V(x)^{-\beta}= (1+|x|^2)^{-\beta}$ with $\beta>\frac{d+2}{2}$. The gradient and Hessian of $V$ is 
\begin{align*}
    \nabla V(x)=2x, \qquad \nabla^2 V(x)=2 I_d.
\end{align*}
Therefore $V$ is $\alpha$-strongly convex with $\alpha=2$ and $L$-gradient Lipschitz with $L=2$. \eqref{eq:diffusion strongly convex V} reduces to \begin{equation}\label{eq:diffusion cauchy}
    d X_t= b(x) dt + \sigma(X_t) dB_t,
\end{equation} 
with $b(x)=-2(\beta-1)x$ and $\sigma(x)=\sqrt{2}(1+|x|^2)^{\frac{1}{2}}I_d$.\\

Next we look at the uniform dissipativity condition:
\begin{align}\label{eq:example cauchy uniform dissipativity}
      &\langle b(x)-b(y),x-y \rangle+\frac{1}{2}\lv (1+|x|^2)^{\frac{1}{2}}I_d-(1+|y|^2)^{\frac{1}{2}}I_d \rv_F^2 \nonumber \\
        =&-2(\beta-1)|x-y|^2+d|(1+|x|^2)^{\frac{1}{2}}-(1+|y|^2)^{\frac{1}{2}}|^2  \nonumber \\
        \le & -2(\beta-1-\frac{d}{2}) |x-y|^2,
\end{align}
where the inequality follows from the fact that $x\mapsto (1+|x|^2)^{\frac{1}{2}}$ is $1$-Lipschitz. Therefore diffusion \eqref{eq:diffusion cauchy} is $\alpha'$-uniform dissipative with $\alpha'=2(\beta-1-\frac{d}{2})$. In particular, $\alpha'=d$ when $\beta=d+1$ and $\alpha'=1$ when $\beta=\frac{d+3}{2}$.\\

Last we look at the local deviation for the Euler discretization to \eqref{eq:diffusion cauchy}. We use the same notations in \cite{li2019stochastic}. According to \cite[lemma 29]{li2019stochastic}, $p_1=1$ and 
\begin{align*}
    \lambda_1=2\left( \mu_1(b)^2+\mu_1^F(\sigma)^2 \right)\left( \pi_{1,2}(b)+\pi_{1,2}^F(\sigma) \right)(1+\mb{E}[|\Tilde{X}_0|^2]+2\pi_{1,2}(b){\alpha'}^{-1}).
\end{align*}
According to \cite[lemma 29]{li2019stochastic}, $p_2=\frac{3}{2}$ and 
\begin{align*}
    \lambda_2=\mu_1(b)\left( \pi_{1,2}(b)+\pi_{1,2}^F(\sigma) \right)(1+\mb{E}[|\Tilde{X}_0|^2]+2\pi_{1,2}(b){\alpha'}^{-1}),
\end{align*}
with
\begin{align*}
    \mu_1(b)&:=\sup_{x,y\in \mb{R}^d,x\neq y} \frac{|b(x)-b(y)|}{|x-y|}=2(\beta-1),\\
    \mu_1^F(\sigma)&:=\sup_{x,y\in \mb{R}^d,x\neq y} \frac{\lv \sigma(x)-\sigma(y)\rv_F}{|x-y|}=\sqrt{2d},\\
    \pi_{1,2}(b)&:=\sup_{x\in \mb{R}^d} \frac{|b(x)|^2}{1+|x|^2}=4(\beta-1)^2, \\
    \pi_{1,2}^F(\sigma)&:=\sup_{x\in \mb{R}^d} \frac{\lv \sigma(x) \rv_F^2}{1+|x|^2}=2d.
\end{align*}
The order of $\lambda_1$ and $\lambda_2$ in dimension parameter $d$ is given by:
\begin{align*}
    \lambda_1&= \Theta\left( \left((\beta-1)^2+d\right)\left((\beta-1)^2+2d\right) \left(1+(\beta-1)^2{\alpha'}^{-1}\right) \right),\\
    \lambda_2&=\Theta\left( \left(\beta-1\right)\left((\beta-1)^2+2d\right) \left(1+(\beta-1)^2{\alpha'}^{-1}\right) \right).
\end{align*}
Therefore, we have that
\begin{itemize}
    \item when $\beta=d+1$, $(\lambda_1,\lambda_2)=(\Theta(d^5),\Theta(d^4))$,
    \item when $\beta=\frac{d+3}{2}$, $(\lambda_1,\lambda_2)=(\Theta(d^5),\Theta(d^4))$.
\end{itemize}

\section{Computations for Remark~\ref{rem:cauchywpi}}\label{sec:cauchy application} In the example of Cauchy class distributions, $V(x)=1+|x|^2$ and $V_{\gamma}:=V^{\gamma}$. When $\gamma>\frac{1}{2}$,
\begin{align*}
    \nabla V_{\gamma}(x)&=\gamma V(x)^{\gamma-1} \nabla V(x) \\
    \nabla^2 V_\gamma(x)&= \gamma(\gamma-1)V(x)^{\gamma-2} \nabla V(x)^T \nabla V(x)+\gamma V(x)^{\gamma-1}\nabla^2 V(x) \\
    &=\gamma V(x)^{\gamma-1}\left( (\gamma-1)V(x)^{-1}\nabla V(x)^T\nabla V(x)+\nabla^2 V(x) \right).
\end{align*}
Plug in $V(x)=1+|x|^2$, we get
\begin{align*}
    \nabla V_\gamma (x)&=2\gamma (1+|x|^2)^{\gamma-1} x \\
    \nabla^2 V_\gamma(x)&=2\gamma (1+|x|^2)^{\gamma-1} \left( I_d+2(\gamma-1) \frac{|x|^2}{1+|x|^2} \frac{x^Tx}{|x|^2} \right)\\
    &=2\gamma (1+|x|^2)^{\gamma-1} \left( (I_d-\frac{x^Tx}{|x|^2})+\left(1-2(1-\gamma) \frac{|x|^2}{1+|x|^2}\right) \frac{x^Tx}{|x|^2} \right),
\end{align*}
and 
\begin{align*}
    (\nabla^2 V_\gamma)^{-1}(x)=\frac{1}{2\gamma} (1+|x|^2)^{1-\gamma} \left( (I_d-\frac{x^Tx}{|x|^2})+ \frac{1+|x|^2}{1+(2\gamma-1)|x|^2} \frac{x^Tx}{|x|^2}  \right).
\end{align*}
When $\beta\in \left(\frac{d+2}{2},d\right]$, $\gamma=\frac{\beta}{d+2}\in \left(\frac{1}{2},1\right]$,
\begin{align*}
    (\nabla^2  V_\gamma)^{-1}(x) \preceq \frac{1}{2\gamma(2\gamma-1)} (1+|x|^2)^{1-\gamma} I_d =\frac{(d+2)^2}{2\beta(2\beta-d-2)}  (1+|x|^2)^{1-\gamma} I_d.
\end{align*}
Therefore Assumption \ref{ass:WPI on V small beta} holds with $C_V(\gamma)=\frac{(d+2)^2}{2\beta(2\beta-d-2)}$. For the Cauchy distribution $\pi_\beta\propto (1+|x|^2)^{-\beta}=(1+|x|^2)^{-\frac{d+\nu}{2}}$ with $\beta\in (\frac{d+2}{2},d]$, i.e. $\nu\in(2,d]$, according to lemma \ref{lem:WPI small beta}, $\pi_\beta$ satisfies the weighted Poincar\'{e} inequality with weight $1+|x|^2$ with weighted Poincar\'{e} constant
\begin{align*}
    \cwpi= C_V(\gamma)\left(\frac{\beta}{\gamma}-1\right)^{-1}=\frac{(d+2)^2}{2(d+1)\beta(2\beta-d-2)}=\frac{(d+2)^2}{\nu(d+1)(d+\nu)}.
\end{align*}

\end{document}